\newtheorem{theorem}{Theorem}[section]
\newtheorem{prop}[theorem]{Proposition}
\newtheorem{lemma}[theorem]{Lemma}
\newtheorem{definition}[theorem]{Definition}
\newtheorem{corollary}[theorem]{Corollary}
\newtheorem{remark}[theorem]{Remark}
\newtheorem*{prop*}{Proposition}
\newcommand{\Ta}{\mathbb{T}_\alpha}
\newcommand{\Tb}{\mathbb{T}_\beta}
\newcommand{\Tg}{\mathbb{T}_\gamma}
\newcommand{\Td}{\mathbb{T}_\delta}
\newcommand{\s}{\mathfrak{s}}
\renewcommand{\tt}{\mathfrak{t}}
\newcommand{\lk}{\mbox{lk}}
\newcommand{\ds}{\displaystyle}
\newcommand{\btau}{{\bar{\tau}}}
\newcommand{\spinc}{{\mbox{spin$^c$} }}
\newcommand{\zee}{\mathbb{Z}}
\newcommand{\cue}{\mathbb{Q}}
\newcommand{\cee}{\mathbb{C}}
\newcommand{\K}{{\mathcal K}}
\newcommand{\M}{{\mathcal M}}
\newcommand{\F}{{\mathbb F}}
\newcommand{\XX}{{\mathbb X}}
\newcommand{\hfhat}{\widehat{HF}}
\newcommand{\cfhat}{\widehat{CF}}
\newcommand{\ahat}{\widehat{A}}
\newcommand{\Xhat}{\widehat{X}}
\newcommand{\sdgs}{\s_{\rm\sc DGS}}
\newcommand{\jdgs}{J_{\rm\sc DGS}}
\newcommand{\xidgs}{\xi_{\rm\sc DGS}}
\newcommand{\jhat}{\widehat{J}}
\newcommand{\Akq}{A_{\lfloor\frac{k}{q}\rfloor}}
\newcommand{\dvert}{\partial^{\mathrm{vert}}}
\newcommand{\dhorz}{\partial^{\mathrm{horz}}}
\newcommand{\ts}{\textstyle}
\newcommand{\ul}{\underline}
\newcommand{\tSigma}{\widetilde{\Sigma}}
\newcommand{\tW}{\widetilde{W}}
\newcommand{\tlambda}{{\tilde{\lambda}}}
\newcommand{\cptwobar}{\overline{\cee P}^2}
\DeclareMathOperator{\x}{\rm\bf x}
\DeclareMathOperator{\tx}{\rm\bf\tilde{x}}
\DeclareMathOperator{\y}{\rm\bf y}
\DeclareMathOperator{\U}{\rm\bf u}
\DeclareMathOperator{\V}{\rm\bf v}
\DeclareMathOperator{\WW}{\rm\bf w}
\DeclareMathOperator{\tb}{tb}
\DeclareMathOperator{\rot}{rot}
\DeclareMathOperator{\Sym}{Sym}
\DeclareMathOperator{\Spinc}{{\rm Spin}^c}
\DeclareMathOperator{\srobs}{\s_{\rm\sc ROBS}}
\newcommand{\tc}{\tilde{c}}
\newcommand{\MM}{\mbox{\bf M}}
\renewcommand{\AA}{\mathbb A}
\newcommand{\BB}{\mathbb B}
\renewcommand{\P}{{\mathcal P}}
\newcommand{\aalpha}{\mbox{\boldmath $\alpha$}}
\newcommand{\bbeta}{\mbox{\boldmath $\beta$}}
\newcommand{\ggamma}{\mbox{\boldmath $\gamma$}}
\newcommand{\taalpha}{\mbox{\boldmath $\tilde\alpha$}}
\newcommand{\tbbeta}{\mbox{\boldmath $\tilde\beta$}}
\newcommand{\tggamma}{\mbox{\boldmath $\tilde\gamma$}}
\newcommand{\ddelta}{\mbox{\boldmath $\delta$}}
\begin{document}

\title[Naturality under positive contact surgery]{Naturality of Heegaard Floer invariants under positive rational contact surgery}
\author{Thomas E. Mark}
\address{Department of Mathematics, University of Virginia}
\email{tmark@virginia.edu	}
\author{B\"ulent Tosun}
\address{Department of Mathematics, University of Alabama}
\email{btosun@ua.edu	}

\begin{abstract} For a nullhomologous Legendrian knot in a closed contact 3-manifold $Y$ we consider a contact structure obtained by positive rational contact surgery. We prove that in this situation the Heegaard Floer contact invariant of $Y$ is mapped by a surgery cobordism to the contact invariant of the result of contact surgery, and we characterize the \spinc structure on the cobordism that induces the relevant map. As a consequence we determine necessary and sufficient conditions for the nonvanishing of the contact invariant after rational surgery on a Legendrian knot in the standard 3-sphere, generalizing previous results of Lisca-Stipsicz and Golla. In fact our methods allow direct calculation of the contact invariant in terms of the rational surgery mapping cone of Ozsv\'ath and Szab\'o. The proof involves a construction called reducible open book surgery, which reduces in special cases to the capping-off construction studied by Baldwin.
\end{abstract}

\maketitle

\section{Introduction}

One of the fundamental outstanding problems in 3-dimensional contact topology is the determination of which (closed, oriented) 3-manifolds admit tight contact structures. The question has been resolved in many cases, e.g., by work of Eliashberg-Thurston \cite{ET} and Gabai \cite{gabai}, any 3-manifold with nontrivial second homology admits a tight contact structure. Lisca and Stipsicz \cite{LStightseifert} determined exactly which Seifert 3-manifolds have tight contact structures. However, the situation for hyperbolic rational homology spheres is still largely open. 

Any closed oriented 3-manifold admits contact structures, and any contact structure $\xi$ can be described by contact surgery on a Legendrian link in $S^3$ \cite{dinggeiges2004}. If such a description for $\xi$ can be obtained for which all contact surgery coefficients are negative, then $\xi$ is Stein fillable and hence tight. It is therefore natural to consider the extent to which contact surgery with positive coefficients results in tight contact structures; it is this question that motivates this paper.

As a proxy for tightness of a contact structure we consider the nonvanishing of the Heegaard Floer contact invariant $c(\xi)$, which is a strictly stronger condition. Our main result is a naturality property for this invariant under positive rational contact surgery, generalizing a well-known result in the case of contact $+1$ surgery \cite{LS2004}. To state it, we recall a bit of the formal structure of Heegaard Floer invariants. 

For a closed oriented 3-manifold $Y$, the Heegaard Floer homology of $Y$ is an abelian group that is the homology of a finitely-generated chain complex $\widehat{CF}(Y)$ of free abelian groups. Forming the tensor product with the field $\F = \zee/2\zee$ we get a chain complex of $\F$-vector spaces whose homology is the Heegaard Floer homology $\hfhat(Y) = \hfhat(Y;\F)$ with coefficients in $\F$, which will be our exclusive concern. There is always a decomposition of $\hfhat(Y)$ as a direct sum of Heegaard Floer groups $\hfhat(Y,\tt)$ associated to \spinc structures $\tt$ on $Y$. Moreover, for a connected sum of 3-manifolds, we have a K\"unneth decomposition $\hfhat(Y'\# Y'', \tt'\#\tt'') \cong \hfhat(Y',\tt')\otimes \hfhat(Y'',\tt'')$. To any cobordism $W: Y_1\to Y_2$ between closed oriented 3-manifolds, i.e., a compact oriented 4-manifold with boundary $\partial W = -Y_1 \cup Y_2$, equipped with a \spinc structure $\s$, is associated a homomorphism $F_{W,\s}: \hfhat(Y_1,\tt_1) \to \hfhat(Y_2,\tt_2)$ where $\tt_i = \s|_{Y_i}$.

If $\xi$ is a contact structure on $Y$, the Heegaard Floer {\it contact invariant} of $\xi$ is an element $c(\xi) \in \hfhat(-Y, \tt_\xi)$, where $-Y$ is $Y$ equipped with the other orientation and $\tt_\xi$ is the \spinc structure associated to the contact structure $\xi$. It is a basic result of Ozsv\'ath and Szab\'o, to whom the theory of Heegaard Floer homology and the construction of $c(\xi)$ is due, that $c(\xi) = 0$ if $\xi$ is overtwisted \cite{OScontact}.

Now suppose $K\subset Y$ is an oriented nullhomologous knot and fix a Seifert surface $S$ for $K$. For an integer $n$, there is a cobordism from $Y$ to the result $Y_n(K)$ of $n$-framed surgery along $K$, consisting of a single 2-handle attached with framing $n$. More generally let $p/q$ be a rational number with $p,q$ relatively prime and $q>0$, and write $p = mq - r$ for integers $m$ and $r$ with $0\leq r < q$. Then there is a ``rational surgery cobordism'' $W: Y\# {-L(q,r)}\to Y_{p/q}(K)$, where we use the convention that $L(q,r)$ is the lens space obtained by $-q/r$ surgery on the unknot in $S^3$. Indeed, we form $Y\# {-L(q,r)}$ by performing $q/r$ surgery on a meridian of $K\subset Y$, and then $W$ is given by attaching a 2-handle along the image of $K$ after this surgery with an appropriate choice of framing (in terms of a Kirby picture, one would call it framing $m$). Observe that $H_2(W,Y\# {-L(q,r)};\zee) \cong \zee$, generated by the homology class of the core of the 2-handle which we write as $[F]$. We orient the core $F$ such that the induced orientation on $K$ is opposite to that induced by the Seifert surface $S$. We also have 
\[
H_2(W;\zee) / H_2(Y\# {-L(q,r)};\zee) \cong \zee,
\]
generated by a class $[\widetilde{S}]\in H_2(W;\zee)$ whose sign is fixed by the requirement that $[\widetilde{S}]\mapsto q[F]$ under inclusion. There is some ambiguity in the characterization of $[\widetilde{S}]$ by these conditions if $H_2(Y;\zee)\neq 0$; one can specify the class $[\widetilde{S}]$ uniquely by using the Seifert surface $S$ to construct a surface closing off---in $Y\#{-L(q,r)}$---$q$ parallel copies of the core $F$, but for the statement and applications of our theorem we will not need this.

Equip $Y$ with a contact structure $\xi$, and assume $\K$ is a Legendrian representative of the smooth knot $K$. We have two ``classical invariants'' of $\K$, the Thurston-Bennequin number $\tb(\K)$ and the rotation number $\rot(\K)$, where the latter depends on the orientation of $K$ and in general on the choice of $S$. The dependence on $S$ is eliminated if we suppose that the Euler class of $\xi$, or equivalently the first Chern class of $\tt_\xi$, is torsion. For a rational number $x/y \neq 0$, there is a notion of contact $x/y$ surgery along $\K$, which provides a contact structure $\xi_{x/y}$ on the 3-manifold obtained by surgery on $K$ with coefficient $p/q = \tb(\K) + x/y$. Note that $q = y$ and $p = x + y\,\tb(\K)$. When $x \neq \pm 1$ one can obtain several different contact structures by rational contact surgery, depending on certain choices; our results concern one particular such contact structure, written $\xi^-_{x/y}$ and corresponding to choosing ``all negative stabilizations'' in the construction. For more details see Section \ref{surgerysec}, in particular Theorem \ref{DGSalgorithm} and following text.

Recall that the contact invariant $c(\xi)\in\hfhat(-Y)$ satisfies a certain naturality property under contact $+1$ surgeries, in the sense that if $W: Y\to Y_{{\rm tb}(\K)+1}(K)$ is the surgery cobordism then there exists a \spinc structure $\s$ on $-W$ such that the induced homomorphism $\hfhat(-Y)\to\hfhat(-Y_{{\rm tb}(\K)+1}(K))$ carries $c(\xi)$ to $c(\xi_{+1})$. Our first main result generalizes this property to any positive rational contact surgery.

\begin{theorem}\label{naturalitythmintro} Let $\K$ be an oriented nullhomologous Legendrian knot in a contact 3-manifold $(Y,\xi)$, and let $0< \frac{x}{y}\in \cue$ be a contact surgery coefficient corresponding to smooth surgery coefficient $\frac{p}{q} = \tb(\K) + \frac{x}{y}$. Let $W: Y\#{-L(q,r)}\to Y_{p/q}(K)$ be the corresponding rational surgery cobordism, where $p = mq -r$ as above, and consider the contact structure $\xi_{x/y}^-$ on $Y_{p/q}(K)$.
\begin{enumerate}
\item There exists a \spinc structure $\s$ on $W$ and a generator $\tilde{c}\in \hfhat(L(q,r))$ such that the homomorphism 
\[
F_{-W,\s}: \hfhat(-Y\# L(q,r))\to \hfhat(-Y_{p/q}(K))
\]
 induced by $W$ with its orientation reversed satisfies
\[
F_{-W,\s}(c(\xi)\otimes \tilde{c}) = c(\xi^-_{x/y}).
\]
\item Assume also that $\xi$ and $\xi_{x/y}^-$ have torsion first Chern class. Then $\s$ has the property that
\[
\pm\langle c_1(\s), [\widetilde{S}]\rangle = p + (\rot(\K) - \tb(\K))q - 1.
\]
\end{enumerate}
\end{theorem}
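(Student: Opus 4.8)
The plan is to reduce the rational-coefficient statement to the integral case via the standard description of contact $x/y$ surgery as a sequence of contact $\pm1$ surgeries, and then to combine the known naturality of the contact invariant under contact $+1$ surgery \cite{LS2004} with bookkeeping on the surgery cobordism. First I would recall the algorithm of Ding--Geiges: contact $x/y$ surgery on $\K$ (with $x/y \geq 1$, so the continued fraction expansion of $x/y - 1$ or of $-\frac{y}{x}$ in the appropriate form has all terms producing one contact $+1$ surgery and a chain of contact $-1$ surgeries on Legendrian pushoffs of unknots) is realized by a single contact $+1$ surgery on $\K$ followed by a sequence of contact $-1$ (Stein) surgeries on a Legendrian chain link. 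The ``all negative stabilizations'' hypothesis pins down the rotation numbers of the stabilized pushoffs in this chain, which is exactly the data we will need in part (2). The composite of the corresponding 2-handle cobordisms is a cobordism $W': Y \to Y_{p/q}(K)$; the contact $-1$ surgeries on the unknot chain are precisely what builds the $-L(q,r)$ summand, and reversing those handle attachments (turning the cobordism around) identifies $W'$, after the connect-sum move on the meridian, with the rational surgery cobordism $W: Y\#{-L(q,r)}\to Y_{p/q}(K)$ of the introduction. The generator $\tc\in\hfhat(L(q,r))$ is the contact invariant of the unique tight contact structure on $-L(q,r)$ picked out by this construction (a Stein-fillable structure), which is a generator because $\hfhat$ of a lens space in each \spinc structure is $\F$.

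For part (1): by the naturality of $c$ under a single contact $+1$ surgery, the first 2-handle map carries $c(\xi)$ to the contact invariant of the intermediate contact manifold; by the functoriality of the contact invariant under Stein (contact $-1$) cobordisms — i.e., the maps induced by the subcritical/critical handle attachments of a Stein cobordism send $c$ to $c$ up to sign, in the appropriate \spinc structure — the remaining handle maps carry this through to $c(\xi^-_{x/y})$. Composing, and using the K\"unneth formula together with the identification of the reversed unknot-chain cobordism with (part of) $-W$, gives a \spinc structure $\s$ on $W$ with $F_{-W,\s}(c(\xi)\otimes\tc) = c(\xi^-_{x/y})$. The $\F$-coefficient convention conveniently removes all sign ambiguities. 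The main point of care is checking that the composition of the individual \spinc structures on the handle cobordisms assembles to a genuine \spinc structure on the glued cobordism $W$ and that the connected-sum-with-meridian move does not disturb this; this is where I expect the bulk of the technical work.

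For part (2): one computes $\langle c_1(\s),[\widetilde{S}]\rangle$ by evaluating $c_1$ of the assembled \spinc structure on the surface $\widetilde S$, which is built from the Seifert surface $S$ together with the cocores/cores of the $q$ parallel push-offs of $F$ in the unknot chain. On each handle cobordism the value of $c_1$ on the relevant disk is governed by the rotation number of the attaching Legendrian (this is the standard formula $\langle c_1(\s), [\widehat{\Sigma}]\rangle = \rot$ for Stein handle attachments, together with the analogous computation for the $+1$ surgery handle). Summing these contributions over the chain and the main handle, with the rotation numbers fixed by the ``all negative stabilizations'' condition, yields a linear expression in $\tb(\K)$, $\rot(\K)$, $p$, and $q$; a direct continued-fraction bookkeeping argument shows this equals $p + (\rot(\K) - \tb(\K))q - 1$ up to overall sign. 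The hard part here is organizing the continued-fraction sum of rotation numbers cleanly — in practice this is the kind of computation that is cleanest when packaged through the rational surgery mapping cone, and I would expect the authors to either do it directly by induction on the length of the chain or to defer it to the mapping-cone description promised in the abstract.
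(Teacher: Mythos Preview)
Your proposal has a genuine gap in part (1), and it is precisely the obstacle that forces the paper to introduce new machinery rather than proceed as you suggest.

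The problem is the direction of naturality for the contact invariant under Legendrian (contact $-1$) surgery. If $W_j: Y^{(j-1)}\to Y^{(j)}$ is a Stein 2-handle attachment, the associated map on Floer homology that respects the contact class is the one induced by the \emph{turned-around} cobordism, $\hfhat(-Y^{(j)})\to\hfhat(-Y^{(j-1)})$, carrying $c(\xi^{(j)})$ to $c(\xi^{(j-1)})$. In your scheme, after the single contact $+1$ surgery on $\K$ gives a forward map $\hfhat(-Y)\to\hfhat(-Y^{(0)})$ with $c(\xi)\mapsto c(\xi^{(0)})$, the subsequent $-1$ surgeries on $K_1,\ldots,K_n$ produce maps going \emph{backward}, from $\hfhat(-Y_{p/q}(K))$ toward $\hfhat(-Y^{(0)})$. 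These do not compose to a map $\hfhat(-Y)\to\hfhat(-Y_{p/q}(K))$, so your argument does not assemble. Relatedly, your identification of $\tilde c$ as the contact invariant of a Stein-fillable structure on $-L(q,r)$ is incorrect: the paper remarks explicitly that $\tilde c$ is in general \emph{not} a contact class (it lies in a $\spinc$ structure shifted from $\s_{\xi_{Stein}}$ by a meridian class).

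The paper circumvents the direction problem by constructing a \emph{single} 2-handle cobordism directly from $Y_{p/q}(K)$ to $Y\#(-L(q,r))$, via what it calls \emph{reducible open book surgery}: one builds an open book for $(Y_{p/q}(K),\xi^-_{x/y})$ from the DGS link, observes that the once-stabilized Legendrian $K^-$ sits on a page as a separating curve fixed by the monodromy, and performs page-framed surgery along it. A generalization of Baldwin's capping-off theorem (proved by a holomorphic-triangle argument requiring that the $(S'',\phi'')$ side be ``HKM-strong'') then shows the reversed cobordism carries $c(\xi)\otimes\tilde c$ to $c(\xi^-_{x/y})$, where $\tilde c$ is the HKM generator with a \emph{nonstandard} basepoint.

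For part (2), your idea of evaluating $c_1$ via rotation numbers on the DGS link is essentially how the paper computes $\langle c_1(\s_{\mathrm{DGS}}),[\widetilde S]\rangle$. But this is a \emph{different} $\spinc$ structure from the one $\s_{\mathrm{ROBS}}$ produced in part (1), and an additional argument is needed to match them: the paper compares $c_1^2$ of the two via the degree-shift formula, which in turn requires computing $\deg(\tilde c)$ --- and this is where the fact that $\tilde c\neq c(\xi_{Stein})$ becomes essential (the discrepancy is exactly $1-\tfrac{1}{q}$ in the $d$-invariants). Your outline omits this step entirely.
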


Note that part (2) of the theorem characterizes the \spinc structure $\s$ uniquely up to conjugation under the given hypotheses. A version of this theorem for positive integer contact surgeries is implicit in \cite{LS2011}.

A couple of remarks are in order about the class $\tilde{c}\in \hfhat(L(q,r))$. First, this class depends only on $q$ and $r$, not on $Y$, $K$, or $\xi$. Second, though $\tilde{c}$ is homogeneous with respect to the decomposition of $\hfhat(L(q,r))$ along \spinc structures, it is {\it not}, as might be supposed, always the contact invariant of a contact structure on $-L(q,r)$. It may more naturally be interpreted as an invariant associated to a certain transverse knot in the latter manifold (the knot $O_{q/r}$ of Section \ref{ratsurgsec}), though we neither need nor pursue this interpretation. For our purposes here, it suffices to observe that for each \spinc structure $\tt$ on $L(q,r)$, we have $\hfhat(L(q,r),\tt) = \F$ and therefore the generator $\tilde{c}$ is specified uniquely by its associated \spinc structure.

Theorem \ref{naturalitythmintro} is proved by use of a construction we call ``reducible open book surgery'', which can be seen as a generalization of the operation on open book decompositions known as ``capping off'' a boundary component. From this point of view the theorem above follows from a generalization of a theorem of Baldwin on the behavior of the contact invariant under capping off \cite{baldwincap}. Our generalization is given as Theorem \ref{naturalitythm} below.

We now specialize to Legendrian knots in the standard contact 3-sphere. Theorem \ref{naturalitythmintro} allows us to determine exactly when a positive contact surgery yields a contact structure with nonvanishing Heegaard Floer invariant. The answer depends on certain other Heegaard-Floer-theoretic invariants for the knot $K\subset S^3$, {\it viz.}:
\begin{itemize}
\item The integer $\tau(K)$ discovered by Ozsv\'ath and Szab\'o \cite{FourBall} and by Rasmussen \cite{RasThesis}. If $-K$ denotes the mirror of $K$, it is known that $\tau(-K) = -\tau(K)$; note that $\tau(K)$ is independent of the orientation of $K$. It was proved by Plamenevskaya \cite{Olga2004} that in the standard contact structure, if $\K$ is an oriented Legendrian knot isotopic to $K$ then
\begin{equation}
\tb(\K) + |\rot(\K)| \leq 2\tau(K) - 1.
\label{plamresult}
\end{equation}
\item The concordance invariant $\epsilon(K)\in \{-1,0,1\}$ introduced by Hom \cite{Hom1}. It was shown in \cite{Hom1} that if $K$ is slice then $\epsilon(K) = 0$, and if $\epsilon(K) = 0$ then $\tau(K) = 0$.
\end{itemize}

\begin{theorem}\label{cinvariantthm} Let $\K$ be an oriented Legendrian knot in $S^3$ and $0< \frac{x}{y}\in \cue$. Write $(Y_{x/y},\xi^-_{x/y})$ for the contact manifold obtained by contact $\frac{x}{y}$ surgery on $\K$, in which all stabilizations are chosen to be negative. Let $c(\xi_{x/y}^-) \in \hfhat(-Y_{x/y})$ be the Ozsv\'ath-Szab\'o contact invariant of $\xi_{x/y}^-$, and write $K$ for the smooth knot type underlying $\K$. Finally, let $\frac{p}{q} = \frac{x}{y} + \tb(\K)$ be the corresponding smooth surgery coefficient.

\begin{enumerate}
\item If $\tb(\K) - \rot(\K) < 2\tau(K) - 1$, then $c(\xi^-_{x/y}) = 0$.
\item Suppose $\tb(\K) - \rot(\K) = 2\tau(K) - 1$. 
\begin{enumerate}
\item If $\epsilon(K) = 1$, then $c(\xi_{x/y}^-) \neq 0$ if and only if $\frac{p}{q} > 2\tau(K) - 1$.
\item If $\epsilon(K) = 0$, then $c(\xi_{x/y}^-) \neq 0$ if and only if $\frac{p}{q} \geq 2\tau(K)$.
\item If $\epsilon(K) = -1$, then $c(\xi_{x/y}^-) = 0$.
\end{enumerate}
\end{enumerate}
\end{theorem}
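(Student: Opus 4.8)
The plan is to deduce this from Theorem~\ref{naturalitythmintro} together with \OS's rational surgery mapping cone formula.

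\emph{Step 1 (reduction to a cobordism map).} Apply Theorem~\ref{naturalitythmintro} with $Y=S^3$ and $\xi$ the standard tight contact structure. Since $\hfhat(-S^3)=\F$ is generated by $c(\xi)$, the class $c(\xi)\otimes\tilde c$ is simply the distinguished generator of $\hfhat(-S^3\#L(q,r))\cong\hfhat(L(q,r))$ in a fixed \spinc structure, and part~(1) gives
\[
c(\xi^-_{x/y})=F_{-W,\s}(\tilde c)\in\hfhat(-S^3_{p/q}(K)).
\]
The hypothesis that $c_1(\xi^-_{x/y})$ be torsion is automatic for surgery on a knot in $S^3$, so part~(2) pins down $\s$ up to conjugation through the pairing $\pm\langle c_1(\s),[\widetilde S]\rangle=p+(\rot(\K)-\tb(\K))q-1$. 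Plamenevskaya's bound $\tb(\K)+|\rot(\K)|\le 2\tau(K)-1$ shows $\tb(\K)-\rot(\K)\le 2\tau(K)-1$ always, so items (1) and (2) of the theorem really do exhaust all cases, with strict inequality in case~(1).

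\emph{Step 2 (locating $c(\xi^-_{x/y})$ in the mapping cone).} Using $-S^3_{p/q}(K)\cong S^3_{-p/q}(mK)$ (equivalently, dualizing \OS's cone so as to compute $\hfhat$ of the orientation-reversed surgery), the group $\hfhat(-S^3_{p/q}(K))$ is the homology of a finite mapping cone $\widehat{\mathbb X}$ built from truncated complexes $\ahat_s$, $\widehat B_s=\cfhat(S^3)$ and truncation maps $\widehat v_s,\widehat h_s$ read off from $CFK^\infty(K)$. By \OnS's identification of two-handle cobordism maps with the structural maps of this cone, $F_{-W,\s}$ is, on homology, the natural map from the $\widehat B$-part of $\widehat{\mathbb X}$, restricted to the single summand $\widehat B_{i_0}$ selected by $\s$, with the index $i_0$ determined by the Chern class pairing of Theorem~\ref{naturalitythmintro}(2). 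Since in the exact triangle of the cone the kernel of $H_*(\bigoplus\widehat B_s)\to H_*(\widehat{\mathbb X})$ is the image of the differential $\bigoplus\ahat_s\to\bigoplus\widehat B_s$, we get: $c(\xi^-_{x/y})=0$ if and only if the generator of $H_*(\widehat B_{i_0})$ lies in that image, which---because $\widehat B_{i_0}$ sits at an extreme of the relevant truncation---reduces to surjectivity on homology of one truncation map ($\widehat v$ or $\widehat h$) at index $i_0$.

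\emph{Step 3 (evaluating surjectivity via $CFK^\infty$, and case analysis).} Such a truncation map is onto on homology exactly when the corresponding Rasmussen/Ni--Wu integer $V_s$ (resp.\ $H_s$) vanishes; one has $V_s=0\iff s\ge\nu^+$, with $\nu^+=\tau(K)$ when $\epsilon(K)\ge 0$ and $\nu^+$ governed more delicately by Hom's analysis of $CFK^\infty$ near the Alexander grading $\tau(K)$ when the borderline index is in play. The mechanism is that when $\tb(\K)-\rot(\K)=2\tau(K)-1$, substituting $q=y$, $p=x+y\,\tb(\K)$ and $\rot(\K)=\tb(\K)-2\tau(K)+1$ collapses the pairing of part~(2) to $p-(2\tau(K)-1)q-1$, placing $i_0$ \emph{at} the borderline value, shifted precisely by the discrepancy between $\tfrac pq$ and $2\tau(K)-1$. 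Unwinding the resulting condition on $V_{i_0}$ then yields: $\tfrac pq>2\tau(K)-1$ when $\epsilon(K)=1$; the relaxed threshold $\tfrac pq\ge 2\tau(K)=0$ when $\epsilon(K)=0$ (where $CFK^\infty(K)$ has trivial ``staircase'' part, so both borderline maps $\widehat v$ and $\widehat h$ are onto, buying one extra coefficient); and $c(\xi^-_{x/y})=0$ for every admissible $\tfrac xy$ when $\epsilon(K)=-1$, since then the relevant map is never onto at $i_0$. In case~(1), where $\tb(\K)-\rot(\K)<2\tau(K)-1$, the same substitution forces $i_0$ strictly into the range with $V_{i_0}=0$ for all $\tfrac xy\ge 1$, so $c(\xi^-_{x/y})=0$.

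\emph{Main obstacle.} The crux is the bookkeeping in Steps~2--3: matching the topologically defined index $i_0$ (from the Chern class pairing of Theorem~\ref{naturalitythmintro}(2)) to the combinatorial index in the mapping cone, while tracking the orientation reversal $-S^3_{p/q}(K)=S^3_{-p/q}(mK)$---which flips both $\tau$ and $\epsilon$ and interchanges $\widehat v$ with $\widehat h$---so that the final inequality comes out with the correct sense. The second delicate point is the borderline case $\tb(\K)-\rot(\K)=2\tau(K)-1$, where the answer genuinely depends on the structure of $CFK^\infty(K)$ near grading $\tau(K)$ and not merely on $\tau(K)$; in particular the uniform vanishing when $\epsilon(K)=-1$ must be extracted from Hom's description of the complex rather than from $\tau(K)$ alone.
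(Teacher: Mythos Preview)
Your overall strategy matches the paper's: reduce via Theorem~\ref{naturalitythmintro} to the image of a single $(k,B)$ in the rational-surgery cone $\XX_{-p/q}(-K)$, then analyze when that generator survives. The reduction (your Steps~1--2 through the identification of $F_{-W,\s}$ with the inclusion of $(k,B)$) is exactly Corollary~\ref{cor1}.

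There is, however, a genuine gap in your Step~2 conclusion and throughout Step~3. You assert that vanishing of $c(\xi^-_{x/y})$ ``reduces to surjectivity on homology of one truncation map ($\widehat v$ or $\widehat h$) at index $i_0$,'' on the grounds that $\widehat B_{i_0}$ sits at an extreme of the truncation. Neither claim is correct. The summand $(k,B)$ is typically in the interior of the cone, receiving \emph{both} a $v$-map from $A_{\lfloor k/q\rfloor}$ and an $h$-map from $A_{\lfloor (k+p)/q\rfloor}$; and any element of $A_{\lfloor k/q\rfloor}$ mapping onto the generator of $(k,B)$ via $v$ also maps via $h$ into a \emph{different} copy of $B$. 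So surjectivity of $v_*$ alone does not kill the generator: you must exhibit a specific cycle $a$ with $v_*(a)\neq 0$ and $h_*(a)=0$ simultaneously (or the symmetric statement). In case~(1) the paper extracts such an $a$ from the definition of $\tau$ (a cycle supported in $C\{i=0,\,j\le\tau(-K)\}$); your appeal to ``$V_{i_0}=0$'' does not supply this.

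The $\epsilon(K)=0$ case is where this gap is most serious. Here both $v_{\tau(-K)}$ and $h_{\tau(-K)}$ are surjective in homology, so ``both onto'' tells you nothing about survival. The paper's argument hinges on a lemma you do not have: $\epsilon(K)=0$ if and only if $v_\tau$ and $h_\tau$ induce the \emph{same} (nontrivial) map on homology. With this in hand, any preimage in $A_{\tau(-K)}$ of the generator of $(k,B)$ is carried by $h$ onto the generator of $(k-p,B)$, so the two are merely homologous in the cone; one must then chase a ``sawtooth'' of such identifications until reaching an $A_s$ with $s<\tau(-K)$, where the relevant map finally vanishes. This is what produces the threshold $\tfrac{p}{q}\ge 2\tau(K)$ rather than $2\tau(K)-1$, and it is absent from your sketch. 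Likewise, for $\epsilon(K)=-1$ (i.e.\ $\epsilon(-K)=1$), the paper uses the same lemma in contrapositive form to produce $a\in H_*(A_{\tau(-K)})$ with $v_*(a)\neq 0$ but $h_*(a)=0$, via Hom's horizontally simplified basis; your appeal to ``the relevant map is never onto'' does not establish this.
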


Clearly, if we are to have $c(\xi^-_{x/y})\neq 0$, we must orient $K$ such that $\rot(\K)\leq 0$. Alternatively, one can consider $\xi^+_{x/y}$, obtained by all positive stabilizations; indeed, it is easy to deduce versions of Theorems \ref{naturalitythmintro} and \ref{cinvariantthm} for $\xi^+$ using the fact that $\xi_{x/y}^-(\K) = \xi_{x/y}^+(\overline\K)$, where $\overline\K$ is $\K$ with the opposite strand orientation (cf. \cite[Lemma 2.2]{LS2011}).

A result analogous to Theorem \ref{cinvariantthm} for the case of integer surgeries was obtained by Golla \cite{golla}; note that in the integer case, the distinction between cases 2(a) and 2(b) of the theorem does not arise. 
%In light of Golla's results, the new information contained in Theorem \ref{cinvariantthm} lies just in the interval $\frac{p}{q}\in (2\tau(K) - 1, 2\tau(K))$. 
Golla also obtains some partial results for rational surgeries as \cite[Proposition 6.18]{golla}, which shows that in case 2(a) of Theorem \ref{cinvariantthm}, the condition $\frac{p}{q} > 2\tau(K) -1$ suffices to give the existence of {\it some} tight structure on $Y_{x/y}$. Our methods treat integer and rational surgeries simultaneously, and give information specific to the contact structure $\xi_{x/y}^-$.

As an application of Theorem \ref{cinvariantthm}, we prove the following generalization of a result of Lisca and Stipsicz \cite[Theorem 1.1]{LStight1}. Again, the existence portion of the result can also be deduced from the work of Golla \cite{golla}.

\begin{theorem} Let $K\subset S^3$ be a knot with slice genus $g_s(K)>0$ that admits a Legendrian representative $\K$ with $\tb(\K)+|\rot(\K)| = 2g_s(K) - 1$. Then the manifold $S^3_{p/q}(K)$ obtained by smooth $\frac{p}{q}$ surgery along $K$ admits a tight contact structure, for every $\frac{p}{q}\not\in [2g_s(K) -1-|\rot(\K)|, 2g_s(K) -1]$. 

In particular if $\K$ is oriented so that $\rot(\K)\leq 0$, then the contact structure $\xi^-_{x/y}(\K)$ is tight for $\frac{x}{y}$ the contact surgery coefficient corresponding to smooth $\frac{p}{q}$ surgery as above.
\end{theorem}

%If $\rot(\K) = 0$, the half-open interval in the theorem is interpreted to contain the single point $2g_s(K) -1$.

\begin{proof} Recall that $|\tau(K)|\leq g_s(K)$, so by \eqref{plamresult} we must have $\tau(K) = g_s(K)$. We consider contact $\frac{x}{y}$ surgery along the representative $\K$.  

If $\frac{p}{q}< 2g_s(K)-1-|\rot(\K)|$ then the contact surgery coefficient is $\frac{x}{y} = \frac{p}{q} - \tb(\K) < 0$. Any negative contact surgery can be realized by a sequence of contact $-1$ surgeries \cite{DGS}, and such surgeries result in a Stein fillable, hence tight, contact structure.

If $\frac{p}{q} > 2g_s(K)-1$ then we orient $\K$ such that $\rot(\K)\leq 0$ and consider the contact structure $\xi^-_{x/y}$ given by contact surgery with coefficient $\frac{x}{y} = \frac{p}{q}-\tb(\K) = \frac{p}{q} -(2g_s(K)-1-|\rot(\K)|)>0$. By a result of Hom \cite[p. 288]{Hom1}, since $\tau(K) = g_s(K)$ we have $\epsilon(K) = \mbox{sign}(\tau(K)) = 1$. The contact structure $\xi_{x/y}^-$ is then tight by Theorem \ref{cinvariantthm}.
\end{proof}

A similar argument shows that if $K$ is a slice knot with a Legendrian representative $\K$ satisfying $\tb(\K) + |\rot(\K)| = 2g_s(K) -1 = -1$, then $S^3_{p/q}(K)$ admits a tight contact structure for all $\frac{p}{q}\not\in [-1-|\rot(\K)|, 0)$. As an example, Figure \ref{legendrian} shows a family of slice Legendrian knots $\K_n$, $n\geq 1$, each satisfying $\tb(\K_n) = -2$ and $|\rot(\K_n)| = 1$. For $n = 1$ the smooth type of $\K_n$ is the knot $8_{20}$, a hyperbolic knot, from which it is easy to see that $\K_n$ is hyperbolic for all but at most finitely many $n$. It then follows that $S_{p/q}(\K_n)$ admits a tight contact structure for all $\frac{p}{q}\not\in [-2,0)$, in particular by taking $p = 1$ we obtain (for each $n$) an infinite family of hyperbolic integer homology spheres with tight contact structures.

\begin{figure}[t]
\def\svgwidth{2.5in}
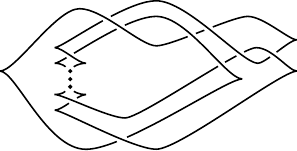
\caption{\label{legendrian}The Legendrian $\K_n$, with $n$ crossings in the indicated twist region. A ribbon move across the twisted band shows $\K_n$ is slice. Note that for $n>1$, $\K_n$ generally does not have maximal Thurston-Bennequin invariant in its smooth isotopy class.}
\end{figure}

Our techniques also allow specification of the class $c(\xi_{x/y}^-)$ more precisely than in Theorem \ref{cinvariantthm}, thanks to the second part of Theorem \ref{naturalitythmintro}. To understand this, we briefly recall a method due to Ozsv\'ath and Szab\'o for calculating the Heegaard Floer homology of the result of rational surgery along $K\subset S^3$ (see \cite{OSratsurg}). 

Given $K\subset S^3$, Ozsv\'ath and Szab\'o define a filtration of the chain complex $B := \widehat{CF}(S^3)$ and, with some additional machinery, produce a chain complex denoted $CFK^\infty(K)$. From this complex one obtains a sequence of subquotient complexes $A_s = A_s(K)$, $s \in \zee$, with certain properties:
\begin{itemize}
\item For $|s|\gg 0$, we have $A_s \simeq B$, where $\simeq$ denotes chain homotopy equivalence.
\item For any $n\geq 2g(K) -1$ and $|s|\leq \frac{n}{2}$, there is an isomorphism
\[
A_s \cong \widehat{CF}(S^3_n(K), \tt_s),
\]
where $\tt_s$ is the \spinc structure on $S^3_n(K)$ obtained as follows. Let $W_n: S^3\to S^3_n(K)$ be the surgery cobordism and $\s_s\in\Spinc(W_n)$ the \spinc structure characterized by
\[
\langle c_1(\s_s), [\widetilde{S}]\rangle + n = 2s
\]
where $[\widetilde{S}]$ is obtained from a Seifert surface capped off in $W_n$ as before. Then $\tt_s = {\s_s}|_{S^3_n(K)}$.
\item For each $s$ there are chain maps $v_s, h_s: A_s\to B$. If $v_{s*}$ and $h_{s*}$ are the corresponding maps in homology, we have that for $s\gg 0$, $v_{s*}$ is an isomorphism while $h_{s*}$ is trivial, and likewise $v_{-s*}$ is trivial while $h_{-s*}$ is an isomorphism.
\end{itemize}

Now suppose $\frac{p}{q}\in\cue$ is a rational number, with $q>0$ as before. Define a chain complex $\XX_{p/q}(K)$ as follows. First let 
\[
\AA_{p/q} = \bigoplus_{k\in\zee} (k, A_{\lfloor\frac{k}{q}\rfloor}) \quad\mbox{and}\quad \BB_{p/q} = \bigoplus_{k\in\zee} (k, B).
\]
Here the entry ``$k$'' in $(k, A_{\lfloor\frac{k}{q}\rfloor})$ indexes the direct sum, as in \cite{OSratsurg}. Define a chain map $D_{p/q}: \AA_{p/q}\to \BB_{p/q}$ by
\[ 
D_{p/q}(k, x) = (k, v(x)) + (k + p, h(x)).
\]
Here and to follow, we omit the subscript on the maps $v$ and $h$ whenever their domain is clear from context.

Finally, let $\XX_{p/q}(K)$ be the mapping cone of $D_{p/q}$. This mapping cone gives the Floer homology of the result of $p/q$ surgery along $K$, and also determines the maps induced by the surgery cobordism, according to the following.

\begin{theorem}\label{ratsurgformula} For any knot $K\subset S^3$ and any rational number $\frac{p}{q}\in \cue$, we have:
\begin{enumerate}
\item[1.] (Ozsv\'ath-Szab\'o \cite{OSratsurg}) There is a chain homotopy equivalence $\Phi: \XX_{p/q}(K) \to \cfhat(S^3_{p/q}(K))$, in particular the homology of $\XX_{p/q}(K)$ is isomorphic to $\hfhat(S^3_{p/q}(K))$.

\item[2.] Let $W_{p/q}: S^3 \#{-L(q,r)}\to S^3_{p/q}(K)$ be the rational surgery cobordism, where $p = mq-r$, and let $\s\in \Spinc(W_{p/q})$. Write $[\widetilde{S}]$ for the generator of $H_2(W_{p/q})/H_2(-L(q,r))$ as above. Then the map in Floer homology induced by $\s$ corresponds via $\Phi$ to the inclusion of $(k, B)$ in $\XX_{p/q}(K)$, where $k$ is determined by 
\begin{equation}\label{c1determination}
\langle c_1(\s), [\widetilde{S}]\rangle + p + q -1 = 2k.
\end{equation}
\end{enumerate}
\end{theorem}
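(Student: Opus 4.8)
The statement I need to prove is part (2): identifying which inclusion of a summand $(k,B)$ in the mapping cone $\XX_{p/q}(K)$ corresponds, under the Ozsv\'ath--Szab\'o homotopy equivalence $\Phi$, to the cobordism map $F_{W_{p/q},\s}$, with the index $k$ pinned down by the first Chern class formula \eqref{c1determination}. Let me think about how to set this up.

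The plan is to reduce the rational statement to the integer statement, which is essentially contained in the work of Ozsv\'ath and Szab\'o on the integer surgery formula (and its compatibility with cobordism maps), and then track the $\spinc$ bookkeeping carefully through the reduction. So I would proceed as follows.

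First I would recall the structure of the proof of part (1) in \cite{OSratsurg}: one realizes $S^3_{p/q}(K)$ via a sequence of two-handle attachments, or alternatively one uses the fact that for $n \gg 0$ the truncated mapping cone computes $\hfhat$ via the large-surgery isomorphism $A_s \cong \cfhat(S^3_n(K),\tt_s)$. The key point is that the homotopy equivalence $\Phi$ is built from the exact triangles relating $S^3$, $S^3_n(K)$, and $S^3_{p/q}(K)$-type manifolds, and in each such triangle the relevant maps are cobordism maps, decomposed along $\spinc$ structures. The summand $(k,B) = (k,\cfhat(S^3))$ sits in the mapping cone as the target of the component maps $v$ and $h$; the composite of $\Phi$ with the inclusion of $(k,B)$ is, by construction of $\Phi$ as a composition of triangle maps, precisely a cobordism map from $\cfhat(S^3 \# {-L(q,r)})$ to $\cfhat(S^3_{p/q}(K))$ for a specific $\spinc$ structure on $W_{p/q}$.

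Next I would make the $\spinc$ identification explicit. For the integer case ($q=1$, so $r=0$, $L(q,r) = S^3$, and $W_{p/1} = W_p$ is a single two-handle cobordism), the relation I want degenerates to $\langle c_1(\s_k), [\widetilde S]\rangle + p - 1 + 1 = 2k$, i.e. $\langle c_1(\s_k),[\widetilde S]\rangle + p = 2k$ — wait, I must be careful: in the large-surgery normalization recalled in the excerpt, the $\spinc$ label $s$ on $A_s$ for $S^3_n(K)$ satisfies $\langle c_1(\s_s),[\widetilde S]\rangle + n = 2s$, and the summands of the integer mapping cone are indexed so that $(k,B)$ receives $v$ from $(k,A_k)$ and $h$ from $(k,A_{k-p})$. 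Tracing through Ozsv\'ath--Szab\'o's identification of $\Phi$ with triangle maps, the inclusion of $(k,B)$ corresponds to the cobordism $W_p$ with the $\spinc$ structure $\s$ satisfying $\langle c_1(\s),[\widetilde S]\rangle + p - 1 + 1 = 2k$, which is \eqref{c1determination} with $q=1$. For general $q$, the rational surgery cobordism $W_{p/q}$ factors (up to the connect-sum with $-L(q,r)$) through the same kind of two-handle attachment, and the shift $\lfloor k/q\rfloor$ in the definition of $\AA_{p/q}$ accounts for the $L(q,r)$ factor; the $+q-1$ term in \eqref{c1determination} is exactly the contribution of the lens space side, which I would verify by computing $c_1$ restricted to the closed-off Seifert surface in the presence of the $q$ parallel copies of the core. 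Concretely, I would compare $[\widetilde S]$ in $W_{p/q}$ with $[\widetilde S_n]$ in the large integer surgery cobordism $W_n$ used to identify $A_{\lfloor k/q\rfloor}$, using that $[\widetilde S]\mapsto q[F]$ while the large-surgery Seifert class maps to $[F_n]$, and keeping track of the self-intersection data that enters the first Chern class evaluation.

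The main obstacle I expect is precisely this last bookkeeping step: matching the normalization of $\spinc$ structures used in the definition of the rational mapping cone $\XX_{p/q}(K)$ (where the index $k$ runs over all of $\zee$ and the $A$-summand is $A_{\lfloor k/q\rfloor}$) with the geometric normalization coming from $c_1$ evaluated on $[\widetilde S]$ in $W_{p/q}$, through the chain of exact triangles and large-surgery isomorphisms that build $\Phi$. In particular one must check that the ``$+p+q-1$'' offset is correct and not off by a multiple of $p$ (which would shift $k$ by a multiple of the period) or by $1$; I would pin this down by testing against the large-surgery case, where the identification $A_s \cong \cfhat(S^3_n(K),\tt_s)$ together with the known $\spinc$ normalization $\langle c_1(\s_s),[\widetilde S]\rangle + n = 2s$ forces the constant, and by the consistency of the two maps $v$ and $h$ (which must land in the same $(k,B)$ from domains whose $A$-indices differ by $p$), which fixes the period. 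Everything else is a routine, if lengthy, diagram chase through the construction of $\Phi$ in \cite{OSratsurg}.
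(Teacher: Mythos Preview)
Your strategy is essentially the paper's: realize $S^3_{p/q}(K)$ as an integer surgery on the rationally nullhomologous knot $K\# O_{q/r}\subset -L(q,r)$, trace the $\spinc$ labels through the large-surgery identification and the exact-triangle construction of $\Phi$, and pin down the constant by comparison with the integer case. The paper makes your ``bookkeeping'' step concrete via an explicit Chern-number computation on a connected-sum Heegaard triple (yielding $\langle c_1(s_w(\psi_{j,i})),[\P_{\widetilde S}]\rangle - q\langle c_1(s_w(\psi_j)),[\P_S]\rangle = q+r-1-2i$, from which both the ``$+q-1$'' offset and $s=\lfloor k/q\rfloor$ drop out) together with a framing-independent label function $f(\x)$ that matches the $\spinc$ data across the two cobordisms in the triangle; this is exactly the obstacle you anticipated, and it is the bulk of the work.
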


For the case of integer surgeries, the analog of the second claim is spelled out in \cite{OSintsurg}. The version for rational surgeries is more complicated but essentially similar, however it  (particularly formula \eqref{c1determination}) does not seem to appear in the literature. We give a proof of the second part of the theorem in Section \ref{section5} (Corollary \ref{inclusioncor}).
 
Comparing the second parts of Theorems \ref{naturalitythmintro} and \ref{ratsurgformula}, bearing in mind that the necessary orientation reversal is equivalent to replacing $K$ by $-K$ and $p$ by $-p$ in Theorem \ref{ratsurgformula}, gives the following.

\begin{corollary}\label{cor1} Let $K\subset S^3$ be a knot with Legendrian representative $\K$, fix $0< \frac{x}{y}\in \cue$, and let $\frac{p}{q} = \tb(\K) + \frac{x}{y}$. Then the contact invariant $c(\xi_{x/y}^-)\in \hfhat(-S^3_{p/q}(K))$ is equal (up to conjugation) to the generator of the image in homology of the map given by the inclusion
\[
(k,B) \hookrightarrow \XX_{-p/q}(-K),
\]
where $k$ satisfies
\begin{equation}
2k = (\rot(\K) - \tb(\K) + 1)q -2.
\label{kchar}
\end{equation}
\end{corollary}

This corollary is the essential step in the proof of Theorem \ref{cinvariantthm}. Note also that it gives a direct description of the contact invariant $c(\xi_{x/y}^-)$ in terms of the mapping cone formula for Floer homology.

We now give a concrete example of the application of Corollary \ref{cor1} to calculation of a contact invariant. The example illustrates the ``typical'' situation in which one obtains a nonvanishing invariant.

Let $K$ be the $(1,2)$ cable of the right-handed trefoil knot (here the $(p,q)$ cable of
a knot type $K'$ is the knot type obtained by taking the curve that traverses the meridional direction $p$ times and the longitudinal direction $q$ times on the boundary of a tubular neighborhood of a representative of $K'$). This knot has $\tau(K) = g(K) = 2$, and admits a Legendrian representative $\K$ with $\tb(\K) = 2$ and $\rot(\K) = -1$ (see \cite{ELT2012}). In particular we have $\tb(\K) - \rot(\K) = 2\tau(K) -1$, and hence we expect to find nonvanishing contact invariant $c(\xi_{x/y}^-)$ for all $\frac{x}{y}$ with corresponding smooth surgery coefficient satisfying $\frac{x}{y} + \tb(\K) > 2\tau(K) - 1 = 3$. Let us choose $\frac{x}{y} = \frac{3}{2}$, corresponding to smooth surgery with coefficient $\frac{7}{2}$. According to Corollary \ref{cor1} the contact invariant $c(\xi_{3/2}^-)$ is given by the image in homology of the inclusion of $(k,B)$ in the mapping cone, where for our data $k = -3$. 

The knot Floer complex for $K$ was determined by Hedden \cite[Proposition 3.2.2]{heddenthesis}. For the experts, the results can be summarized diagrammatically and without explanation as in Figure \ref{fig:small}.

\begin{figure}[h!]
\begin{center}
  \includegraphics[width=4.5in]{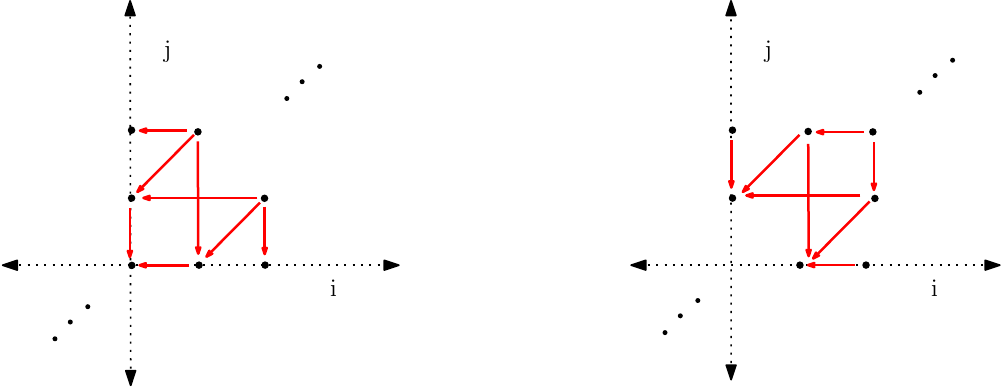}
 \caption{$CFK^{\infty}(S^3,K)$ (left) and $CFK^{\infty}(S^3, - K)$ (right)}
  \label{fig:small}
\end{center}
\end{figure}

For our purposes, it more than suffices to know the following, where $A_s$ refers to the subquotient complex obtained from the complex for the mirror knot $-K$:
\[
H_*(A_s) \cong \left\{\begin{array}{ll} \F & \mbox{$|s|\geq 2$}\\ \F^3 & |s| = 1 \\ \F^5 & s = 0 \end{array}\right. \qquad \mbox{$v_{s*}$ and $h_{-s*}$ are} \left\{\begin{array}{ll} 0 & \mbox{ if $s \leq -2$} \\ \mbox{onto} & \mbox{ if $s \geq -1$}\end{array}\right.
\]

A portion of the cone $\XX_{-7/2}(-K)$ can then be described as follows:
\[
\begin{diagram}[height=.5cm]
\cdots \hspace*{1em}& (-4, A_{-2})\hspace*{1em} & (-3, A_{-2})\hspace*{1em} & (-2, A_{-1}) & \cdots & (3, A_1) \hspace*{1em}& (4, A_2)\hspace*{1em} & (5, A_2) & \cdots\\
          & \ldTo           &                          &           &           &   \ldTo(4,4)^{h_* = 0}   &                   & & \\
          &                   &  \dTo^{v_* = 0} &           &            &                                & \dTo_\sim & & \\
          &                   &                          &           &            &                                 &                  &\ldTo & \\
\cdots \hspace*{1em} & B          & (-3, B)        & B          & \cdots &B     & B  & B     & \cdots 
\end{diagram}
\]
Since the homology of $(-3,B)$ does not interact with the map induced on homology by $D_{-7/2}$ it clearly survives to the homology of $\XX_{-7/2}(-K)$, which proves the nonvanishing of $c(\xi_{3/2}^-)$. A couple of other remarks:
\begin{itemize}
\item By computing the homology of $\XX_{-7/2}(-K)$, we can see that the class $c(\xi_{3/2}^-)$ is the generator of $\hfhat(-S^3_{7/2}(K), \tt_\xi) \cong \F$. In general the technique allows explicit description of the contact invariant as an element of its corresponding Floer group, via the mapping cone as above (at least, modulo automorphisms). 
\item Varying the numerator of the surgery parameter $x/y$, or equivalently $p/q$, has the effect of adjusting the source of the arrow labeled as $h_*$ in the diagram above. In particular the reader can check that the homology generator of $(-3,B)$ vanishes in $H_*(\XX_{-p/2}(-K))$ for any $p< 7$, and survives whenever $p\geq 7$, the transition corresponding to the (non) vanishing of the relevant map $h_*$ (of course, Corollary \ref{cor1} applies only for surgery coefficients $p/q >2$). Similarly, if a Legendrian representative for $K$ is chosen with a smaller value of $\tb - \rot$, we are led to consider the inclusion of $(k, B)$ for $k>-3$. The homology generator of this group is in the image of a $v_*$ and hence vanishes.
\item For this choice of $K$, the surgery manifold $S^3_{7/2}(K)$ is a Seifert fibered space. In particular, it was known previously by work of Lisca-Stipsicz to admit a tight contact structure (likewise, Golla's results apply to show such a structure exists). However by our method we obtain an explicit, relatively simple surgery description for such a structure.  
\end{itemize}

In the next section we describe the key geometric construction that leads to our results, called reducible open book surgery, and prove a naturality property for the contact invariant under this operation. Section \ref{surgerysec} shows how to apply reducible open book surgery to deduce Theorem \ref{naturalitythmintro}. The proof of Theorem \ref{cinvariantthm} is given in Section \ref{lastsection}, which may be read independently of the preceding sections. In the last section we prove the second part of Theorem \ref{ratsurgformula}.

\subsection*{Acknowledgements} We are grateful to \c{C}a\u{g}r\i\ Karakurt and Katherine Raoux for useful discussions and comments, and to John Etnyre for his interest and support of this project. Thanks are also due to the referees for their careful reading and many corrections. The first author was supported in part by NSF grant DMS-1309212 and a grant from the Simons Foundation (523795, TM). The second author was supported in part by an AMS-Simons travel grant, and also thanks the Max Planck Institute for Mathematics for their hospitality during the summer of 2014.
 
\section{Reducible Open Book Surgery}\label{robssection}

Let $Y$ be a closed oriented 3-manifold equipped with an open book decomposition $(S, \phi)$. Recall that this means $S$ is a compact oriented surface with boundary, and the monodromy $\phi$ is an orientation preserving diffeomorphism fixing a neighborhood of $\partial S$. Moreover, we are given a diffeomorphism $Y\cong (S\times [0,1])/ \sim$, where the equivalence relation identifies $(x,1)$ with $(\phi(x), 0)$ for all $x\in S$, and also $(x, t)$ with $(x, t')$ for all $t,t'$ and all $x\in\partial S$. 

A surface diffeomorphism is {\it reducible} if a power preserves an essential (multi-) curve on the surface. Here we will be interested particularly in the case that $\gamma \subset S$ is a simple closed curve, and $\phi$ fixes $\gamma$. Moreover, we assume $\gamma$ separates $S$ into two subsurfaces, each containing at least one component of $\partial S$. In this situation, {\it reducible open book surgery} along $\gamma$ is defined to be the surgery on $\gamma$ (thought of as a knot in $Y$) with framing equal to that induced by the page on which $\gamma$ lies.

As a basic example one could consider $\gamma$ to be parallel to a boundary component of $S$, assuming $\partial S$ has at least two components. Then ``reducible'' open book surgery along $\gamma$ is equivalent to page-framed surgery along the corresponding boundary (binding) component, an operation usually called ``capping off'' the open book. 

We write $Y_\gamma$ for the result of page-framed surgery along a reducing curve $\gamma$ as above.

\begin{lemma} Let $S'\cup S''$ be the (disconnected) result of surgery along $\gamma\subset S$, thought of as an abstract surface. Write $\phi'$ and $\phi''$ for the diffeomorphisms of $S'$ and $S''$ obtained by restricting $\phi$ and then extending by the identity across the surgery disks. Then there is a diffeomorphism
\[
Y_\gamma \cong Y'\#Y'',
\]
where $Y'$ and $Y''$ are described by the open books $(S', \phi')$ and $(S'',\phi'')$. 
\end{lemma}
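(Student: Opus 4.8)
The plan is to construct the diffeomorphism directly, by recognising page-framed surgery along $\gamma$ as a modification localised near a separating torus built from $\gamma$. As is implicit already in the statement (in the descriptions of $\phi'$ and $\phi''$), I would assume at the outset that $\phi$ restricts to the identity on a closed annular neighbourhood $A\cong\gamma\times[-1,1]$ of $\gamma$. Then $S\setminus\mathrm{int}(A)=S_0'\sqcup S_0''$, each piece containing at least one component of $\partial S$, and since $\phi$ is the identity near $\partial S$ it cannot interchange the two pieces, so $\phi(S_0')=S_0'$ and $\phi(S_0'')=S_0''$; writing $S'=S_0'\cup_\gamma D'$ and $S''=S_0''\cup_\gamma D''$ for the surgery disks $D',D''$, the maps are $\phi'=\phi|_{S_0'}\cup\mathrm{id}_{D'}$ and $\phi''=\phi|_{S_0''}\cup\mathrm{id}_{D''}$. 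Using $\phi|_A=\mathrm{id}$, the subset $T:=(A\times[0,1])/{\sim}\cong\gamma\times S^1$ is an embedded torus in $Y$ which separates it as $M'\cup_T M''$, where $M'=(S_0'\times[0,1])/{\sim}$ and $M''=(S_0''\times[0,1])/{\sim}$ are compact $3$-manifolds with boundary $T$.

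Next I would identify $Y'$ and $Y''$ as Dehn fillings of $M'$ and $M''$: in $Y'$ the region $(D'\times[0,1])/{\sim}$ is a genuine solid torus $V'\cong D'\times S^1$ (no collapsing occurs because $\partial D'=\gamma$ lies in the interior of $S'$), attached to $M'$ along $T$ so that the meridian $\partial D'\times\{\mathrm{pt}\}$ of $V'$ becomes the ``page slope'' $\mu:=\gamma\times\{\mathrm{pt}\}$ on $T=\partial M'$; thus $Y'=M'(\mu)$, and likewise $Y''=M''(\mu)$. To analyse the surgery, I would choose a tubular neighbourhood $\nu(\gamma)\cong\gamma\times D$ ($D$ a transverse disk) adapted to $T$, so that $T$ cuts it into two half-solid-tori $\nu(\gamma)\cap M'=\gamma\times D^-$ and $\nu(\gamma)\cap M''=\gamma\times D^+$. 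Excising $\nu(\gamma)$ removes only a collar of a boundary annulus from each side, so $\widehat{M}'\cong M'$ and $\widehat{M}''\cong M''$ where $\widehat{M}'=M'\setminus\mathrm{int}(\gamma\times D^-)$, etc.; moreover $Y\setminus\nu(\gamma)=\widehat{M}'\cup_{\mathcal A}\widehat{M}''$ glued along an annulus $\mathcal A\subset T$, and $\partial\nu(\gamma)=\mathcal A'\cup_{c_1\sqcup c_2}\mathcal A''$, where $\mathcal A'=\partial\widehat{M}'\setminus\mathrm{int}(\mathcal A)$ and $\mathcal A''=\partial\widehat{M}''\setminus\mathrm{int}(\mathcal A)$ are annuli and $c_1,c_2$ are the two components of $\partial\mathcal A$.

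The crux is a local computation of the page framing: the page through $\gamma$ meets $\partial\nu(\gamma)$ in two parallel copies of the slope $\gamma\times\{\mathrm{pt}\}$, so the page-framing longitude $\lambda_{\mathrm{pg}}$ equals this slope — equivalently, $\lambda_{\mathrm{pg}}$ is the common core slope of $\mathcal A'$ and $\mathcal A''$, and under $\widehat{M}'\cong M'$ it corresponds exactly to the filling slope $\mu$ above. Now glue in the surgery solid torus $V$ along $\lambda_{\mathrm{pg}}$: since $c_1$ and $c_2$ are isotopic in $\partial\nu(\gamma)$ to $\lambda_{\mathrm{pg}}$, they bound meridian disks $\Delta_1,\Delta_2$ of $V$, which cut $V$ into two $3$-balls $V^-$ (with $\partial V^-=\mathcal A'\cup\Delta_1\cup\Delta_2$) and $V^+$ (with $\partial V^+=\mathcal A''\cup\Delta_1\cup\Delta_2$). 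Setting $N'=\widehat{M}'\cup_{\mathcal A'}V^-$ and $N''=\widehat{M}''\cup_{\mathcal A''}V^+$ gives $\partial N'=\mathcal A\cup\Delta_1\cup\Delta_2\cong S^2\cong\partial N''$ and $Y_\gamma=N'\cup_{S^2}N''$, hence $Y_\gamma=\widehat N'\#\widehat N''$ after capping these spheres with balls $B^3$. Finally, $\widehat N'\cong Y'$: the piece $V^-\cup B^3$ — two balls glued along two disjoint boundary disks — is a solid torus glued to $\widehat{M}'$ along all of $\partial\widehat{M}'$, and a properly embedded disk in $V^-$ separating $\Delta_1$ from $\Delta_2$ is a meridian disk of it whose boundary is a core circle of $\mathcal A'$, i.e.\ the slope $\mu$; thus $\widehat N'=M'(\mu)=Y'$, and symmetrically $\widehat N''=Y''$.

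The main obstacle will be bookkeeping rather than anything conceptual: arranging $\nu(\gamma)$ to be adapted to $T$, keeping the annuli $\mathcal A,\mathcal A',\mathcal A''$ straight inside the various boundary tori, and checking the triple coincidence that the page framing simultaneously (i) turns the in-page pushoffs of $\gamma$ into meridians of $V$, (ii) is the core slope of $\mathcal A'$ and $\mathcal A''$, and (iii) is the meridian $\mu$ of the fillings defining $Y'$ and $Y''$. Once that coincidence is secured, slicing $V$ along the disks $\Delta_1,\Delta_2$ makes the connected-sum splitting immediate; the only non-formal ingredient is the standard reduction, at the start, to $\phi$ being the identity near $\gamma$.
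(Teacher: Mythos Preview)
Your proof is correct and follows essentially the same approach as the paper: identify the separating torus $T$ swept out by $\gamma$ (a minor slip---you write $T=(A\times[0,1])/{\sim}$ where you clearly intend $T=(\gamma\times[0,1])/{\sim}$), and show that page-framed surgery converts $T$ into the separating $2$-sphere of a connected sum. The paper's own proof is only a two-sentence sketch of exactly this idea, so your detailed Dehn-filling and solid-torus-splitting argument is a careful execution of what the authors leave to the reader.
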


The proof is straightforward; we point out two items. First, since it is preserved by the monodromy the curve $\gamma$ sweeps out a torus in $Y$ (and the framing induced by the torus is the same as that induced by $S$). After page-framed surgery, the torus becomes the separating 2-sphere in the connected sum $Y'\# Y''$. Second, if $W_\gamma: Y\to Y_\gamma$ is the 2-handle cobordism corresponding to the surgery, then the cocore of the 2-handle intersects $Y_\gamma$ in a knot $K'\# K''$, where $K'\subset Y'$ is the 1-braid in the open book $(S', \phi')$ traced by the center of the capping disk in $S'$ (and similar for $K''$). We can then think of $Y$ as obtained from $Y'\cup Y''$ by a version of ``contact normal sum'' along the (transverse) knots $K'$ and $K''$, with framings induced by the corresponding open books. 

Now, by fundamental work of Thurston-Winkelnkemper \cite{TW} and Giroux \cite{giroux}, there is a correspondence between open book decompositions and contact structures on 3-manifolds. In particular, the 3-manifolds $Y$, $Y'$ and $Y''$ each carry contact structures $\xi$, $\xi'$ and $\xi''$, and the cobordism $W_\gamma$ connects the contact manifolds $(Y,\xi)$ and $(Y'\#Y'', \xi'\# \xi'')$. 

If $\gamma$ is parallel to a component of $\partial S$, then $S''$, say, is just a disk and the monodromy $\phi''$ is isotopic to the identity. Thus $(Y'', \xi'') \cong (S^3,\xi_{std})$, and $W_\gamma$ is the ``capping-off cobordism'' studied by Baldwin in \cite{baldwincap}. The main result of \cite{baldwincap} states that the map in Heegaard Floer homology induced by the reversed cap-off cobordism, $W_\gamma: -Y'\to -Y$, equipped with a particular \spinc structure, carries the contact invariant of $(Y',\xi')$ to that of $(Y,\xi)$. For the more general reducible open book surgery, recall that under the K\"unneth decomposition $\hfhat(-(Y'\# Y'')) = \hfhat(-Y')\otimes \hfhat(-Y'')$, we can write $c(\xi'\#\xi'') = c(\xi')\otimes c(\xi'')$. However, the obvious generalization of Baldwin's theorem is false, in general: there is usually {\it not} a \spinc structure on $W_\gamma$ that carries $c(\xi')\otimes c(\xi'')$ to $c(\xi)$. Nevertheless, an adaptation of Baldwin's techniques can prove a statement that suffices for our purposes. 

Note that Baldwin also considers reducible open books in the context of capping off: he observes that if $Y_1 = (S_1,\phi_1)$ and $Y_2 = (S_2, \phi_2)$ are open books each with at least two boundary components then the open book $Y = (S_1\cup S_2, \phi_1\cup\phi_2)$ obtained by gluing two boundary components of $S_1$ and $S_2$ can be realized as the result of capping off $Y_1\# Y_2$. Thus the cobordism $-Y\to -(Y_1\# Y_2)$ respects the contact class. Reducible open book surgery results in a cobordism in the other direction, $-(Y'\# Y'')\to -Y$, and behaves somewhat differently. To understand this, we recall some of the basics of Heegaard Floer theory, and the construction of $c(\xi)$ via open books following Honda, Kazez and Mati\'c \cite{HKM}. 

The Heegaard Floer chain complex for a closed oriented 3-manifold $Y$ requires for its construction a choice of Heegaard diagram: this is a triple $(\Sigma, \aalpha,\bbeta)$ where $\Sigma$ is a closed oriented surface of genus $g\geq 1$ and $\aalpha = \{\alpha_1,\ldots, \alpha_g\}$ and $\bbeta = \{\beta_1,\ldots, \beta_g\}$ are $g$-tuples of simple closed curves disjointly embedded in $\Sigma$, such that the members of each $g$-tuple are linearly independent in $H_1(\Sigma;\zee)$. The $g$-tuples determine a pair of 3-dimensional handlebodies $H_\alpha$ and $H_\beta$ uniquely specified by the requirement that each $\alpha_i$ bound an embedded disk in $H_\alpha$ and correspondingly for the $\beta_i$ in $H_\beta$. The orientations of $H_\alpha$ and $H_\beta$ are determined by the requirement $\partial H_\alpha = \Sigma  = - \partial H_\beta$, and the triple $(\Sigma, \aalpha,\bbeta)$ is a Heegaard diagram for $Y$ if there is an orientation-preserving diffeomorphism between $H_\alpha\cup_\Sigma H_\beta$ and $Y$. By isotopy of the curves in $\aalpha$ and $\bbeta$, we arrange that all the intersections between the $\alpha_i$ and $\beta_j$ are transverse double points; for the purposes of Heegaard Floer theory we must also choose a basepoint $w\in \Sigma$ in the complement of $\aalpha$ and $\bbeta$. The collection $(\Sigma, \aalpha,\bbeta,w)$ is called a {\it pointed} Heegaard diagram; we require certain admissibility conditions on this diagram that can be achieved by isotopy as well (c.f. \cite[Section 5]{OS1}).

Given a (pointed) Heegaard diagram, the Heegaard Floer chain complex has generators obtained as follows. We form the symmetric power $\Sym^g\Sigma$, being the space of unordered $g$-tuples of (not necessarily distinct) points on $\Sigma$, topologized in the natural way as a quotient of $\Sigma^g$. This space is naturally an orbifold, but it is well-known that it can be provided with the structure of a smooth $2g$-dimensional manifold by, for example, choosing a complex structure on $\Sigma$. In $\Sym^g\Sigma$ lie two $g$-dimensional tori $\Ta$ and $\Tb$, being the images of $\alpha_1\times\cdots\times \alpha_g$ and $\beta_1\times\cdots\times\beta_g$; since the $\alpha$ curves are disjoint, we see $\Ta$ is smoothly embedded (and similarly for $\Tb$). Moreover, under the admissibility conditions, the two tori intersect transversely at isolated points $\x\in \Ta\cap \Tb$ that can be described concretely as $g$-tuples $\x = \{x_1,\ldots,x_g\}$ such that each $x_j$ lies at an intersection of  $\alpha_j$ and $\beta_{\sigma(j)}$, where $\sigma$ is some permutation of $\{1,\ldots, n\}$. 

The chain complex $\cfhat(Y)$ is freely generated over $\F = \zee/2\zee$ by the intersection points $\x$. The construction of the differential is much more delicate, involving a count of holomorphic disks with boundary on the tori $\Ta$ and $\Tb$ and ``connecting'' intersection points $\x$ and $\y$ in an appropriate sense. The most we need to say for the moment is that the generators $\x$ fall into equivalence classes respected by the differential; in fact, granted the choice of basepoint $w$ one can associate a \spinc structure $\s_w(\x)$ to each generator, and there is a corresponding decomposition of chain complexes $\cfhat(Y) = \bigoplus_\s \cfhat(Y,\s)$.

Now suppose we are given an open book decomposition $(S,\phi)$ for $Y$. Following Honda, Kazez, and Mati\'c \cite{HKM}, we can then obtain a pointed Heegaard diagram for $Y$ as follows:
\begin{itemize}
\item Let $a_1,\ldots, a_n$ be a collection of properly embedded arcs in $S$ that cut $S$ into a disk, and let $b_1,\ldots,b_n$ be a set of arcs obtained by a small translation of the $a_i$  that moves the boundaries in the positively oriented direction of $\partial S$, and such that $b_i$ intersects $a_i$ transversely in a single point of $\mbox{int}(S)$.
\item The Heegaard surface $\Sigma$ is given by $S_{1/2}\cup(-S_0)$, where $S_t$ refers to the image of $S\times\{t\} \subset S\times [0,1]$ in $Y$.
\item For $i = 1,\ldots n$, the attaching circle $\alpha_i$ is equal to $a_i \times \{\frac{1}{2}\} \cup a_i\times\{0\}$, while $\beta_i$ is given by $b_i\times \{\frac{1}{2}\}\cup\phi(b_i)\times \{0\}$. We assume that all intersections between $\alpha$ and $\beta$ curves are transverse.
\item The basepoint $w$ for the diagram is placed in $S_{1/2}$, away from the regions between the arcs $a_i$ and $b_i$.
\end{itemize}
See Figure \ref{afigure}(a) below for an example, where the $\alpha$- and $\beta$- curves appear in red and blue, respectively.

We refer to a pointed Heegaard diagram constructed in this way from an open book decomposition as an {\it HKM diagram}. Reversing the roles of $\alpha$ and $\beta$ curves, we can think of $(\Sigma, \bbeta,\aalpha, w)$ as giving a Heegaard diagram for $-Y$ (which we also call an HKM diagram). In this diagram the generator $\x$ for $\cfhat(-Y)$ corresponding to the $n$ intersection points between the $a_i$ and $b_i$ on $\Sigma_{1/2}$ is a cycle---a fact which relies on the location of the basepoint---and by \cite{HKM} it represents the contact invariant $c(\xi)\in \hfhat(-Y)$.

\begin{definition}\label{strongdef} Let $Y$ be a rational homology 3-sphere. An open book decomposition $(S,\phi)$ supporting a contact structure $\xi$ on $Y$ is {\em HKM strong} if there is an HKM diagram corresponding to $(S, \phi)$ with the property that the canonical generator $\x$ is the only intersection point in its \spinc structure.
\end{definition}

Since in an HKM diagram the canonical intersection point $\x$ lies in the \spinc structure $\s_\xi$ associated to the contact structure, a necessary condition for a contact rational homology sphere $(Y,\xi)$ to admit an HKM strong open book decomposition is that the Floer homology $\hfhat(-Y, \s_\xi)$ is isomorphic to $\F$ (and $\x$ represents the generator of this module). In fact, by moving the basepoint in the HKM diagram (c.f. \cite[Lemma 2.19]{OS1}), one sees that for every \spinc structure $\tt$, the group $\hfhat(-Y,\tt)$ is isomorphic to $\F$: thus if $Y$ is a rational homology sphere admitting an HKM strong open book decomposition then necessarily $Y$ is an $L$-space. We will see below that the standard contact structure on a lens space admits an HKM strong open book decomposition.

\begin{figure}[b]
\includegraphics[width=3in]{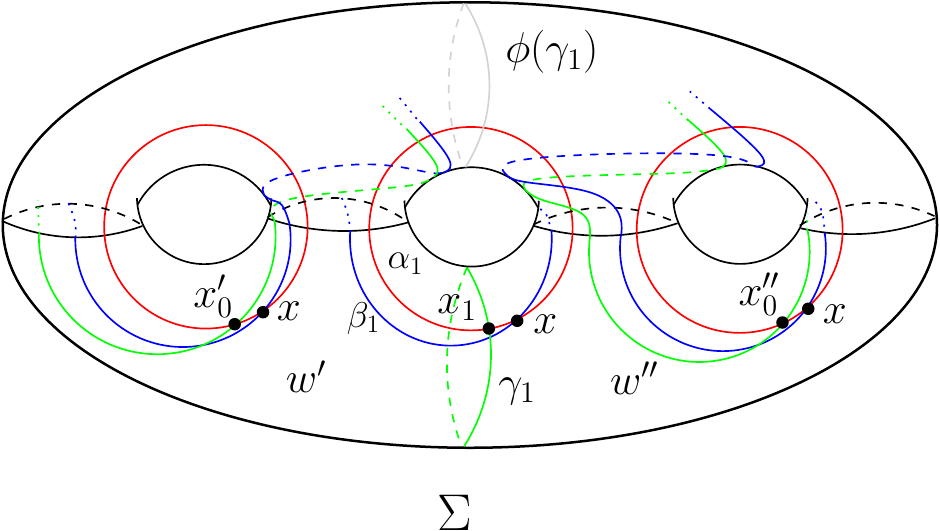} \\
(a) \vspace{2em}\\
\parbox{2.25in}{ \begin{centering}\includegraphics[width=2.25in]{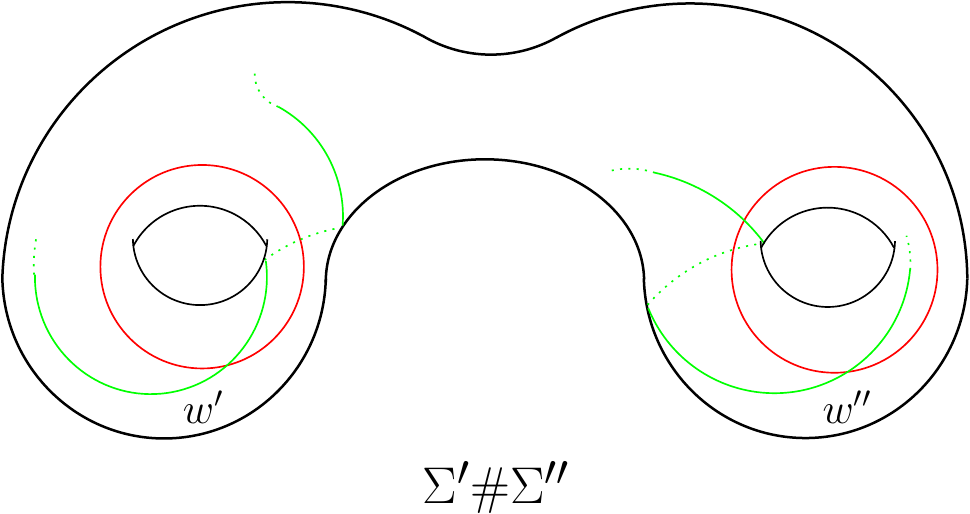} \\ (b)\\ \end{centering}} \quad\quad \parbox{2.25in}{\begin{centering}\includegraphics[width=2.25in]{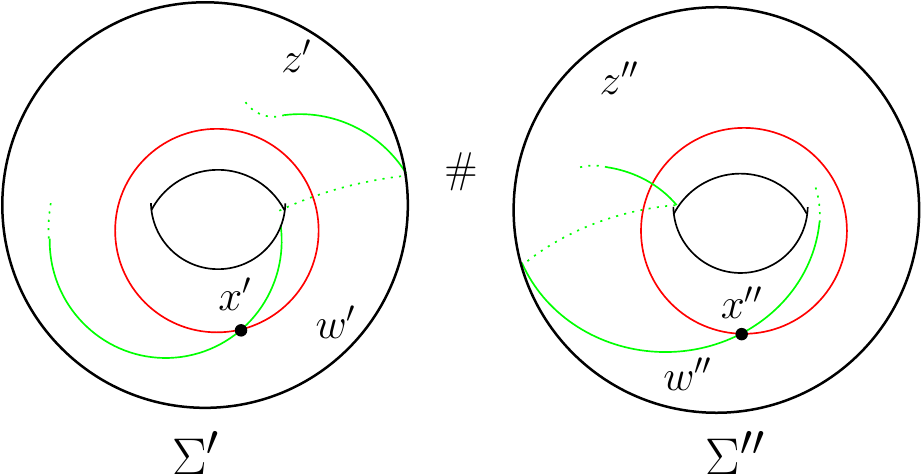}\\ (c) \\ \end{centering}}
\caption{\label{afigure} (a) shows an HKM diagram ($\alpha$ curves in red, $\beta$ in blue) associated to an open book with monodromy fixing a separating curve $\gamma_1$ (indicated, with other $\gamma$ curves, in green, parallel to $\beta$'s). Note that no $\alpha$ or $\beta$ curves except $\alpha_1$ and $\beta_1$ cross the grey curve $\phi(\gamma_1)$. The Heegaard diagram in (b) is obtained from the $\aalpha$ and $\ggamma$ curves in (a) after destabilizing by cancelling $\gamma_1$ with the $\alpha$ curve it hits, after possibly sliding some $\gamma$ curves over $\gamma_1$. This is the connected sum of the diagrams in (c) at the basepoints $z'$ and $z''$, which are not the standard basepoints in the HKM diagrams for the decomposed monodromies $\phi'$ and $\phi''$.}
\end{figure}

Now suppose $(S, \phi)$ is a reducible open book decomposition as previously, with $\gamma\subset S$ a separating curve fixed by the monodromy. Since $S$ has boundary on each side of $\gamma$, we can choose the arcs $a_1,\ldots, a_n$ in such a way that $a_1$ connects two different boundary components of $S$ and intersects $\gamma$ transversely in a single point, while the other $a_i$ are disjoint from $\gamma$. We obtain a new set of attaching circles $\gamma_1,\ldots,\gamma_n$ on $\Sigma = S_{1/2}\cup -S_0$ by taking $\gamma_1 = \gamma$ while the other $\gamma_i$ are (small Hamiltonian perturbations of) the corresponding $\beta_i$. Since our surgery is framed using the page, it is easy to see that $(\Sigma, \aalpha,\ggamma)$ is a Heegaard diagram for the open book surgery $Y_\gamma = Y'\#Y''$. In fact, $(\Sigma, \aalpha,\ggamma)$ is obtained by a single stabilization from a connected sum of HKM  diagrams corresponding to $(S', \phi')$ and $(S'', \phi'')$. Note, however, that some handleslides may be necessary in the destabilization, and in particular $(\Sigma, \aalpha, \ggamma)$ is not necessarily obtained by a connected sum of {\it pointed} HKM diagrams (c.f. Figure \ref{afigure}). We place basepoints $w'$ and $w''$ on either side of $\gamma_1$, so that $w'$ lies on the side of the diagram corresponding to $S'$.

The following generalizes Baldwin's theorem on capping off to the case of reducible open book surgery.

\begin{theorem}\label{naturalitythm} Assume that the reducible open book surgery corresponding to $\gamma\subset S\subset Y$ gives rise to open books $(S',\phi')$ and $(S'',\phi'')$ as above, and assume that $(S'',\phi'')$ is HKM strong. Then there exists a generator $\tilde{c}\in \hfhat(-Y'')$, and a \spinc structure $\s_0$ on the surgery cobordism $W_\gamma: -(Y'\#Y'')\to -Y$ such that 
\[
F_{W_\gamma, \s_0}(c(\xi')\otimes \tilde{c}) = c(\xi).
\]
\end{theorem}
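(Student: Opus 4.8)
The plan is to adapt Baldwin's argument for capping off to the present situation by working with the Heegaard diagram $(\Sigma,\aalpha,\ggamma)$ for $Y_\gamma = Y'\#Y''$ described just above, together with the HKM diagram $(\Sigma,\aalpha,\bbeta)$ for $Y$, and understanding the map $F_{W_\gamma,\s_0}$ via a triangle count in the triple diagram $(\Sigma,\aalpha,\bbeta,\ggamma)$. First I would observe that, because each $\gamma_i$ for $i\geq 2$ is a small Hamiltonian perturbation of $\beta_i$ and $\gamma_1 = \gamma$ is disjoint from all $\alpha_i$, $\beta_i$ with $i\geq 2$, the triple diagram is, away from $\gamma_1$, a small-perturbation diagram, so that there is a ``top'' intersection point $\Theta\in\mathbb{T}_\beta\cap\mathbb{T}_\gamma$ (the generator of $\hfhat$ of the connected sum of copies of $S^1\times S^2$ and the relevant lens space represented in the $\beta\gamma$-part). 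The cobordism map $F_{W_\gamma,\s_0}$ is then (up to the usual identifications) the map $\x\mapsto f_{\alpha\beta\gamma}(\x\otimes\Theta)$ counting holomorphic triangles, summed over triangles in the \spinc class $\s_0$. The key point, exactly as in Baldwin, is that the canonical generator $\x$ (which represents $c(\xi)$ in $(\Sigma,\bbeta,\aalpha)$, hence $c(\xi)$ in the reversed diagram) is connected to the analogous canonical generator $\y$ for $(\Sigma,\aalpha,\ggamma)$ by a distinguished small triangle supported near the arcs $a_i,b_i$, which carries no basepoint and contributes $1$; one must check there are no other triangles in that \spinc class, and here is where the HKM-strong hypothesis on $(S'',\phi'')$ enters.

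The role of \emph{HKM strong} is to control the $Y''$ side. Writing the generator $\y$ for $(\Sigma,\aalpha,\ggamma)$ as $\y'\otimes\y''$ under the (near-)connected-sum decomposition at the sphere coming from the torus swept out by $\gamma$, the class $\y'$ is the canonical HKM generator for $(S',\phi')$ and hence represents $c(\xi')\in\hfhat(-Y')$, while $\y''$ is \emph{a priori} only some generator of $\cfhat(-Y'')$ in the \spinc structure $\s_\xi''=\s_{\xi''}$; but HKM strength says precisely that this \spinc structure on $Y''$ contains a unique intersection point in the relevant HKM-type diagram for $Y''$, so $\y''$ is forced, it is a cycle, and its homology class—call it $\tilde c\in\hfhat(-Y'')$—is a well-defined generator of $\hfhat(-Y'',\s_\xi'')\cong\F$. (Note $\y''$ need not be the \emph{standard} HKM generator because of the handleslides in the destabilization, hence $\tilde c$ need not be a contact invariant on the nose, matching the remark in the introduction.) So the triangle count gives $F_{W_\gamma,\s_0}(c(\xi)) = [\y] = [\y'\otimes\y''] = c(\xi')\otimes\tilde c$ for the cobordism $W_\gamma: Y\to Y'\#Y''$; reversing orientation of the cobordism and of all three 3-manifolds turns this into the asserted identity $F_{W_\gamma,\s_0}(c(\xi')\otimes\tilde c)=c(\xi)$ for $W_\gamma: -(Y'\#Y'')\to -Y$, with $\s_0$ the conjugate \spinc structure.

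Concretely the steps are: (i) set up the triple diagram $(\Sigma,\aalpha,\bbeta,\ggamma)$ and identify $\Theta$ and the \spinc structure $\s_0$ as the one supporting the distinguished small triangle $\psi_0$ from $\x\otimes\Theta$ to $\y$; (ii) verify $n_w(\psi_0)=n_{w'}(\psi_0)=n_{w''}(\psi_0)=0$ so that $\psi_0$ is counted and contributes $1$; (iii) verify $\y$ is a cycle in $(\Sigma,\aalpha,\ggamma)$ and represents $c(\xi')\otimes\tilde c$ under the Künneth/connected-sum identification, using HKM strength of $(S'',\phi'')$ to pin down $\y''$ and $\tilde c$; (iv) show no other holomorphic triangle in the class $\s_0$ connects $\x\otimes\Theta$ to $\y$ with the right Maslov index, again leaning on the fact that the $\gamma$'s are small perturbations of the $\beta$'s except for $\gamma_1$, so the triangle count localizes and the uniqueness follows as in \cite{baldwincap} on the $Y'$ side and from HKM strength on the $Y''$ side; (v) reverse orientations and conjugate $\s_0$ to obtain the stated form. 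The main obstacle I expect is step (iv): ruling out extra triangles (equivalently, showing the map on the relevant summand is exactly multiplication by the distinguished generator and not something with lower-order corrections), since the diagram $(\Sigma,\aalpha,\ggamma)$ is only a stabilization of a connected sum \emph{after} handleslides, so one cannot quote the connected-sum formula verbatim and must instead track the effect of those handleslides on the triangle count—this is precisely the place where Baldwin's capping-off analysis must be genuinely extended rather than merely cited, and where the HKM-strong hypothesis is doing essential work by collapsing the $Y''$ factor to a single generator.
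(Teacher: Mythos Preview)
Your overall strategy (triple diagram, small triangle, K\"unneth identification) matches the paper's, but the proposal contains a genuine error in step (v) and misplaces the role of the HKM-strong hypothesis.

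\textbf{The direction of the map.} You set up the triangle count as $\x\mapsto f_{\alpha\beta\gamma}(\x\otimes\Theta)$, with input the HKM generator $\x$ on the $\alpha\beta$ side and output the generator $\y$ on the $\alpha\gamma$ side; you then propose to obtain the stated identity by ``reversing orientation of the cobordism and of all three 3-manifolds.'' This does not work. Reversing the orientation of a cobordism does not invert the induced map on Floer homology; at best one obtains an adjoint under the Poincar\'e pairing, and from $F(c(\xi)) = c(\xi')\otimes\tilde c$ one cannot conclude that any related cobordism map sends $c(\xi')\otimes\tilde c$ to $c(\xi)$. The paper avoids this entirely by setting up the triple $(\Sigma,\ggamma,\bbeta,\aalpha,w')$ as \emph{left-subordinate} to $W_\gamma:-(Y'\#Y'')\to -Y$, so that the triangle count (Proposition~\ref{triangleprop}) directly computes the map in the correct direction: input $\x_0\in\Tg\cap\Ta$, output $\x\in\Tb\cap\Ta$. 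The uniqueness argument then shows $F_{W_\gamma,\s_0}(\x_0)=\x$ with no reversal needed.

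\textbf{Where HKM-strong enters.} You invoke the HKM-strong hypothesis in step (iv) to rule out extra triangles. In the paper it plays no role there: uniqueness of the triangle (Proposition~\ref{triangleprop}) is obtained by a two-basepoint argument showing that $n_{w'}(\psi)=0$ together with the \spinc constraint forces $n_{w''}(\psi)=0$ as well (via the invariance of the quantity $f(\x_0)$ of Lemma~\ref{flemma}), after which Baldwin's combinatorics apply verbatim. The HKM-strong assumption is used only in the K\"unneth step (your step (iii), the paper's Lemma~\ref{kunnethlemma}), and there it does real work that your sketch underestimates: because the connected-sum decomposition of $(\Sigma,\ggamma,\aalpha)$ is \emph{not} at the HKM basepoint, one cannot simply quote the K\"unneth formula for $\cfhat$. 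The paper instead passes through knot Floer complexes $CFK^-$ with carefully chosen basepoint roles, verifies the K\"unneth triangle map sends $\x_0'\otimes\x_0''$ to $\x_0'\times x_1\times\x_0''$ (using HKM-strong to kill disks on the $\Sigma''$ side), and then specializes $U=1$. This is where the handleslides you flagged are actually handled, and it is the step that needs the hypothesis.
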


In fact, the class $\tc$ is represented by the canonical generator $\x''$ on the HKM diagram for $(Y'', \xi'')$, with basepoint $z''$ as in Figure \ref{afigure}.

The main idea in the proof is to consider holomorphic triangles in the triple diagram $(\Sigma, \ggamma, \bbeta, \aalpha)$, much like \cite{baldwincap}. Indeed, this triple diagram describes the natural 2-handle cobordism between $Y$ and $Y'\#Y''$, thinking of the cobordism as connecting $-(Y'\#Y'')$ to $-Y$. (As in \cite{baldwincap}, the diagram is ``left-subordinate'' to the cobordism; see \cite[Section 5.2]{HolDiskFour}.) By construction of the diagram, there is a small triangle $\psi_0\in \pi_2(\x_0,\theta,\x)$ admitting a unique holomorphic representative, where $\x\in \Tb\cap \Ta$ is the canonical (HKM) representative of $c(\xi)$, $\theta\in \Tg\cap \Tb$ is the standard highest-degree intersection point, and $\x_0 \in \Tg\cap \Ta$ is the intersection point given by the standard intersections between the $\gamma_i$ and $\alpha_i$ for $i\neq 1$, together with the unique intersection point in $\gamma_1\cap \alpha_1$. 

In the following we continue to assume $Y''$ is a rational homology sphere. 

\begin{lemma} After possibly adjusting the monodromy $\phi$ by an isotopy, the diagram $(\Sigma, \ggamma,  \bbeta, \aalpha, w')$ is weakly admissible in the sense that every triply-periodic domain with $n_{w'}=0$ has both positive and negative coefficients.
\end{lemma}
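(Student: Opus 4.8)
The plan is to run the standard ``winding'' argument for achieving weak admissibility --- the technique of \OnS, in the form used by Baldwin for capping off \cite{baldwincap} --- modified so that the isotopies we perform are isotopies of the monodromy $\phi$. Recall that the triply-periodic domains of $(\Sigma,\ggamma,\bbeta,\aalpha)$ form a finitely generated free abelian group $\Pi$, and that the pointed triple diagram is weakly admissible with respect to $w'$ exactly when no nonzero $\mathcal{P}\in\Pi$ with $n_{w'}(\mathcal{P})=0$ has all of its local multiplicities nonnegative. The first reduction is to discard the ``obvious'' part of $\Pi$: for each $i\geq 2$ the curve $\gamma_i$ is a small Hamiltonian translate of $\beta_i$, so there is a periodic domain $P_i$ supported in a thin region near $\gamma_i\cup\beta_i$ with $\partial P_i=\gamma_i-\beta_i$, $n_{w'}(P_i)=0$, and visibly attaining both signs. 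Hence, after subtracting an appropriate combination of the $P_i$, it suffices to arrange that every nonzero $\mathcal{P}\in\Pi$ whose boundary is a $\Z$-combination of the $\alpha_i$, the $\beta_i$, and $\gamma_1$ only, and with $n_{w'}(\mathcal{P})=0$, attains both signs.

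Next I would set up the winding so that it is realized by an isotopy of $\phi$. Writing $\Sigma=S_{1/2}\cup(-S_0)$ as in the construction, both $w'$ and the canonical generator $\x$ lie in the half $S_{1/2}$. An isotopy of $\phi$ rel $\partial S$ amounts to applying an ambient isotopy $h$ of $S$ to the $S_0$-halves of the $\alpha$-curves, the $\beta$-curves, and the $\gamma_i$ with $i\geq 2$, fixing everything in $S_{1/2}$; up to an isotopy of the Heegaard diagram supported in $-S_0$ (which moves neither $w'$ nor $\x$), the result is the HKM diagram for $h\circ\phi$. So we are free to wind the $S_0$-portions of the $\alpha$- and $\beta$-curves along any system of simple closed curves $c_1,\dots,c_\ell$ lying in the $S_0$-half of $\Sigma$ together with a collar of the binding, and such curves automatically avoid $w'$ and $\x$. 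I would choose the $c_j$ so that their homology classes generate the image in $H_1(\Sigma)$ of the boundaries of the reduced periodic domains above, handling any periodic domain whose boundary involves $\gamma_1$ by winding the $S_0$-portion of $\alpha_1$, the unique $\alpha$-curve that $\gamma_1$ meets.

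With the winding curves in place the rest is routine: winding the appropriate curve $N$ times around $c_j$ forces any reduced $\mathcal{P}\in\Pi$ whose boundary pairs nontrivially with $[c_j]$ to acquire, in the winding region, a run of local multiplicities in arithmetic progression with nonzero common difference and length growing with $N$, hence both signs once $N$ is large; a reduced $\mathcal{P}$ pairing trivially with every $[c_j]$ has null-homologous boundary, so by the choice of the $c_j$ it was already a combination of the $P_i$. As $\Pi$ has finite rank, finitely many conditions must be met, so a single $N$ --- a single isotopy of $\phi$ --- suffices, and the standing hypothesis that $Y''$ is a rational homology sphere helps limit the part of $\Pi$ coming from the $S''$-side. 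The step I expect to be the main obstacle is the middle one: checking that a set of winding curves adequate to break \emph{all} the reduced triply-periodic domains can simultaneously be pushed into the $S_0$-half of $\Sigma$ away from $w'$, and organizing the bookkeeping for those domains whose boundary contains the reducing curve $\gamma_1$, which, unlike the other $\gamma_i$, is not a small translate of any $\beta_i$.
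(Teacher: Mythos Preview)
Your proposal has a concrete error in the winding step, and it misses the structural argument that the paper actually uses.

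The error: an isotopy of the monodromy, replacing $\phi$ by $h\phi$, changes the HKM diagram only by replacing $\phi(b_i)$ with $h\phi(b_i)$ on the $-S_0$ half. The $\alpha$-curves $\alpha_i = a_i\cup a_i$ do not involve $\phi$ and are \emph{not} moved. Your phrase ``up to an isotopy of the Heegaard diagram supported in $-S_0$'' is doing precisely the work of undoing the ambient $h$ on the $\alpha$-curves, so the net effect is winding of the $\beta$-curves (and the $\gamma_i$, $i\geq 2$, which follow them) only. You are therefore \emph{not} free to wind $\alpha_1$, and this was exactly how you proposed to handle periodic domains whose boundary involves $\gamma_1$. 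So the heart of your plan---the step you yourself flagged as the main obstacle---does not go through.

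The paper takes a different and more structural route. Rather than a generic winding argument, it analyzes what a nonnegative triply-periodic domain $\P$ with $n_{w'}(\P)=0$ must look like. First, Baldwin's combinatorial argument (which you cite but never apply) shows that no curve on the $S'$-side can appear in $\partial\P$. One then cuts $\Sigma$ along $\gamma_1$ and $\phi(\gamma_1)$; on the $H''$ piece, regluing the two boundary components produces a genuine Heegaard triple for surgery on a knot $K''\subset Y''$, and $\P$ induces a triply-periodic domain there. The hypothesis $b_1(Y'')=0$ now enters decisively: the triply-periodic domains in such a surgery diagram are spanned by one with $\beta_1$ appearing in its boundary with multiplicity equal to the order of $K''$ in $H_1(Y'')$, so the jump $m$ of $\P$ across $\beta_1$ (near $\gamma_1$) must be nonzero. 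That same jump $m$ persists across $\beta_1$ near $\phi(\gamma_1)\subset -S_0$, and therefore a single winding---of $\beta_1$ around $\phi(\gamma_1)$, achievable by isotopy of $\phi$---forces coefficients of both signs. In short, the paper identifies exactly one $\beta$-curve to wind around one specific curve, and the rational-homology-sphere hypothesis is used precisely to guarantee $m\neq 0$; your proposal only gestures at this and never isolates the role of $\beta_1$.
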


\begin{proof} Suppose $\P$ is a nonnegative triply periodic domain in $(\Sigma, \ggamma,\bbeta,\aalpha, w')$, i.e., a nonnegative integer linear combination of regions between the attaching circles, excluding the region containing $w'$, whose boundary (as a chain) is a linear combination of $\alpha$, $\beta$, and $\gamma$ circles. We write the circles as $\aalpha = \aalpha' \cup \alpha_1\cup \aalpha''$, where the primes refer to the side of the diagram containing the curves, and similarly for $\bbeta$ and $\ggamma$. Since $n_{w'}(\P) = 0$, the combinatorial arguments from \cite[proof of Lemma 2.2]{baldwincap} show that none of the circles $\aalpha'$, $\bbeta'$, $\ggamma'$ appear in $\partial\P$. 

Consider the surface with boundary $H$ obtained from $\Sigma$ by cutting along $\gamma_1$ and $\phi(\gamma_1)\subset -S_{0}$, so $H$ is a disjoint union $H = H' \cup H''$ corresponding to the two sides of our diagram. Write $m$ for the coefficient of $\P$ in the ``small'' region between $\alpha_1$ and $\beta_1$ in $H'$ near $\gamma_1 \subset \partial H'$. 

\begin{figure}[b]
\includegraphics[width=3in]{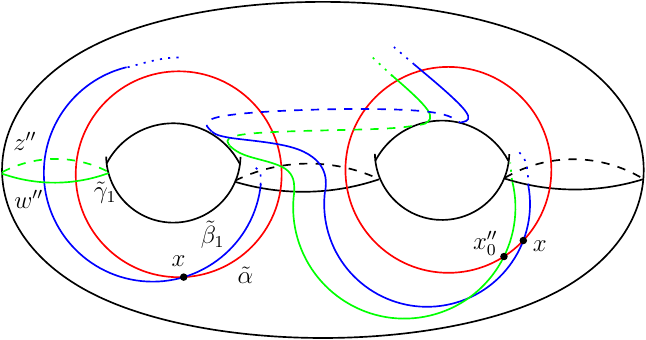}
\caption{\label{admissfig}The Heegaard triple obtained by cutting the diagram of Figure \ref{afigure} along $\gamma_1$ and $\phi(\gamma_1)$ and identifying boundary components. Shown is the portion corresponding to the right side of the diagram, $H''$.}
\end{figure}

By identifying the two boundary components of $H''$, we obtain a Heegaard diagram $(\widetilde{\Sigma}, \tilde\ggamma, \tilde\bbeta,\tilde\aalpha)$ as in Figure \ref{admissfig}, and $\P$ gives rise to a domain in this diagram. Observe that the new diagram is a Heegaard triple diagram describing surgery along a knot $K''$ in $Y''$ whose meridian corresponds to $\tilde\gamma_1$ and longitude is $\tilde\beta_1$. The set of triply-periodic domains in such a diagram is spanned by a domain for which the longitude $\beta_1$ appears in the boundary with coefficient equal to the order in first homology of $K''$, if that order is finite (c.f. the discussion in Section \ref{cobordismssec}). In our diagram the coefficient of $\beta_1$ in the boundary of $\P$ is just $m$, up to sign. Since we are assuming $b_1(Y'') = 0$, it follows that for any nontrivial, nonnegative periodic domain in the original diagram we must have $m \neq 0$.

Consider the portion of the diagram near $\phi(\gamma_1)\subset \partial H'$. Here it still must be the case that the coefficients of $\P$ differ by $m$ across each of $\alpha_1$ and $\beta_1$. Therefore we can ensure that a nonnegative triply periodic domain does not exist by introducing winding, in the sense of \cite[Section 5]{OS1}, of $\beta_1$ around the curve $\phi(\gamma_1)$, in both directions; this can be achieved by isotopy of the monodromy $\phi$ and thus still corresponds to an HKM diagram.
\end{proof}

The following refers to the triple diagram $(\Sigma, \ggamma,\bbeta,\aalpha, w', w'')$ corresponding to reducible open book surgery (Figure \ref{afigure}(a)).

\begin{prop}\label{triangleprop} For $\y \in \Tb\cap\Ta$, let $\psi\in\pi_2(\x_0,\theta,\y)$ be a homotopy class of Whitney triangles such that
\begin{enumerate}
\item $n_{w'}(\psi) = 0$
\item $\s_{w'}(\psi) = \s_{w'}(\psi_0)$
\item $\psi$  admits a holomorphic representative.
\end{enumerate}
Then $\y = \x$, and $\psi = \psi_0$.
\end{prop}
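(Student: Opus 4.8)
The plan is to analyze the combinatorics of the triple diagram $(\Sigma, \ggamma, \bbeta, \aalpha)$ by comparing a hypothetical triangle $\psi$ with the small triangle $\psi_0$, reducing everything to the two sides $H'$ and $H''$ obtained by cutting along $\gamma_1$ and $\phi(\gamma_1)$. Since $\psi_0 \in \pi_2(\x_0, \theta, \x)$ and $\psi \in \pi_2(\x_0, \theta, \y)$ share the same first two vertices, the difference $\psi - \psi_0$ is (the domain of) a class in $\pi_2(\x, \y)$ together with possibly a periodic domain. First I would use condition (2), $s_{w'}(\psi) = s_{w'}(\psi_0)$, to conclude that $\psi - \psi_0$ represents a sum of a Whitney disk in $\pi_2(\x, \y)$ and a doubly/triply periodic domain with $n_{w'} = 0$; condition (1) then forces $n_{w'}$ of this difference domain to vanish. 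The admissibility lemma just proved shows any triply periodic domain with $n_{w'} = 0$ has mixed signs, so the only freedom left after accounting for a genuine Whitney disk class in $\pi_2(\x,\y)$ is controlled.

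The heart of the argument is the local picture near $\gamma_1$, exactly as in Baldwin's capping-off proof. By construction, only $\alpha_1$ and $\beta_1$ (and $\gamma_1$) cross the grey curve $\phi(\gamma_1)$, so on the $S'$ side the canonical HKM generator $\x$ forces all the local coefficients of the domain of $\psi_0$ to be determined by the small triangle. I would argue that for the domain of $\psi$ to have only nonnegative coefficients (condition (3)) while matching $s_{w'}(\psi_0)$ and having $n_{w'} = 0$, the coefficients near $\gamma_1$ must agree with those of $\psi_0$; any deviation would introduce a negative region, by the same ``no room to maneuver'' combinatorics that appear in the admissibility lemma (the coefficients jump by a fixed amount across $\alpha_1$ and $\beta_1$, and a nonnegative domain with $n_{w'}=0$ pins these jumps). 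This localizes $\y$: the $\beta_1$-component of $\y$ must coincide with that of $\x$, and the domain restricted to a neighborhood of $\gamma_1$ must be the small triangle of $\psi_0$.

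Having matched $\psi$ and $\psi_0$ near $\gamma_1$, I would then transfer the problem to the closed-up diagrams $(\widetilde\Sigma', \tilde\ggamma, \tilde\bbeta, \tilde\aalpha)$ and $(\widetilde\Sigma'', \tilde\ggamma, \tilde\bbeta, \tilde\aalpha)$ for the two decomposed open books $(S', \phi')$ and $(S'', \phi'')$, where (after the destabilization cancelling $\gamma_1$ against $\alpha_1$) the curves $\ggamma$ and $\bbeta$ become the standard HKM $\aalpha$- and $\bbeta$-curves. On each side the restriction of $\psi$ becomes a nonnegative triangle with two of its vertices equal to the canonical highest-degree intersection point and the canonical HKM generator, respectively, and $\s_{w}$ fixed; the standard rigidity of the small HKM triangle (as used for the unperturbed invariant, e.g.\ in \cite{HKM,baldwincap}) then forces the third vertex to be $\x'$ on the $S'$ side and $\x''$ on the $S''$ side --- here is exactly where the HKM-strong hypothesis on $(S'',\phi'')$ enters, guaranteeing there is no competing intersection point in the relevant \spinc structure. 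Assembling the two sides gives $\y = \x$ and, since the domain is determined side-by-side and near $\gamma_1$, $\psi = \psi_0$.

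The main obstacle I anticipate is the bookkeeping in the destabilization: the diagram $(\Sigma, \aalpha, \ggamma)$ is obtained from the connected sum of HKM diagrams only after a stabilization and possibly several handleslides of $\gamma$-curves over $\gamma_1$, so the basepoints $z'$, $z''$ are \emph{not} the standard HKM basepoints, and one must check carefully that these handleslides do not disturb either the identification of $\x$ as the canonical generator or the nonnegativity analysis near $\gamma_1$. Controlling how periodic domains and the $\s_{w'}$-grading behave under these moves --- and confirming that the winding introduced for admissibility does not create spurious nonnegative triangles --- is the delicate part; everything else follows Baldwin's template closely.
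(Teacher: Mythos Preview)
Your proposal misses the paper's key observation and, as a result, takes a detour that introduces an unnecessary hypothesis. The paper's proof is very short: the crucial first step is to show that any triangle $\psi$ satisfying (1)--(3) automatically has $n_{w''}(\psi) = 0$ as well. This is deduced from the fact (Lemma~\ref{flemma}) that the quantity
\[
f(\x_0) = \langle c_1(s_{w'}(\psi)), [F_\lambda]\rangle + [F_\lambda]^2 - 2\bigl(n_{w'}(\psi) - n_{w''}(\psi)\bigr)
\]
depends only on the corner $\x_0$, not on the particular triangle $\psi$. Since $s_{w'}(\psi) = s_{w'}(\psi_0)$ by hypothesis (2), comparing $f(\x_0)$ computed via $\psi$ and via $\psi_0$ gives $n_{w'}(\psi) - n_{w''}(\psi) = n_{w'}(\psi_0) - n_{w''}(\psi_0) = 0$, and then (1) forces $n_{w''}(\psi) = 0$. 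Once the domain of $\psi$ vanishes in \emph{both} large regions of the diagram (those containing $w'$ and $w''$), Baldwin's combinatorial argument from \cite[Proposition~2.3]{baldwincap} applies verbatim on each side and yields $\y = \x$, $\psi = \psi_0$. There is no destabilization, no handleslide bookkeeping, and no analysis of closed-up diagrams for $(S',\phi')$ or $(S'',\phi'')$ in this step.

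In particular, the HKM-strong hypothesis on $(S'',\phi'')$ is \emph{not} used here, and it is not among the hypotheses of the proposition. You invoke it to rule out competing intersection points on the $S''$ side, but once $n_{w''}(\psi) = 0$ is known, vanishing of the domain in the $w''$-region already pins down the $S''$-side combinatorics via Baldwin's argument, with no appeal to uniqueness of generators in a \spinc class. (HKM-strong enters the paper only in Lemma~\ref{kunnethlemma}, to identify the homology class represented by $\x_0$ under the K\"unneth isomorphism.) Your attempt to replace the $n_{w''}=0$ step with a direct ``no room to maneuver'' analysis near $\gamma_1$ is the source of the difficulty: without first controlling the coefficient at $w''$, nonnegativity and $n_{w'}=0$ alone do not prevent the domain of $\psi$ from being large on the $S''$ side, and it is precisely this possibility that the $f$-function argument eliminates in one stroke.
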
 

In fact, the argument applies to triangles satisfying (1) and (2) and whose corresponding domain in $\Sigma$ has only nonnegative coefficients, which is true if $\psi$ admits a holomorphic representative.

\begin{proof} First we claim that any such $\psi$ also has $n_{w''}(\psi) = 0$. For this recall that the quantity $f(\x_0) = \langle c_1(\s_{w'}(\psi)), [F_\lambda]\rangle + [F_\lambda]^2 - 2(n_{w'}(\psi) - n_{w''}(\psi))$ depends only on the intersection point $\x_0$ and not, particularly, on the triangle $\psi\in \pi_2(\x_0,\theta,\y)$.  (See Lemma \ref{flemma}; here  $[F_\lambda]$ is the generator of $H_2(W, -(Y'\#Y''))$ represented by the core of the 2-handle.) If $\s_{w'}(\psi) = \s_{w'}(\psi_0)$ then the only term of $f(\x_0)$ that depends on $\psi$ is $n_{w'}(\psi) - n_{w''}(\psi)$. In particular since $n_{w'}(\psi_0) = n_{w''}(\psi_0) = 0$, the assumption $n_{w'}(\psi) =0$ forces $n_{w''}(\psi) =0$ as well. 

Hence the triangle $\psi$ must have vanishing coefficients in both ``large'' regions of the diagram $(\Sigma, \ggamma, \bbeta, \aalpha)$. The combinatorial arguments from \cite[Proposition 2.3]{baldwincap} now apply directly to give the conclusion.
\end{proof}

The proposition implies that an appropriately-defined chain map between $(\Sigma, \ggamma,\aalpha,w')$ and $(\Sigma, \bbeta,\aalpha, w')$ sends the generator $\x_0$ to the canonical representative $\x$ of the contact invariant $c(\xi)$, since there is just one homotopy class of triangle to consider and that homotopy class admits a unique holomorphic representative (we define the relevant chain map below). Moreover, if we write $\x_0 = \x_0' \times x_1\times \x_0''$, where $x_1\in \gamma_1\times \alpha_1$ is the unique intersection as before, then after destabilizing the diagram the intersections $\x_0'$ and $\x_0''$ are the canonical intersection points in HKM diagrams for $-Y'$ and $-Y''$.

\begin{lemma}\label{kunnethlemma} Assume that the diagram $(\Sigma'', \ggamma'', \aalpha'', w'')$ is HKM strong for the contact structure $\xi''$ supported by $(S'', \phi'')$. Then the intersection point $\x_0$ is a cycle in the chain complex $\cfhat(\Sigma,\ggamma,\aalpha, w')$. Moreover, under the K\"unneth isomorphism
\[
\hfhat(Y'\#Y'') \cong \hfhat(Y')\otimes \hfhat(Y''),
\]
the class $[\x_0]$ corresponds to $c(\xi)\otimes \tilde{c}$, where $\tilde{c}$ is the class represented by the intersection point $\x_0''$ in the diagram $(\Sigma'', \ggamma'', \aalpha'', z'')$.
\end{lemma}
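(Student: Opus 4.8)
The statement to prove is Lemma~\ref{kunnethlemma}: under the hypothesis that $(\Sigma'', \ggamma'', \aalpha'', w'')$ is HKM strong for $\xi''$, the intersection point $\x_0$ is a cycle in $\cfhat(\Sigma,\ggamma,\aalpha, w')$, and under the K\"unneth isomorphism $[\x_0]$ corresponds to $c(\xi')\otimes \tilde{c}$, with $\tilde c = [\x_0'']$. The approach is to reduce everything to a connected sum of (appropriately basepointed) Heegaard diagrams for $-Y'$ and $-Y''$ via the destabilization already discussed in the text, then identify the two factors separately.

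\smallskip\noindent\textbf{Step 1: Destabilize and decompose the diagram.} Recall from the discussion preceding Figure~\ref{a figure} that $(\Sigma, \aalpha, \ggamma)$ is obtained by a single stabilization from a connected sum, at basepoints $z'$ and $z''$, of two diagrams $\cD'$ and $\cD''$ describing $-Y'$ and $-Y''$; the stabilization cancels $\gamma_1$ against $\alpha_1$ (possibly after handleslides of other $\ggamma$ curves over $\gamma_1$). Destabilization is a chain homotopy equivalence; crucially, the canonical generator $\x_0 = \x_0'\times x_1\times\x_0''$ maps to $\x_0'\times\x_0''$ under destabilization (the $x_1$ factor is the cancelled pair), and $\x_0'$, $\x_0''$ are by construction the canonical HKM generators in $\cD'$ and $\cD''$ respectively. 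So it suffices to prove the two assertions in the destabilized connected-sum diagram $\cD'\#_{z',z''}\cD''$ with basepoint the image of $w'$ (which lies on the $\cD'$ side, near $z'$).

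\smallskip\noindent\textbf{Step 2: The cycle claim, factor by factor.} In the connected sum $\cD'\#\cD''$, any differential out of $\x_0'\times\x_0''$ counts a disk whose domain decomposes as a domain on the $\cD'$ side plus a domain on the $\cD''$ side (standard connected-sum analysis of holomorphic disks, as in \OnS's K\"unneth theorem). On the $\cD''$ side, $\x_0''$ is the HKM generator: by \cite{HKM} it is a cycle in the \emph{pointed} HKM diagram, but here $\cD''$ carries the nonstandard basepoint $z''$ rather than the HKM basepoint. This is exactly where the HKM-strong hypothesis enters: since $\x_0''$ is the only intersection point in its \spinc\ structure, there is simply nothing for a differential to hit, so $\x_0''$ is a cycle in $\cfhat(\cD'')$ regardless of basepoint, and moreover represents the generator of $\hfhat(-Y'',\s_{\xi''})\cong\F$. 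On the $\cD'$ side, $\x_0'$ is the canonical HKM generator and $w'$ is placed on the $\cD'$ side away from the bigons between the $a_i$'s and $b_i$'s (the $\ggamma'$ curves being Hamiltonian perturbations of the $\bbeta'$ curves), so the HKM argument of \cite{HKM} applies verbatim: $\x_0'$ is a cycle in $\cfhat(\Sigma', \ggamma', \aalpha', w')$ and represents $c(\xi')$. Combining: $\x_0'\times\x_0''$ is a cycle, and under the K\"unneth isomorphism $\hfhat(-Y'\#{-Y''})\cong\hfhat(-Y')\otimes\hfhat(-Y'')$ its class is $[\x_0']\otimes[\x_0''] = c(\xi')\otimes\tilde c$.

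\smallskip\noindent\textbf{Step 3: Bookkeeping the basepoint and the \spinc\ decomposition.} One must check that moving from the HKM basepoint in $\cD''$ to $z''$ does not corrupt the identification $[\x_0''] = \tilde c$ --- but this is immediate once $\tilde c$ is \emph{defined} as the class of $\x_0''$ in $(\Sigma'',\ggamma'',\aalpha'',z'')$ (as the final sentence of the lemma statement does), so there is nothing further to verify there. One also checks that the connected-sum basepoint relating $\cD'\#\cD''$ to the \emph{single}-basepoint complexes is handled correctly, i.e.\ that the K\"unneth map is the one induced by counting disks with domains supported away from the connected-sum region; this is standard. \textbf{The main obstacle} is Step~2 on the $\cD''$ side: an HKM diagram for $(S'',\phi'')$ with the \emph{standard} basepoint would make $\x_0''$ a cycle by \cite{HKM}, but our diagram has the wrong basepoint $z''$, so the usual HKM cycle argument does not apply directly. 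The HKM-strong hypothesis is precisely what rescues this --- with $\x_0''$ alone in its \spinc\ class, the cycle property is automatic and basepoint-independent --- and making this reduction clean (including verifying that $\x_0''$ genuinely lies alone in its \spinc\ structure for the destabilized diagram, not just the stabilized one) is the crux of the argument. Everything else is an application of the connected-sum formula and the original \cite{HKM} computation.
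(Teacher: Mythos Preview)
Your argument has a genuine gap, and it is precisely the one the paper flags as ``the main difficulty.'' The standard K\"unneth theorem for $\cfhat$ applies when the single basepoint in the connected-sum diagram sits at the neck where $\Sigma'$ and $\Sigma''$ are joined: that is what forces holomorphic disks not to cross the neck and gives the tensor-product splitting of the chain complex. In your situation the connected sum is taken at $z'$ and $z''$, but the basepoint in $(\Sigma,\ggamma,\aalpha,w')$ is $w'$, which lies strictly on the $\Sigma'$ side. Disks are therefore free to pass through the neck, and the differential on $\cfhat(\Sigma'\#\Sigma'',w')$ does \emph{not} split as $\partial'\otimes 1 + 1\otimes\partial''$. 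Your Step~2 sentence ``standard connected-sum analysis of holomorphic disks, as in \OnS's K\"unneth theorem'' and your Step~3 ``this is standard'' both invoke exactly the hypothesis that fails here. Even granting the cycle claim (which can be salvaged by a direct domain argument, since the HKM combinatorics on the $\Sigma'$ side forces any nonnegative domain out of $\x_0'$ avoiding $w'$ to vanish, hence the neck multiplicity is zero, hence HKM-strong finishes the $\Sigma''$ side), you have not produced a chain-level map realizing the K\"unneth isomorphism that simultaneously (i) has source computed with basepoint $w'$, (ii) lands in $\hfhat(\Sigma',\ggamma',\aalpha',w')\otimes\hfhat(\Sigma'',\ggamma'',\aalpha'',z'')$, and (iii) carries $\x_0'\times\x_0''$ to $\x_0'\otimes\x_0''$.

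The paper's remedy is to pass to knot Floer homology $CFK^-$, which tracks \emph{two} basepoints via the variable $U$: one basepoint is avoided and the other is weighted by $U^{n}$. The K\"unneth theorem for $CFK^-$ (from \cite{OSknot,OSlink}) then applies with the neck points tracked rather than avoided, the isomorphism is realized by an explicit triangle count in which the HKM-strong hypothesis pins down the image of $\x_0'\otimes\x_0''$, and only at the end does one set $U=1$ to recover $\cfhat$ with basepoint $w'$. Without some device of this kind your Step~3 does not go through.
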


Observe that since the diagram is HKM strong, the generator $\x_0''$ is a cycle in $\cfhat$ regardless of the position of the basepoint. It is, however, dealing with the basepoints that is the main difficulty in the proof of the lemma. To do so we make use of some technology from knot Floer theory. 

Recall that if $(\Sigma, \aalpha,\bbeta, w, z)$ is a doubly-pointed Heegaard diagram describing a knot $K\subset Y$, the knot Floer chain complex $CFK^-(Y, K) = CFK^-(\Sigma,\aalpha,\bbeta,w,z)$ is generated over $\F[U]$ by intersection points in the usual way, with differential counting holomorphic disks that miss the basepoint $z$ and keeping track of intersections with $w$ via the power of $U$. Explicitly:
\[
\partial^-_K\x = \sum_{n_z(\phi) = 0} \#\M(\phi) U^{n_w(\phi)}\y.
\]
Here the sum is over intersection points $\y$ and Whitney disks $\phi\in \pi_2(\x,\y)$ having Maslov index 1. There is a chain map $CFK^-(\Sigma,\aalpha,\bbeta, w,z)\to \cfhat(\Sigma, \aalpha,\bbeta, z)$ given by setting $U = 1$ (i.e., forgetting the basepoint $w$ and declaring $z$ to be the new basepoint). In the following, we will need to treat the basepoints we have been labeling as $w'$, $w''$, $z'$, $z''$ in different ways in the knot complex. To avoid confusion, we will use notation such as $CFK^-(\Sigma', \ggamma',\aalpha', n_{w'}= 0, U^{n_{z'}})$ to indicate the knot Floer complex in which $w'$ plays the role of $z$ above and $z'$ corresponds to $w$.

\begin{proof}[Proof of Lemma \ref{kunnethlemma}] Consider the two Heegaard diagrams $(\Sigma',\ggamma', \aalpha', w',z')$ and $(\Sigma'', \ggamma'', \aalpha'', w'', z'')$ of Figure \ref{afigure}(c), and the corresponding tensor product of knot Floer complexes
\[
CFK^-(\Sigma', \ggamma', \aalpha', n_{w'} = 0, U^{n_{z'}}) \otimes_{\zee[U]} 
CFK^-(\Sigma'', \ggamma'', \aalpha'', n_{z''} = 0, U^{n_{w''}}).
\]
According to \cite[Section 7]{OSknot} (see also \cite[Section 11]{OSlink}), there is an isomorphism of this complex with the complex $CFK^-(\Sigma'\#\Sigma'', \ggamma'\cup\ggamma'', \aalpha'\cup\aalpha'', n_{w'} = n_{z''} = 0, U^{n_{z'}+n_{w''}})$, obtained as follows. First form the triple diagram $(\Sigma'\#\Sigma'', \ggamma'\cup\ggamma'', \tilde{\aalpha}' \cup\tilde{\ggamma}'', \aalpha'\cup\aalpha'', w', w'')$, where the tildes indicate small Hamiltonian perturbation, as in Figure \ref{kunnethfigure}. Denoting by $\theta'$ (resp. $\theta''$) the canonical intersections between $\aalpha'$ and $\tilde{\aalpha}'$ (resp. $\ggamma''$ and $\tilde{\ggamma}''$), the image of $\x'\otimes \x''$ from the tensor product complex is obtained by counting holomorphic triangles in the triple diagram, having vanishing multiplicity at $w'$ and with corners at $\tilde{\x}'\times \theta''$ and $\theta'\times\tilde{\x}''$, where $\tilde{\x}'\in \ggamma'\cap \tilde{\aalpha}'$ and $\tilde{\x}'' \in \aalpha''\cap\tilde{\ggamma}''$ are the obvious intersection points corresponding to $\x'$ and $\x''$. We observe:
\begin{itemize}
\item There is a ``small triangle'' $\psi_0\in \pi_2(\tilde{\x}_0'\times \theta'', \theta'\times \tilde{\x}_0'', \x_0'\times\x_0'')$ admitting a unique holomorphic representative.
\item Any other triangle $\psi\in \pi_2(\tilde{\x}_0'\times \theta'', \theta'\times \tilde{\x}_0'', \U'\times\U'')$ with $n_{w'}(\psi) = 0$ and with nonnegative coefficients is actually equal to $\psi_0$.
\end{itemize}
Indeed, the first of these is clear from the diagram. For the second we adapt the argument from \cite[Section 7]{LOSS}: first note that any triangle $\psi$ as in the claim differs from $\psi_0$ by splicing a disk $\phi\in\pi_2(\x_0'\times \x_0'', \U'\times\U'')$ with nonnegative coefficients, boundary on $\Ta$ and $\Tg$, and $n_{w'}(\phi) = 0$. Clearly such a disk must have vanishing coefficients on the $\Sigma'$ side of the diagram, and in particular we must have $\U' = \x_0'$. But then $\phi$ gives rise to a disk starting from $\x_0''$ supported on $\Sigma''$, which is necessarily trivial by the HKM strong condition. 

\begin{figure}
\includegraphics[width=2.5in]{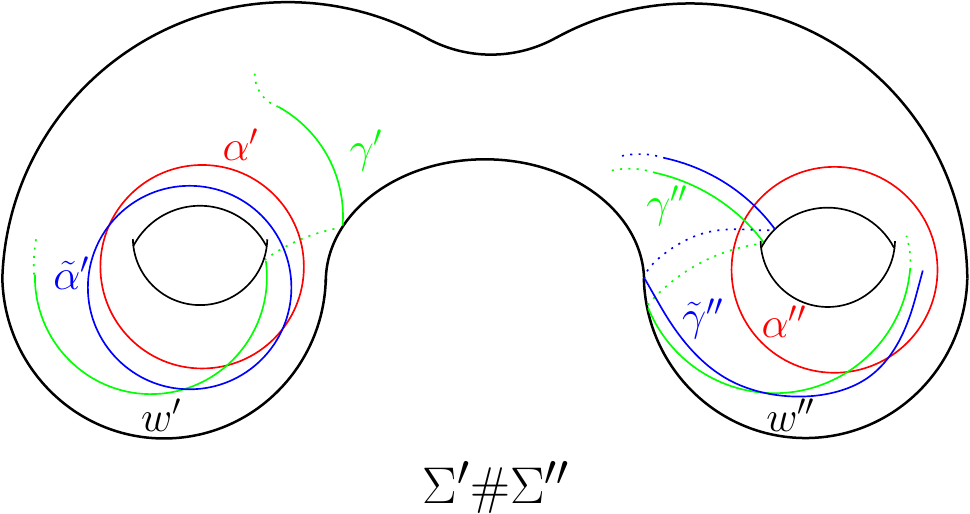}
\caption{\label{kunnethfigure}Heegaard triple for the K\"unneth argument.}
\end{figure}

To pass from the connected sum diagram to the HKM diagram $(\Sigma, \ggamma,\aalpha, w', w'')$, we must stabilize and then slide some $\gamma$ circles as necessary (reversing the transition from Figure \ref{afigure}(a) to (b)). Stabilization (at $w'$) certainly maps $\x_0'\times \x_0'' \mapsto \x_0'\times x_1\times \x''_0$; the maps induced by handleslides involve counts of holomorphic triangles that also have the desired behavior by an argument entirely similar to the one above. We leave the reader to fill in the details.

We have now seen that there is a chain isomorphism 
\begin{eqnarray*}
&CFK^-(\Sigma', \ggamma', \aalpha', n_{w'} = 0, U^{n_{z'}}) \otimes_{\zee[U]} 
CFK^-(\Sigma'', \ggamma'', \aalpha'', n_{z''} = 0, U^{n_{w''}})&\\ &\hspace*{3em}\to CFK^-(\Sigma, \ggamma,\aalpha,n_{w'} = 0, U^{n_{w''}})&
\end{eqnarray*}
such that the chain $\x_0'\otimes \x_0''$ is sent to $\x_0'\times x_1\times \x_0''$. Moreover, it is clear that both $\x_0'$ and $\x_0''$ are cycles in the respective factors on the left side ($\x_0'$ by virtue of the placement of the basepoint $w'$, and $\x_0''$ in light of the HKM strong condition). Applying the natural transformation $CFK^-\to \cfhat$ we get a homology isomorphism
\[
\hfhat(\Sigma', \ggamma',\aalpha', w')\otimes \hfhat(\Sigma'',\ggamma'',\aalpha'', z'')\to \hfhat(\Sigma,\ggamma,\aalpha, w')
\]
mapping $[\x_0']\otimes[\x_0'']$ to $[\x_0'\times x_1\times \x_0'']$, where the latter class is well-defined since  $CFK^-\to \cfhat$ is a chain map.
\end{proof}

\begin{proof}[Proof of Theorem \ref{naturalitythm}] The triple diagram $(\Sigma, \ggamma, \bbeta,\aalpha, w')$ is left-subordinate to the surgery cobordism $W_\gamma$. Writing $\s_0$ for the \spinc structure induced by the small triangle $\psi_0$, we see from Proposition \ref{triangleprop} that the corresponding chain map given by
\[
F_{W_\gamma,\s_0}(\y) = \sum_{\psi\in\pi_2(\y,\theta,\x)} \#\M(\psi) \x
\]
(where the sum is over homotopy classes of triangles with Maslov index 0, $\s_{w'}(\psi) = \s_{w'}(\psi_0)$, and $n_{w'}(\psi) = 0$) carries $\x'\times x_1\times\x''$ to the HKM generator $\x$ in $(\Sigma, \bbeta,\aalpha,w')$, which is a cycle representing $c(\xi)$. The theorem then follows from Lemma \ref{kunnethlemma}.
\end{proof}

\section{Positive Contact Surgery}\label{surgerysec}

We consider positive contact surgery along a nullhomologous Legendrian knot $K\subset (Y,\xi)$ (we abuse notation slightly here, using the same notation for a knot type and a particular Legendrian representative of it). We adhere to conventions from the introduction, so the smooth surgery coefficient will be written as $\frac{p}{q}$. The corresponding contact surgery coefficient will be written $\frac{x}{y}$, so that $y = q$ while $x = p - q\tb(K)$. We write $p = mq - r$ as previously, where $0\leq r < q$, so that the rational surgery cobordism is $W = W_{p/q}: Y\#(-L(q,r))\to Y_{p/q}(K)$. 

Our strategy in the proof of Theorem \ref{naturalitythmintro} relies on reducible open book surgery. Our construction applies in particular in the case of a contact surgery coefficient $\frac{x}{y}\geq 1$, so for most of this section we will make that assumption. In Section \ref{gencoeffsubsec} we show how to deduce the general case from this one.

\subsection{Naturality} Our main goal for this section is the following, which is a portion of Theorem \ref{naturalitythmintro}. 

\begin{theorem} \label{naturalitythmspec} Let $K\subset (Y,\xi)$ be an oriented nullhomologous Legendrian knot, and $\frac{x}{y}\in\cue$ a contact framing with $\frac{x}{y}\geq 1$. Let $(Y_{x/y}(K), \xi_{x/y}^-)$ be the result of contact $\frac{x}{y}$ surgery on $K$, with the contact structure $\xi_{x/y}^-$ described below. If $W: Y\# (-L(q,r))\to Y_{p/q}(K)$ is the corresponding rational surgery cobordism, then there exists a \spinc structure $\s\in \Spinc(W)$ and a generator $\tilde{c}\in\hfhat(L(q,r))$ with the property that
\[
F_{-W,\s}(c(\xi)\otimes \tilde{c}) = c(\xi_{x/y}^-),
\]
where $-W: -Y\# L(q,r)\to -Y_{p/q}(K)$ is the oppositely-oriented cobordism.
\end{theorem}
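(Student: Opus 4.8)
The plan is to realize contact $\frac{x}{y}$ surgery on $\K\subset(Y,\xi)$ as a reducible open book surgery and then invoke Theorem~\ref{naturalitythm}. First I would recall the standard picture for rational contact surgery: one passes from the coefficient $\frac{p}{q}=\tb(\K)+\frac{x}{y}$ to a continued-fraction expansion, realizing the surgery as a sequence of contact $\pm 1$ surgeries on a Legendrian chain. For the choice $\xi_{x/y}^-$ in which all stabilizations are negative, there is a preferred such expansion; the contact $-1$ surgeries are Legendrian (Stein) surgeries, while a single contact $+1$ surgery on a suitably stabilized pushoff of $\K$ carries the essential information. The key geometric input is to encode all of this in an open book: starting from a compatible open book for $(Y,\xi)$ adapted to $\K$ (so that $\K$ sits on a page with the page framing realizing $\tb(\K)$), one performs Legendrian surgeries by positive Dehn twists (Giroux), and then the contact $+1$ surgery together with the meridional $\frac{q}{r}$ surgery producing the connect sum with $-L(q,r)$ should be visible as page-framed surgery along a curve $\gamma$ that separates the page, with one side $S''$ carrying the lens space $-L(q,r)$ and its canonical contact structure.

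The main steps, in order, are then: (1) construct the open book $(S,\phi)$ for a contact manifold $(\widetilde Y,\widetilde\xi)$ and a separating reducing curve $\gamma$ fixed by $\phi$, such that reducible open book surgery along $\gamma$ produces $(Y'\#Y'',\xi'\#\xi'')$ with $(Y',\xi')=(Y,\xi)$ and $(Y'',\xi'')=(-L(q,r),\xi_{\mathrm{can}})$; (2) identify $\widetilde Y$ with $Y_{p/q}(K)=Y_{x/y}(\K)$ and $\widetilde\xi$ with $\xi_{x/y}^-$ — this is where the continued-fraction/stabilization bookkeeping lives, and where the choice of all-negative stabilizations must be matched to a specific choice of arcs and twists; (3) verify that the open book $(S'',\phi'')$ for the lens space is HKM-strong, which I would get from the explicit genus-one (or appropriate) open book for $-L(q,r)$ together with the remark in the text that the canonical contact structure on a lens space admits an HKM-strong open book; (4) apply Theorem~\ref{naturalitythm} to obtain a \spinc structure $\s_0$ on $W_\gamma:-(Y'\#Y'')\to-\widetilde Y$ and a class $\tilde c\in\hfhat(-Y'')=\hfhat(L(q,r))$ with $F_{W_\gamma,\s_0}(c(\xi')\otimes\tilde c)=c(\widetilde\xi)=c(\xi_{x/y}^-)$; (5) identify the cobordism $W_\gamma$ (after orientation reversal) with $-W$, the reversed rational surgery cobordism, so that $\s_0$ corresponds to the desired $\s\in\spinc(W)$. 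Since $\hfhat(L(q,r),\tt)\cong\F$ for each $\tt$, the class $\tilde c$ depends only on its \spinc structure, hence only on $q$ and $r$, as claimed.

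I expect the main obstacle to be step~(2): showing that the particular reducible open book surgery one writes down actually produces $(Y_{p/q}(K),\xi_{x/y}^-)$ with the correct, all-negative-stabilization contact structure, rather than some other contact structure in the same \spinc class. This requires a careful translation between the contact-surgery-diagram description of $\xi_{x/y}^-$ (via a Legendrian link with specified stabilizations) and the open-book description, keeping track of page framings versus contact framings and of which boundary components of $S$ get capped. A secondary technical point is that the diagram $(\Sigma,\aalpha,\ggamma)$ resulting from the construction is only obtained from a connected sum of HKM diagrams after a stabilization and some handleslides (as the text already flags in Figure~\ref{a figure}), so one must check these moves do not disturb the identification of the canonical generators — but this is already handled inside the proof of Lemma~\ref{kunnethlemma}, so I would simply cite it. Finally, matching $\s_0$ with $\s$ in step~(5) is essentially functoriality of the Heegaard Floer cobordism maps under the diffeomorphism of cobordisms, which is routine. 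The upshot is that Theorem~\ref{naturalitythmspec} follows formally from Theorem~\ref{naturalitythm} once the open-book model for positive rational contact surgery is in place.
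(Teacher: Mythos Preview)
Your proposal is correct and follows essentially the same approach as the paper: build an open book for $(Y_{p/q}(K),\xi_{x/y}^-)$ via the DGS algorithm realized on pages, identify the reducing curve as the once-stabilized Legendrian $K^-$, check that the resulting planar open book $(S'',\phi'')$ for $-L(q,r)$ is HKM-strong, and apply Theorem~\ref{naturalitythm} together with the identification of the surgery cobordism with $-W_{p/q}$ (this is Lemma~\ref{surgcoblemma} in the paper). Two small corrections: in the DGS picture the contact $+1$ surgery is on $K_0=K$ itself (not on a stabilized pushoff), and the HKM-strong verification for the specific planar open book $(S'',\phi'')$ is not merely a remark but a genuine combinatorial argument about \spinc equivalence classes of generators, carried out in the subsection of the paper following Lemma~\ref{surgcoblemma}.
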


This theorem is an application of Theorem \ref{naturalitythm}, where we realize $-W$ above as a reducible open book surgery cobordism between $Y_{p/q}(K)$ and $Y\# (-L(q,r))$. To do this, we review an algorithm due to Ding, Geiges and Stipsicz \cite{DGS} for describing a rational contact surgery in terms of $\pm 1$ surgeries, and interpret that algorithm in the context of open books. 

\begin{theorem}[DGS algorithm] \label{DGSalgorithm} Given $Y,\xi,K$ as in Theorem \ref{naturalitythmspec}, let $0<\frac{x}{y}\in\cue$ be a contact surgery coefficient. Let $r\in\zee$ be the minimal positive integer such that $\frac{x}{y-rx} <0$, and form the continued fraction 
\[
\frac{x}{y-rx} = [a_1,a_2,\ldots, a_n] = a_1 - \frac{1}{a_2 - \frac{1}{\cdots - \frac{1}{a_n}}},
\]
where each $a_i \leq -2$. Then any contact $\frac{x}{y}$ surgery on $K$ can be described as contact surgery along a link $(K_0^{(1)}\cup\cdots\cup K_0^{(r)})\cup K_1\cup\cdots \cup K_n$,  where
\begin{itemize}
\item $K_0^{(1)},\ldots, K_0^{(r)}$ are parallel Legendrian pushoffs of the original Legendrian $K$,
\item $K_1$ is obtained from a Legendrian pushoff of $K_0^{(r)}$ by stabilizing $|a_1 + 1|$ times,
\item for $j\geq 2$, $K_j$ is obtained from a Legendrian pushoff of $K_{j-1}$ by stabilizing $|a_j + 2|$ times,
\item the contact surgery coefficient on each $K_0^{(i)}$ is $+1$, while the coefficient is $-1$ for the other $K_j$.
\end{itemize}
\end{theorem}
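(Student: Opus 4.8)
The plan is to reprove the algorithm exactly as Ding--Geiges--Stipsicz do in \cite{DGS}, which in turn rests on the contact $\pm 1$ surgery calculus of Ding--Geiges \cite{dinggeiges2004}. Three facts will be used throughout. First, the dictionary between contact $\pm 1$ surgery and smooth surgery: contact $(+1)$-surgery on a Legendrian knot $L$ is topologically $(\tb(L)+1)$-framed surgery, and contact $(-1)$-surgery is $(\tb(L)-1)$-framed surgery. Second, the behaviour of the classical invariants under the moves in play: a Legendrian push-off is Legendrian isotopic to the original knot and so has the same $\tb$ and $\rot$, while each stabilization lowers $\tb$ by one (and moves $\rot$ by $\pm 1$ according to its sign). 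Third---the essential contact-topological input---the Ding--Geiges cancellation lemma: contact $(+1)$-surgery on $L$ followed by contact $(-1)$-surgery on a Legendrian push-off of $L$ recovers the original contact manifold, together with its relative refinements describing how a $\pm 1$ contact surgery on a push-off interacts with a contact surgery already performed along $L$.

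I would build the link $K_0\cup K_1\cup\cdots\cup K_n$ by two successive reductions. The first is that contact $\frac{x}{y}$-surgery on $K_0=K$ (with $\frac{x}{y}\geq 1$) agrees with the operation: perform contact $(+1)$-surgery on $K_0$, then perform contact $\frac{x}{y-x}$-surgery on a Legendrian push-off of $K_0$ inside the resulting contact manifold. Here $\frac{x}{y-x}=\frac{x/y}{1-x/y}<-1$ whenever $\frac{x}{y}>1$ (the value $\frac{x}{y}=1$ giving the empty continued fraction, $n=0$), so the second step is a \emph{negative} contact surgery; smoothly this reduction is a single slam-dunk, and on the contact side it is a case of the relative cancellation lemma. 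The second reduction is the Ding--Geiges realization of a negative contact surgery as a chain of Legendrian surgeries: contact $r'$-surgery on a Legendrian knot $L$, for $r'<-1$ with negative continued fraction $r'=[a_1,\dots,a_n]$, $a_i\leq -2$, is obtained by contact $(-1)$-surgeries on $L_1\cup\cdots\cup L_n$, with $L_j$ a suitably stabilized push-off of $L_{j-1}$ (and $L_1$ of $L$); this is proved by induction on $n$, peeling one $(-1)$-surgery off the end of the chain using the cancellation/transfer lemmas, the framing arithmetic being the identity $[a_1,\dots,a_n]=a_1-1/[a_2,\dots,a_n]$. Concatenating the two reductions with $L$ the push-off of $K_0$ and $r'=\frac{x}{y-x}$, and using that a push-off of a push-off of $K_0$ is Legendrian isotopic to a push-off of $K_0$, produces the link of the statement; the asymmetry between the first stabilization count $|a_1+1|$ and the later ones $|a_j+2|$ is precisely where the initial $(+1)$-surgery on $K_0$ ``absorbs'' one unit of the first continued-fraction term. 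Conceptually this is the familiar picture in which contact $\frac{x}{y}$-surgery means regluing a tight contact solid torus along $K$ with boundary slope determined by $\frac{x}{y}$, and one is merely decomposing that solid torus into basic slices (a classification due to Honda) and reading each layer off as a $\pm 1$-surgery on a stabilized push-off, the signs of the stabilizations recording the signs of the basic slices.

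Two verifications then complete the argument. For the underlying smooth manifold: feeding $\tb(K_1)=\tb(K)-|a_1+1|$ and $\tb(K_j)=\tb(K_{j-1})-|a_j+2|$ into the dictionary gives smooth framings $\tb(K)+1$ on $K_0$ and $\tb(K_j)-1$ on $K_j$, and an iterated slam-dunk collapses the chain $K_1\cup\cdots\cup K_n$ to a single rational modification of the framing of $K_0$; the identity $\frac{x}{y-x}=[a_1,\dots,a_n]$ turns the net coefficient on $K_0$ into $\tb(K)+\frac{x}{y}=\frac{p}{q}$, so the surgered manifold is $Y_{p/q}(K)$. For the contact structure: contact $\frac{x}{y}$-surgery is by definition ambiguous exactly by the choice of tight structure on the reglued solid torus, these choices are indexed by the same continued-fraction data, and under the translation above each corresponds bijectively to a choice of signs for the stabilizations of the $K_j$; hence the link description realizes every contact $\frac{x}{y}$-surgery on $K$ and no other contact structure.

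The hard part is not the smooth bookkeeping, which is routine continued-fraction calculus, but the contact-topological content: promoting the smooth Kirby moves (slam-dunks and handle cancellations) to moves on \emph{contact} surgery diagrams via the Ding--Geiges cancellation and transfer lemmas, and threading the stabilization counts correctly through the induction---in particular the $|a_1+1|$ versus $|a_j+2|$ distinction separating the component $K_1$ nearest $K_0$ from the rest of the chain---while checking that one obtains exactly, neither more nor fewer than, the contact $\frac{x}{y}$-surgeries on $K$.
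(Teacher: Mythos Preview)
The paper does not actually prove this theorem: it is stated as a known result due to Ding, Geiges and Stipsicz and attributed to \cite{DGS}, with no proof given in the paper itself. Your proposal is a faithful and correct sketch of the original argument from \cite{DGS} (and \cite{dinggeiges2004}), so there is nothing to compare it against here.
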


The ambiguity in the resulting contact structure $\xi_{x/y}$ arises from the choice of stabilizations used for each $K_j$; the contact structure $\xi_{x/y}^-$ is the one given by choosing all these stabilizations to be {\it negative} (with respect to the chosen orientation of $K$).

Note that if $\frac{x}{y}\geq 1$, then we have $r = 1$ in Theorem \ref{DGSalgorithm}, i.e., there is only one $+1$ contact surgery required in the algorithm. We will assume $\frac{x}{y}\geq 1$ from now until Subsection \ref{gencoeffsubsec}, and write $K_0$ for $K_0^{(1)}$ in the algorithm above.

Recall that for any Legendrian $K\subset (Y,\xi)$ one can find an open book decomposition supporting $\xi$ such that $K$ lies on a page of the open book and such that the contact framing on $K$ agrees with the framing induced by the open book (see \cite[Corollary 4.23]{etnyreclay}, for example). Moreover, it was observed in \cite{BEV} that one can arrange for stabilizations of $K$ to appear on pages of the stabilized open book in the following way. Having fixed an orientation for $K$, choose an embedded path $c$ on the page, which starts on a boundary component and approaches $K$ ``from the right.'' Stabilize the open book using the arc on the page that is the non-closed component of the boundary of a regular neighborhood of $K\cup c$; then the negative stabilization $K^-$ is Legendrian isotopic to a curve on the page of the stabilized open book that is parallel to the boundary component meeting $c$, and the page framing of $K^-$ agrees with the contact framing (see Figure \ref{Figure D, E, F}). The stabilization involves composing the monodromy of the open book with a Dehn twist along the closed curve $C$ that is the union of the indicated arc with the core of the new 1-handle. 
\begin{figure}
\includegraphics[width=4.5in]{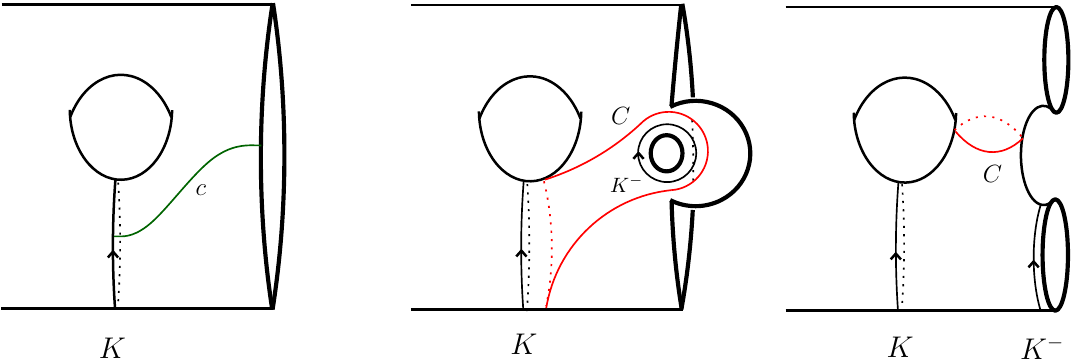}
\caption{\label{Figure D, E, F}Stabilizing a Legendrian on the page of an open book. The left picture is before stabilization, the center and right-hand pictures are equivalent pictures after stabilization, with the stabilized Legendrian $K^-$ indicated, and where the monodromy has been composed with a right twist about $C$.}
\end{figure}

Iterating this procedure, we can stabilize the open book repeatedly to obtain a decomposition supporting $\xi$ in which an arbitrary number of stabilizations of $K$ appear as curves on the page. In particular, given $\frac{x}{y}\geq 1$, we can find an open book decomposition for which all the Legendrians $K_0,\ldots, K_n$ from Theorem \ref{DGSalgorithm} appear as closed curves on the page, with contact framing equal to the page framing. Moreover, since performing $-1$ (resp. $+1$) surgery along a closed curve in the page, where the framing is measured with respect to the page framing, is equivalent to composing the monodromy of the open book with a right- (resp. left-) handed Dehn twist, we get an open book decomposition for $\xi^-_{x/y}$ by adding a left twist to $\phi$ along $K = K_0$, and right twists along each $K_1,\ldots, K_n$ (the Dehn twists corresponding to the various pushoffs commute, since the curves are disjoint in the page). The procedure is illustrated in Figure \ref{Figure G}. Note that each $K_j$ is a parallel copy of some stabilized Legendrian $K^{-m}$, but not all stabilizations are necessarily used and some may be repeated. We denote the open book decomposition for $(Y_{x/y}(K),\xi^-_{x/y})$ obtained this way by $(S,\phi)$. 

\begin{figure}
\includegraphics[width=4in]{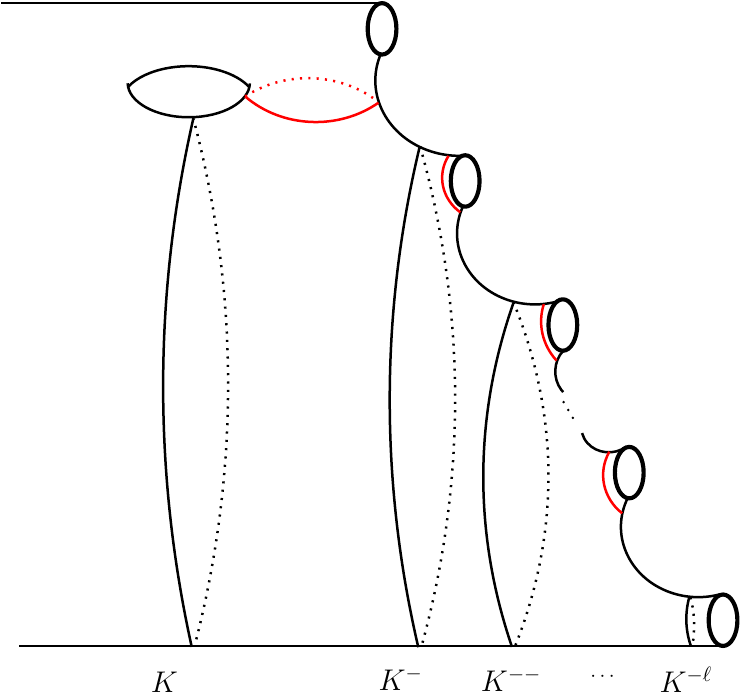}
\caption{\label{Figure G}Iterating the stabilization procedure. Note that a right Dehn twist is introduced to the monodromy around each new boundary component except for the last.}
\end{figure}

Observe that in $(S,\phi)$ the monodromy preserves the once-stabilized Legendrian $K^-$, and that $K^-$ separates $S$ with boundary components on each side. Applying reducible open book surgery along $K^-$, we obtain a 3-manifold that is the connected sum of manifolds with open book decompositions $(S',\phi')$ and $(S'',\phi'')$. Note in particular that after the reducing surgery, the original Legendrian $K\subset S'$ is isotopic to the first stabilization curve $C$, and therefore the right twist along $C$ cancels with the left twist we introduced along $K = K_0$. Hence, the open book $(S',\phi')$ is indeed the original open book for $(Y,\xi)$ we began with.

On the other hand, the open book $(S'',\phi'')$ is clearly planar and the monodromy $\phi''$ is given by the composition of right twists along all but one boundary component together with various copies of the disjoint circles $K^{-j}$ as indicated in Figure \ref{Figure H}. The proof of Theorem \ref{naturalitythmspec} rests on the following result.

\begin{figure}
\includegraphics[width=4.5in]{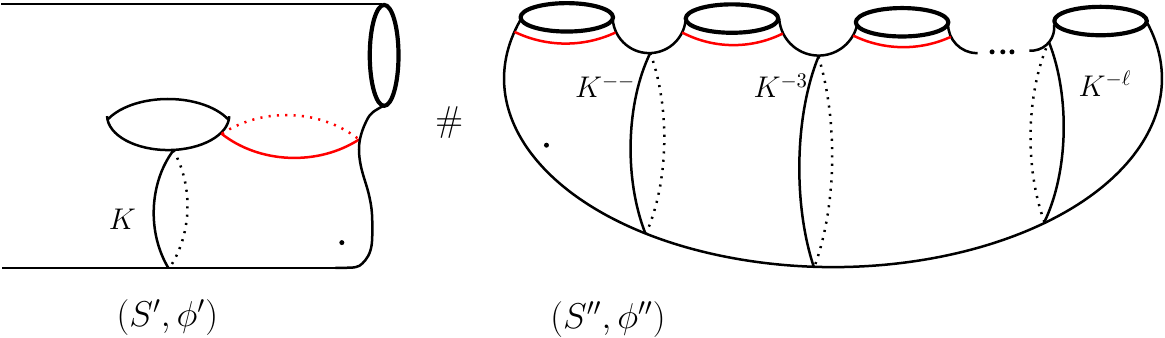}
\caption{\label{Figure H}The result of reducible open book surgery along $K^-$ in the open book of Figure \ref{Figure G}. The connect-sum points are indicated.}
\end{figure}

\begin{lemma} The 3-manifold $Y''$ described by $(S'',\phi'')$ is diffeomorphic to the lens space $-L(q,r)$. Moreover, the 2-handle cobordism $Y_{p/q}\to Y\#(-L(q,r))$ corresponding to the reducible open book surgery is diffeomorphic to $-W_{p/q}$, the rational surgery cobordism with its orientation reversed. Finally, the open book decomposition $(S'',\phi'')$ is HKM strong in the sense of Definition \ref{strongdef}.
\label{surgcoblemma}
\end{lemma}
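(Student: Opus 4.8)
The plan is to establish the three assertions in turn, translating freely between the open book $(S'',\phi'')$ of Figure~\ref{Figure H}, its Kirby diagram, and the Ding--Geiges--Stipsicz data of Theorem~\ref{DGSalgorithm}.

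\emph{Identification of $Y''$.} The page $S''$ is planar, and $\phi''$ is a product of right-handed (positive) Dehn twists: one about a curve parallel to each binding component except one, together with right twists about the mutually disjoint curves carrying the stabilized copies $K^{-j}$ of $K$. I would first read off $H_1(Y'')$ from the standard linking-matrix presentation of the first homology of an open book determined by the Dehn-twist curves of $\phi''$; by construction these relations are exactly the ones encoded by the continued fraction $\frac{x}{y-x}=[a_1,\dots,a_n]$, so $|H_1(Y'')|=q$ with the appropriate linking pairing. Since $\phi''$ is a product of positive twists the open book is Stein fillable, so $\xi''$ is tight; recognizing $\xi''$ (again via the continued fraction) as the canonical contact structure on the lens space, and matching this with the paper's convention that $L(q,r)$ denotes $-q/r$ surgery on the unknot, then yields $Y''\cong -L(q,r)$. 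Equivalently one can convert $(S'',\phi'')$ to a surgery presentation on a chain of unknots and Kirby-reduce it directly, keeping track of the orientation reversal.

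\emph{Identification of the cobordism.} The reducible open book surgery cobordism $W_\gamma\colon Y_{p/q}(K)\to Y\#(-L(q,r))$ is the trace of attaching one $2$-handle along $\gamma=K^-$ with the framing induced by the page. I would draw the Kirby diagram for $(Y_{p/q}(K),\xi_{x/y})$ produced by the DGS algorithm --- the chain $K_0\cup K_1\cup\cdots\cup K_n$ inside a diagram for $Y$, with smooth framings $\tb(K_0)+1$ on $K_0$ and $\tb(K_j)-1$ on the remaining components --- and locate $\gamma=K^-$, carrying its page framing, as the small unknotted curve linked into this chain in the way forced by the first stabilization. Attaching the $W_\gamma$ handle along $\gamma$ and running the resulting slam-dunks and blow-downs collapses the $K_1,\dots,K_n$ subchain into a single rational coefficient, the arithmetic being precisely the identity $m-\frac{r}{q}=\frac{p}{q}$; what remains is the defining diagram for $W_{p/q}$, namely $K\subset Y$ with framing $m$ together with a $(q/r)$-framed meridian of $K$. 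Because the $W_\gamma$ handle is attached from the $Y_{p/q}(K)$ end of this picture, its trace is $W_{p/q}$ with its orientation reversed.

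\emph{HKM-strongness.} Finally I would exhibit an explicit HKM diagram for $(S'',\phi'')$ and count intersection points. Since $Y''=-L(q,r)$ is an $L$-space, $\hfhat(-Y'',\tt)\cong\F$ for every $\tt$, so by Definition~\ref{strongdef} it suffices to show that in such a diagram the $q=|H_1(Y'')|$ generators of $\cfhat$ lie in pairwise distinct \spinc structures. Choosing the cutting arcs $a_1,\dots,a_N$ of the planar surface $S''$ adapted to the chain of binding components of Figure~\ref{Figure H}, and using that $\phi''$ is a product of positive twists about curves that are either boundary-parallel or mutually disjoint, one finds that the translated arcs $\phi''(b_i)$ meet the $a_j$ in the minimal, explicitly enumerable pattern familiar from lens-space open books; a direct bookkeeping of the associated \spinc structures then shows that each one carries exactly one generator. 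In particular the canonical generator $\x''$ is the unique intersection point in $\s_{\xi''}$, so $(S'',\phi'')$ is HKM-strong; this simultaneously pins down the class $\tilde{c}\in\hfhat(-Y'')\cong\hfhat(L(q,r))$ of Theorem~\ref{naturalitythm} as the generator of the summand labeled by $\s_{\xi''}$.

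The step I expect to be the real obstacle is the last one: the first two assertions amount, in effect, to repackaging the DGS algorithm together with the slam-dunk description of rational surgery (the only delicate point being to orient $W_\gamma$ so as to land on $-W_{p/q}$ rather than on $W_{p/q}$ or a reorientation of one of its summands), whereas HKM-strongness requires genuinely producing a ``thin'' HKM diagram for $(S'',\phi'')$ and controlling the \spinc structure of every one of its generators.
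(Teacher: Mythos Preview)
Your approach to the cobordism identification is close to the paper's in spirit: both proceed by Kirby calculus on the DGS diagram with the extra $K^-$ handle. The paper's specific tactic is to turn the cobordism over and reverse orientation by bracketing all surgery coefficients and introducing a $0$-framed meridian of $K^-$, then perform explicit handle slides (each $K_j$ over $K_{j-1}$ for $j\ge 2$, then $K^-$ and $K_1$ over $K_0$) to arrive at the standard rational surgery picture (Figures~\ref{Figure K} and~\ref{Figure P}). The first assertion about $Y''$ then follows from this, rather than from a separate computation of $H_1$ or of the contact structure.

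There is a genuine gap in your HKM-strong paragraph. The claim that the HKM diagram for $(S'',\phi'')$ has exactly $q$ generators is not correct: the diagram of Figure~\ref{embHdiag} (equivalently Figure~\ref{embHdiag2}) has both ``paired'' generators, with the permutation $\sigma(j)=j$ for each $j$, and ``non-paired'' ones coming from the single intersection of each $\beta_j$ with $\alpha_{j\pm 1}$. The paper's argument (Section~3.3) first shows each non-paired generator is \spinc-equivalent to a paired one via an explicit quadrilateral, and then proves that for any paired $\U\ne\V$ the difference class $\epsilon(\V,\U)\in H_1(L(x_0,x_1))$ is nonzero by establishing a combinatorial bound $0<\sum n_j x_j<x_0$ using the continued-fraction recursion for the $x_j$. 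Your ``direct bookkeeping'' would have to carry out something of this sort; it is not the case that the arcs can be chosen to make the generator count minimal.

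Finally, your parenthetical about $\tilde c$ is incorrect: $\tilde c$ is \emph{not} the generator of the summand labeled by $\s_{\xi''}$. As the paper emphasizes (see the remarks after Theorem~\ref{naturalitythmintro} and the computation around \eqref{dinvts}), $\tilde c$ is represented by the canonical generator $\x''$ with the basepoint moved from $w''$ to $z''$, so its \spinc structure is $\s_{\xi''}+PD[\mu]$, not $\s_{\xi''}$.
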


\begin{figure}
\includegraphics[width=2.5in]{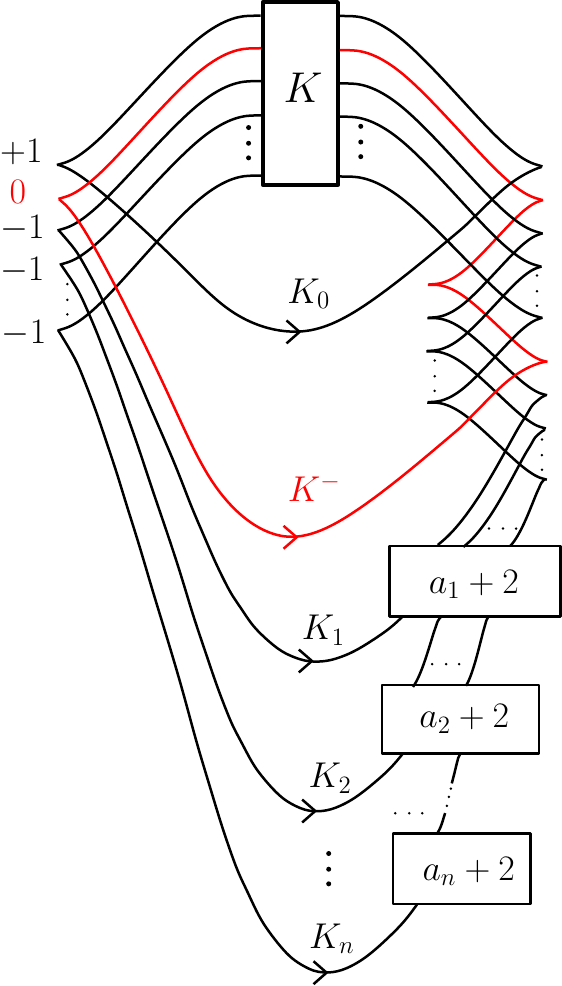}
\caption{\label{Figure I}The Ding-Geiges-Stipsicz picture for rational contact surgery. The boxes indicate the number of stabilizations; note each $a_j+2\leq 0$, i.e., all stabilizations are negative.}
\end{figure}

\begin{proof}
The first sentence follows from the second. The latter can be proved in a manner analogous to \cite[Proposition 4.1]{LS2011}, in fact our constructions in this section are generalizations of those in \cite{LS2011}. Working more directly,  Figure \ref{Figure I} shows the contact surgery diagram resulting from the Ding-Geiges-Stipsicz procedure, along with a copy of the once-stabilized knot $K^-$. Note that $K$ is a Legendrian in a 3-manifold $(Y,\xi)$, which itself can be described by a contact surgery diagram; this background diagram is not indicated in the figure. The reducible open book surgery cobordism is given by a 2-handle attached along $K^-$ with framing equal to the Thurston-Bennequin invariant of $K^-$; we turn the cobordism over and reverse its orientation by bracketing all surgery coefficients and introducing a 0-framed meridian for $K^-$. To simplify the picture, note that the stabilized pushoffs $K_2,\ldots, K_n$ can be successively slid, each over the preceding, to give a chain of unknots (in fact, since we have performed $-1$ contact surgery on each of these knots, \cite{DGhandle} implies each $K_j$  is Legendrian isotopic to a stabilized standard Legendrian meridian of $K_{j-1}$ for $j = 2,\ldots, n$). The corresponding smooth surgery picture is shown in Figure \ref{Figure K}(a), and a little more manipulation of the diagram (sliding $K^-$ and then $K_1$ over $K_0$) gives Figure \ref{Figure K}(d). 
\begin{figure}
\includegraphics[width=4.5in]{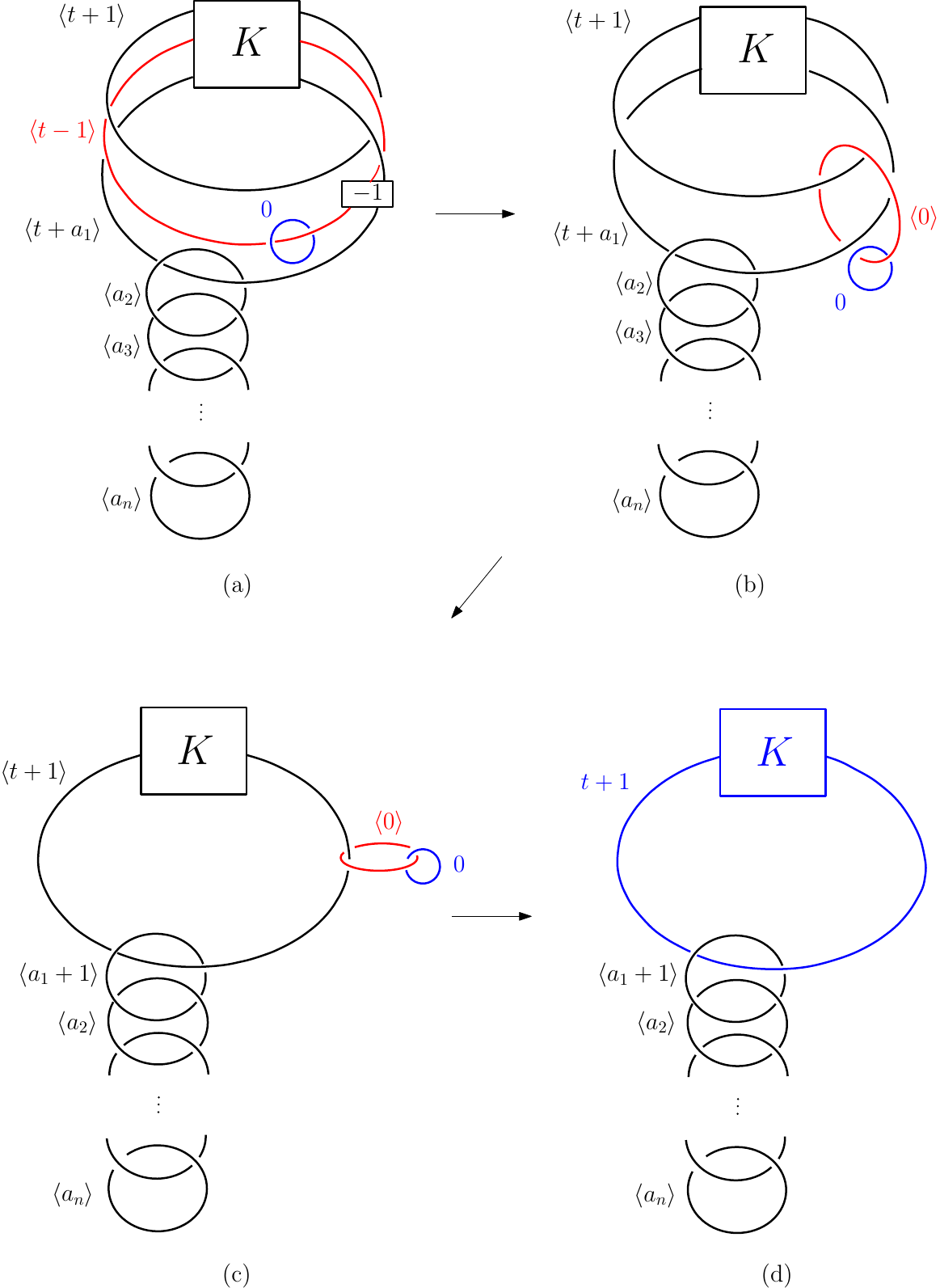}
\caption{\label{Figure K}The reducible open book surgery cobordism, upside-down and with reversed orientation. Here we write $t = \tb(K)$ for the Thurston-Bennequin number of the original knot $K$.}
\end{figure}
Recalling that $[a_1,\ldots, a_n] = \frac{x}{y-x}$, that $x = p-q\tb(K)$, $y = q$, and writing $p = mq-r$ it is easy to obtain Figure \ref{Figure P}, which is nothing but the surgery cobordism $W_{p/q}$. Since our picture has the wrong orientation, the proof is complete except for the claim that the diagram for $-L(q,r)$ is HKM strong. We postpone this to Section \ref{strongsec}.
\end{proof}

\begin{figure}
\includegraphics[width=4in]{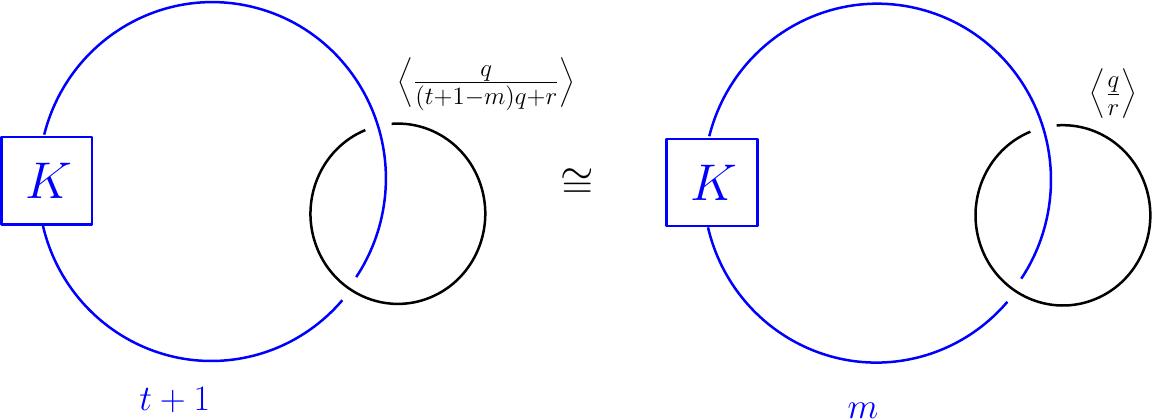}
\caption{\label{Figure P}Simplification of Figure \ref{Figure K}, still writing $t = \tb(K)$.}
\end{figure}

\begin{proof}[Proof of Theorem \ref{naturalitythmspec}] The previous lemma shows that the oppositely-oriented surgery cobordism $-W_{p/q}: -Y\#L(q,r)\to -Y_{x/y}(K)$ is diffeomorphic to the reducible open book surgery cobordism, after turning around. The naturality property of such reducible surgery cobordisms (Theorem \ref{naturalitythm}) proves the result.
\end{proof}

\subsection{$Spin^c$ structures} We now wish to identify the \spinc structure on the rational surgery cobordism whose existence is guaranteed by Theorem \ref{naturalitythmspec}. Since it is obtained from the reducible open book surgery construction, let us write $\srobs\in \Spinc(-W_{p/q})$ for that \spinc structure. 

To begin with, observe that there is another natural \spinc structure on $W_{p/q}$ arising from the Ding-Geiges-Stipsicz algorithm. Indeed, we can think of the diagram of Figure \ref{Figure I} (without the handle corresponding to $K^-$) as describing a 4-manifold $X_{p,q}$, which is clearly diffeomorphic to that given in Figure \ref{Figure K}(d) after removing all brackets. Thus $X_{p,q}$ contains the rational surgery cobordism $W_{p/q}: Y\# {-L(q,r)}\to Y_{p/q}(K)$. Moreover, since there is a single $+1$ contact surgery coefficient in the diagram of Figure \ref{Figure I}, the manifold $\Xhat_{p,q} = X_{p,q}\# \cee P^2$ admits an almost-complex structure $\jdgs$ whose Chern class evaluates on the 2-handles corresponding to the Legendrians in Figure \ref{Figure I} as the corresponding rotation number---see \cite[Proposition 3.1]{DGS}, also \cite{EliashbergStein,Gompf}. (Here and to follow, we suppose an oriented Seifert surface for $K$ has been fixed.) We get a corresponding \spinc structure $\sdgs$ on $\Xhat_{p,q}$, and write $\sdgs\in \Spinc(W_{p/q})$ also for its restriction to the rational surgery cobordism. Note that both $\srobs$ and $\sdgs$ restrict to $Y_{p/q}(K)$ as the \spinc structure induced by the contact structure. 

The next result characterizes the \spinc structure arising from $\jdgs$; we will see that $\srobs$ is essentially the ``same'' as this one (in quotes, since $\srobs$ is on the oppositely-oriented manifold).

\begin{lemma}\label{Jevaluationlemma} Fix a generator $[S]\in H_2(W_{p/q};\zee)$ satisfying \eqref{conv1}, i.e., so that $[S]$ corresponds to $q$ times the relative class given by the core $F$ of the 2-handle, where the latter is oriented so that $\partial F = -K$. The \spinc structure $\sdgs\in \Spinc(W_{p/q})$ satisfies
\[
\langle c_1(\sdgs), [S]\rangle = \rot(K)\,y + x - 1 = p + (\rot(K) - \tb(K))\, q - 1
\]
\end{lemma}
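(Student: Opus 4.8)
The strategy is to compute $\langle c_1(\sdgs), [S]\rangle$ directly from the definition of $\sdgs$, using the fact that $\jdgs$ is built from a Stein-like handle attachment along the Legendrian link $K_0\cup K_1\cup\cdots\cup K_n$ of the DGS algorithm, so that $c_1(\jdgs)$ evaluates on the 2-handle core corresponding to $K_j$ as $\rot(K_j)$, after blowing up once to absorb the single $+1$-framed handle. First I would fix a convenient basis for $H_2(\Xhat_{p,q};\zee)$ consisting of capped-off cocores (or Seifert-surface-cappings) $[\Sigma_j]$ of the 2-handles attached along $K_0,\dots,K_n$, plus the exceptional sphere $[E]$ of the $\cee P^2$-summand; on this basis $\langle c_1(\sdgs),[\Sigma_j]\rangle = \rot(K_j)$ for $j\geq 1$, $\langle c_1(\sdgs),[\Sigma_0]\rangle = \rot(K_0)\pm 1$ (the $\pm1$ coming from the $+1$-surgery/blow-up bookkeeping in \cite{DGS}), and $\langle c_1(\sdgs),[E]\rangle=\mp1$. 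Then I would express the generator $[S]\in H_2(W_{p/q})\subset H_2(\Xhat_{p,q})$ in this basis: since $[S]$ is $q$ times the relative core class $[F]$ of the last 2-handle capped off with $q$ parallel copies of the Seifert surface of $K$ pushed into $Y\#{-L(q,r)}$, writing it in terms of the $[\Sigma_j]$ amounts to solving the linear system encoded by the linking matrix of the Kirby diagram in Figure~\ref{Figure K}(d)/Figure~\ref{Figure P}.

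The key computational input is the continued-fraction identity $\frac{x}{y-x}=[a_1,\dots,a_n]$ together with the stabilization data: $K_1$ is $|a_1+1|$ times stabilized, $K_j$ is $|a_j+2|$ times stabilized for $j\geq 2$, and each stabilization is negative, so $\rot(K_j)$ decreases by $1$ at each stabilization while $\tb$ also changes in the standard way. Tracking these through the handle slides that turn the DGS chain into the surgery-cobordism picture, I expect the sum $\sum_j m_j\rot(K_j)$ (where $[S]=\sum m_j[\Sigma_j]+(\text{something})[E]$) to telescope. Concretely, the relation $p=mq-r$ and $x=p-q\,\tb(K)$, $y=q$ feed into the coefficients $m_j$, and the negative-stabilization choice ensures all the rotation contributions add with a consistent sign, yielding $\rot(K)\,y+x-1$. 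The second equality $\rot(K)\,y+x-1 = p+(\rot(K)-\tb(K))q-1$ is then immediate from $x=p-q\,\tb(K)$ and $y=q$.

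I would organize the argument as: (i) recall from \cite{DGS} the precise evaluation of $c_1(\jdgs)$ on handle cocores and the effect of the $\cee P^2$-blowup; (ii) set up the linking matrix of Figure~\ref{Figure P} and identify $[S]$ as an explicit integer combination of the standard generators, using the condition that $[S]\mapsto q[F]$ and that its boundary in $Y\#{-L(q,r)}$ bounds; (iii) substitute the rotation numbers of the stabilized pushoffs and simplify using the continued-fraction relation. Alternatively, and perhaps more cleanly, I would avoid choosing a full basis and instead use the pairing $\langle c_1(\sdgs),[S]\rangle$ together with the adjunction/self-intersection data for the capped Seifert surface $\widetilde S$, exploiting that $[S]$ is characterized (up to the usual ambiguity, which pairs trivially with $c_1$) by $[S]\mapsto q[F]$: this reduces the computation to evaluating $c_1(\sdgs)$ against a single explicit surface built from the Seifert surface of $K$ and the cores of the handles met by its $q$-fold cap.

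\textbf{Main obstacle.} The delicate point is the sign bookkeeping and the precise value of the $\pm1$ contributions: keeping straight the orientation conventions (the core $F$ oriented with $\partial F=-K$, the blow-up vs.\ blow-down convention for the $+1$-surgery handle in \cite{DGS}, and the contact-geometry vs.\ topology sign conventions for $L(q,r)$) so that the final answer comes out exactly as $\rot(K)\,y+x-1$ rather than off by a sign or by $\pm q$. Getting the coefficient of $[S]$ in the chosen basis right — equivalently, inverting the linking matrix correctly and matching the normalization $[S]\mapsto q[F]$ — is where an error would most plausibly creep in, and I would cross-check the result against the known integer-surgery case ($q=1$, where this reduces to the formula implicit in \cite{LS2011}) before trusting it.
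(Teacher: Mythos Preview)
Your plan is essentially the paper's own approach: compute $c_1(\jdgs)$ on the DGS handle basis via rotation numbers, express $[S]$ in that basis by solving the linear system coming from the linking/intersection matrix, and evaluate. Two corrections to your setup will save you grief. First, in the DGS construction the Chern class evaluates on the $K_0$-handle as exactly $\rot(K_0)$, not $\rot(K_0)\pm 1$; the $\cee P^2$ summand sits orthogonally and contributes nothing to the pairing with $[S]$, so you can and should ignore $[E]$ entirely. Second, rather than working directly in the basis $[\Sigma_j]$, the paper first performs the handleslides taking Figure~\ref{Figure I} to Figure~\ref{Figure K}(d), i.e.\ passes to $\ell_0=k_0$, $\ell_j=k_j-k_{j-1}$; in this basis the Chern evaluations become $\rot(K)$, $a_1+1$, $a_2+2,\dots,a_n+2$, and $[S]$ is identified as the (unique, suitably normalized) kernel vector of the intersection matrix with the $\ell_0$-row deleted. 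The continued-fraction recursion for $[a_1+1,a_2,\dots,a_n]$ then gives the entries of this vector explicitly (first entry $-y$, second $x-y$, last $1$), and the kernel equation itself makes the sum telescope to $-\rot(K)y-x+1$, which is $\langle c,-[S]\rangle$ by the normalization $[S]\mapsto q[F]$. Your ``alternative'' adjunction idea is not what the paper does and would be harder to make precise here.
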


\begin{proof} For simplicity, we suppose that the initial contact manifold is $(Y,\xi) = (S^3,\xi_{std})$. The general case (involving keeping track of a contact surgery diagram for $(Y,\xi)$) is similar.

The Legendrians $K = K_0,\, K_1,\ldots, K_n$ in Figure \ref{Figure I} correspond to 2-handles in $\Xhat_{p,q}$, and since all the $K_i$ are nullhomologous we get corresponding homology classes $k_0,\,k_1,\ldots, k_n\in H_2(\Xhat_{p,q};\zee)$. The Chern class $c = c_1(\jdgs)\in H^2(\Xhat_{p,q},\zee)$ of the  almost-complex structure satisfies
\begin{eqnarray*}
\langle c, k_0\rangle &=& \rot(K)\\
\langle c, k_j\rangle &=& \rot(K) + a_1 + \cdots + a_j + 2j -1 \quad \mbox{for $j  =1,\ldots n$}.
\end{eqnarray*}
The handleslides relating Figure \ref{Figure I} to Figure \ref{Figure K}(d) show that 
\begin{eqnarray}
\langle c, \ell_0\rangle &=& \rot(K)\nonumber\\
\langle c, \ell_1\rangle &=& a_1 + 1\label{c1values}\\
\langle c, \ell_j \rangle &=& a_j + 2 \quad \mbox{for $j = 2,\ldots, n$},\nonumber
\end{eqnarray}
where $\ell_0 = k_0$ and $\ell_j = k_j - k_{j-1}$ for $j = 1,\ldots, n$ are the homology classes corresponding to the handles in Figure \ref{Figure K}(d) (with brackets removed).

With respect to the basis $\{\ell_0,\ell_1,\ldots, \ell_n\}$, the intersection form of $X_{p,q}$ is given by the matrix
\[
Q_{X_{p,q}}\cong\left[\begin{array}{ccccccc}  t+1 & -1 & & & & \\
  -1 & a_1+1 & 1 &&&\\
 & 1 & a_2 &&&\\
 &&&\cdots &\\
 &&&& & 1 \\
 &&&&1 & a_n \end{array}\right]
\]
The cobordism $W_{p/q}$ is given just by the handle addition corresponding to $\ell_0$, after adding $\ell_1,\ldots,\ell_n$, and therefore the class  $[S]\in H_2(W_{p/q},\zee)$ can be represented in our basis as a generator for the kernel of the matrix obtained by deleting the row corresponding to $\ell_0$ in the intersection matrix for $X_{p,q}$ (see, for example, \cite[Section 9]{ManOzs}). To understand this class, recall that for a rational number $x_0/x_1>0$, its continued fraction expansion is obtained inductively by using the division algorithm to write 
\begin{eqnarray}
x_0 &=& b_1 x_1 - x_2 \quad \mbox{with $0\leq x_2<x_1$}\nonumber\\
x_1 &=& b_2 x_2 - x_3 \quad \mbox{with $0\leq x_3< x_2$}\nonumber\\
&\vdots& \label{recursion}\\
x_{n-1} &=& b_n x_n,\nonumber
\end{eqnarray}
where the procedure stops when the remainder $x_n$ divides the preceding remainder $x_{n-1}$. If $x_0/x_1$ is in lowest terms, this happens just when $x_n = 1$. In particular, this means that the vector $(x_0,x_1,\ldots, x_n)$ is in the kernel of
\begin{equation}\label{Mdef}
M(-b_1,\ldots, -b_n): = \left[\begin{array}{ccccccc} 1 & -b_1 & 1 & & & & \\
 & 1 & -b_2 & 1 &&&\\
 && 1 & -b_3 &&&\\
 &&&&\cdots &\\
 &&&&& & 1 \\
 &&&&&1 & -b_n \end{array}\right]
 \end{equation}
 and conversely the kernel of this matrix is spanned by a vector uniquely specified by requiring its entries to be decreasing positive integers with final entry 1. Moreover, this canonical kernel vector $(x_0,\ldots, x_n)$ has the property that $\gcd(x_j, x_{j+1}) = 1$, and $x_0/x_1 = [b_1,\ldots, b_n]$ as a continued fraction. 

Deleting the row corresponding to $\ell_0$ in the intersection matrix for $X_{p,q}$ gives the matrix $\MM = M^-(a_1+1,a_2, \ldots, a_n)$, which is obtained from $M(a_1+1,a_2, \ldots, a_n)$ by reversing the sign of the $(1,1)$ entry. The kernel of $\MM$ is spanned by the vector $\y = (y_0,y_1,\ldots, y_n)$, where the $y_j$ satisfy equations analogous to \eqref{recursion} above, modulo the first sign. It follows, arranging $y_n = 1$, that $y_0/y_1 =1 + [a_1,\ldots, a_n] = y/(y-x)$. Since $y_j>0$ for $j\geq 1$, we conclude $y_0 = -y$ and $y_1 = x-y$.

Thinking of the generator $\y$ as a linear combination of the $\ell_j$ and using \eqref{c1values}, the evaluation of $c$ on $\y$ is given by $\rot(K)\, y_0 + (a_1+1)y_1 + (a_2+2)y_2  + \cdots + (a_n+2)y_n$. It follows from the equations $\MM\y = 0$ that 
\begin{equation*}
-y_0 +  (a_1+1)y_1 + (a_2+2)y_2 + \cdots + (a_{n-1}+2)y_{n-1} + (a_n + 1)y_n = 0,
\end{equation*}
and this, along with $y_0 = -y$, $y_1 = x-y$, and $y_n = 1$, shows quickly that $\langle c,\y\rangle = -\rot(K)\,y - x + 1$. 

Finally, observe that $\y$ corresponds to the class $-[S]$, since the coefficient of the relative class $[F]$ is $y_0 = -y = -q$ rather than $q$.

\end{proof}

%On the lens space $-L(q,r)$ there are also several \spinc structures of interest: first is the ``canonical'' one corresponding to the standard intersection point $\x''$ in the HKM diagram arising from the open book $(S'',\phi'')$ in Figure \ref{Figure H}, with basepoint $w''$ in the ``large'' region. We write $\tt_{can}\in \spinc (-L(q,r))$ for this \spinc structure; it is the \spinc structure corresponding to the canonical contact structure on $-L(q,r) = L(q,q-r)$, and the intersection point $\x''$ represents the contact invariant of that contact structure. We must also consider the same HKM diagram with its basepoint shifted (basepoint at $z''$, in the notation from the previous section): the intersection point $\x''$ now represents the class $\tilde{c}$ and lies in the \spinc structure $\tt_{can} + [\mu]$, where $[\mu]$ is the (Poincar\'e dual of the) class in $L(q,q-r)$ given by a circle dual to $\alpha''_1$ (c.f. \cite[Lemma 2.19]{OS1}). 
%\begin{remark} One can check that the basepoints $w''$ and $z''$ arising after the decomposition  $\Sigma \to \Sigma'\#\Sigma''$ are connected by an arc that crosses just one $\gamma$ curve, transversely in a single point. This follows from the fact that in Figure \ref{Figure H} the monodromy $\phi''$ includes just one Dehn twist around the leftmost boundary component of $S''$. Note that a circle dual to this $\gamma$ curve is also dual to the leftmost $\alpha$ curve.
%\end{remark}

By construction, $\jdgs$ induces the plane field $\xi^-_{x/y}$ on $Y_{p/q}$. It also induces a plane field $\xidgs$ on the lens space $L(q,q-r)\subset \widehat{X}_{p,q}$, but in general this is not homotopic to a ``standard'' contact structure. To understand this, write $P$ for the plumbed 4-manifold obtained by attaching 2-handles to a chain of unknots in $S^3$ with framings $a_1+1,a_2,\ldots,a_n$ (that is, $P$ is given by the diagram of Figure \ref{Figure K}(d), with the knot $K$ omitted and brackets removed). Writing $\ell_1,\ldots,\ell_n$ for the corresponding 2-dimensional homology classes as above, the Chern class of $\jdgs$ evaluates on the $\ell_j$ according to \eqref{c1values}. On the other hand, there is a standard contact structure on $L(q,q-r)$ (the universally tight one), induced by a particular Stein structure on a plumbed manifold bounding $L(q,q-r)$. If $a_1\leq -3$ then this manifold is just $P$ itself, but if $a_1 = -2$ we may need to blow down. The result we need is the following.

\begin{lemma} There exists a complex structure $\jhat$ on $P$ inducing the standard Stein fillable contact structure $\xi_{Stein}$ on $L(q,q-r)$. Moreover, we have
\begin{eqnarray*}
\langle c_1(\jhat), \ell_1\rangle &=& a_1 + 3\\
\langle c_1(\jhat), \ell_j\rangle &=& a_j + 2 \quad \mbox{for $j = 2,\ldots, n$}.
\end{eqnarray*}
\end{lemma}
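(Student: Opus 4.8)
The plan is to exhibit $\jhat$ explicitly as the Stein structure on the plumbing $P$ (or, when $a_1 = -2$, on the blow-down of $P$), and then compute the evaluation of its Chern class on the $\ell_j$ by the standard adjunction/rotation-number bookkeeping for Legendrian handle attachments. First I would recall the well-known description (due to Gompf, see also \cite{etnyreclay}) of a Stein structure on a 4-manifold built by attaching Legendrian 2-handles to the standard Stein $D^4$: the plumbing $P$ is obtained from a chain of unknots with framings $a_1+1, a_2, \ldots, a_n$, and since each $a_j \leq -2$ for $j \geq 2$ and $a_1 + 1 \leq -3$ when $a_1 \leq -4$, each unknot can be realized as a Legendrian with Thurston-Bennequin number one greater than its framing, so the chain gives a legitimate Legendrian surgery diagram and hence a Stein structure. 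The boundary is $L(q, q-r)$ with its universally tight (Stein-fillable) contact structure $\xi_{Stein}$, so this $\jhat$ has the asserted property on the boundary.

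Next I would compute $\langle c_1(\jhat), \ell_j\rangle$. For a Stein structure built from Legendrian 2-handles, $c_1(\jhat)$ evaluates on the homology class of the handle attached along a Legendrian $L$ as $\rot(L)$. So the task reduces to choosing Legendrian realizations of the unknots in the chain and reading off their rotation numbers. For the framing-$a_j$ unknot with $j \geq 2$, the Legendrian realization has $\tb = a_j - 1$ and one then picks the stabilizations so that $\rot = a_j + 2$; this is the standard choice that matches the Ding-Geiges-Stipsicz conventions, and it is visibly consistent with \eqref{c1values} since there the class $\ell_j$ (for $j \geq 2$) is the image of the same handle. For the first handle, however, the framing in $P$ is $a_1 + 1$ rather than $a_1$, so its Legendrian realization has $\tb = a_1$, i.e., one fewer stabilization than in the diagram underlying \eqref{c1values}; choosing that one fewer stabilization to be the negative one it contributed to $\rot$, the rotation number increases by $2$ from $a_1 + 1$ to $a_1 + 3$. (One must check the signs: removing a negative stabilization raises $\rot$ by $1$ and raises $\tb$ by $1$; we need $\tb$ to go up by $1$ and we want $\rot$ to go up by $2$, so in fact the comparison is between the Legendrian underlying \eqref{c1values} and the new one, which differ by a single \emph{positive} destabilization followed by reconciling framings — I would work out the precise stabilization count carefully here.) This gives $\langle c_1(\jhat), \ell_1\rangle = a_1 + 3$.

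The case $a_1 = -2$ requires separate treatment: then the first unknot in $P$ has framing $-1$, so $P$ itself is not a Stein filling in the naive way and one must blow down the $(-1)$-sphere $\ell_1$ before getting a Stein handlebody; after blowing down, the remaining chain has framings $a_2 - 1, a_3, \ldots$ (the blow-down adds $1$ to the framing of the adjacent handle if $\ell_1$ meets it), and one re-runs the rotation-number computation on the blown-down picture, then pulls the answer back to $P$ using that blowing down preserves $c_1$ on the surviving classes and that $\langle c_1, \ell_1\rangle$ on the exceptional sphere is forced to be $\pm 1 = a_1 + 3$. I expect the main obstacle to be precisely this sign/stabilization bookkeeping for $\ell_1$ — making sure that the single framing discrepancy between $P$ (framing $a_1 + 1$) and the diagram of Figure \ref{Figure K}(d) (framing $a_1$), together with the choice of which stabilizations are positive versus negative, produces exactly the shift from $a_1 + 1$ (or the absent value in \eqref{c1values}, where $\langle c, \ell_1\rangle = a_1 + 1$) to $a_1 + 3$, and that this is consistent across both the $a_1 \leq -3$ and $a_1 = -2$ cases. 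Everything else is a routine application of Gompf's formula for $c_1$ of a Stein structure together with the algebra of the plumbing lattice already set up in the proof of Lemma \ref{Jevaluationlemma}.
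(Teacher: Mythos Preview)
Your approach matches the paper's: build $\jhat$ as a Stein structure via Legendrian 2-handles with all-negative stabilizations, reading off $c_1$ as rotation numbers, and handle the degenerate case by blowing down first. The $\ell_1$ bookkeeping you flag as the main obstacle is actually immediate and needs no comparison with \eqref{c1values}: an all-negatively-stabilized Legendrian unknot realizing framing $f$ has $\tb = f+1$ and hence $\rot = \tb + 1 = f+2$, so for the first handle (framing $a_1+1$) one gets $\rot = a_1+3$ directly, and for $j\geq 2$ (framing $a_j$) one gets $a_j+2$. Your $\tb = a_j - 1$ is a slip (should be $a_j+1$), as is the claim that blowing down gives adjacent framing $a_2-1$ (it is $a_2+1$, as your own parenthetical says). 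Note also that the paper treats the case $a_1 = \cdots = a_k = -2$ with $k\geq 1$, requiring $k$ successive blow-downs before a Stein structure exists; one then blows back up in the complex category and checks the values of $c_1$ on the exceptional classes.
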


\begin{proof} First suppose $a_1 \leq -3$, so that all surgery coefficients appearing in the diagram for $P$ are at most $-2$. Then $P$ admits a Stein structure obtained by drawing each unknot in the plumbing diagram as a Legendrian, with all-negative stabilizations chosen so that the surgery coefficients are each one less than the corresponding Thurston-Bennequin numbers. As before, the Chern class of the complex structure evaluates on the classes $\ell_j$ as the rotation number, and then it is easy to check the desired statement.

Now suppose that for some $k\in\{1,\ldots, n-1\}$ we have $a_1 = \cdots = a_k = -2$, and $a_{k+1}\leq -3$. Blowing down sequentially, we see that $P \cong P_0\# k\cptwobar$ where $P_0$ is a plumbing of spheres having self-intersection $a_{k+1}+1, a_{k+2},\ldots, a_n$. If $e_1,\ldots, e_k$ are the exceptional spheres corresponding to the blowups, we can write 
\[
\mbox{$\ell_1 = e_1,\,\, \ell_2 = e_2 - e_1, \,\ldots,\, \ell_{k} = e_k- e_{k-1},\,\,\ell_{k+1} = {\ell}'_{k+1} - e_k$,}
\]
where $\ell'_{k+1}$ is the homology class of the sphere of square $a_{k+1} +1$ in $P_0$. There is then a Stein structure $J_{Stein}$ on $P_0$ obtained as above, with $\langle c_1(J_{Stein}), \ell'_{k+1}\rangle = a_{k+1} +3$ and $\langle c_1(J_{Stein}), \ell_j\rangle = a_j + 2$ for $j = k+2,\ldots, n$.
We can then blow up $(P_0, J_{Stein})$ in the complex category to obtain a complex structure $\jhat$ on $P = P_0\# k\cptwobar$. This complex structure has $c_1(\jhat) = c_1(J_{Stein}) - \tilde{e}_1-\cdots - \tilde{e}_k$, where $\tilde{e}_j$ is Poincar\'e dual to the exceptional sphere $e_j$, and moreover $\jhat$ induces the same plane field on $L(q,q-r) = \partial P =\partial P_0$ as $J_{Stein}$ does. It is now easy to check that $\langle c_1(\jhat), \ell_{k+1}\rangle = a_{k+1} +2$, that $\langle c_1(\jhat), \ell_{j}\rangle = 0 = a_j +2$ for $2\leq j \leq k$, and that $\langle c_1(\jhat), \ell_{1}\rangle = 1 = a_1 + 3$ as desired.

The only remaining case, that all $a_j = -2$, is proved easily along the same lines.
\end{proof}

Comparing the Lemma above with \eqref{c1values}, we see that $c_1(\jhat) - c_1(\sdgs) = 2 \ell_1^*$, where $\ell_1^*\in H^2(P;\zee)$ is Kronecker dual to $\ell_1$. Strictly, $\sdgs$ is defined on $P\# \cee P^2$ and here we refer to the restriction of its Chern class on $P$. We know, however, that $c_1(\sdgs)$ evaluates on the generator of $H_2(\cee P^2)$ as 3, and so $c_1^2(\sdgs)_{P\#\cee P^2} = c_1^2(\sdgs)_P + 9$ (using a subscript to indicate the manifold on which we consider a cohomology class).

Recall that for an oriented 2-plane field $\eta$ having torsion Chern class on an oriented 3-manifold $M$, the rational number $d_3(\eta)$ is the ``3-dimensional invariant'' of Gompf \cite{Gompf}, defined by 
\[
4d_3(\eta) = c_1^2(Z,J) - 3\sigma(Z) - 2\chi(Z)
\]
for any almost-complex 4-manifold $(Z,J)$ having $\partial Z = M$ and such that $TM\cap J(TM) = \eta$. From this definition, we find
\begin{eqnarray*}
d_3(\xidgs) &=& \frac{1}{4}(c_1^2(\sdgs)_{P\#\cee P^2} - 3\sigma(P\#\cee P^2) - 2\chi(P\#\cee P^2))\\
&=& \frac{1}{4}(c_1^2(\sdgs)_P - 3\sigma(P) -2\chi(P)) + 1
\end{eqnarray*}
and therefore using $\jhat$ on $P$ to compute $d_3(\xi_{Stein})$ gives
\begin{eqnarray}
d_3(\xi_{Stein}) - d_3(\xidgs) &=& \frac{1}{4}(c_1^2(\jhat)_P - c_1^2(\sdgs)_P) - 1\nonumber\\
&=& (c_1(\sdgs)\cup \ell_1^*) + (\ell_1^*)^2 - 1.\label{d3difference}
\end{eqnarray}
To evaluate this, identify the classes $\ell_j$ with their Poincar\'e duals in $H^2(P;\cue)$ (since we must pass to rational coefficients to evaluate the expressions above), and write $\ell_1^* = \sum m_j\ell_j$ for some coefficients $m_j$. Clearly, $(m_1,\ldots, m_n)$ is the first column of the inverse  to the intersection matrix of $P$. The intersection matrix is $\MM_1$, which is obtained from $M(a_1+1,a_2,\ldots a_n)$ by deleting its first column (c.f. \eqref{Mdef}). From the discussion after \eqref{Mdef}, we have that $\MM_1\y_1 = (y_0,0,\ldots,0)$, where the $y_j$ are as in that discussion and $\y_1$ is the vector $(y_1,\ldots, y_n)$. Dividing through by $y_0$ gives that the first column of $\MM_1^{-1}$ is $(\frac{y_1}{y_0},\ldots, \frac{y_n}{y_0})$, and now it is straightforward to see that 
\[
c_1(\sdgs)\cup \ell_1^* = (a_1+1, a_2+2, \ldots, a_n+2)\cdot (\frac{y_1}{y_0},\ldots, \frac{y_n}{y_0})^T = \frac{1}{y_0}(y_0 - y_1 + 1).
\]
Now note that $(\ell_1^*)^2$ is just the $(1,1)$ entry of $\MM_1^{-1}$, which we have seen to be $\frac{y_1}{y_0} = [a_1+1,a_2,\ldots,a_n]^{-1}$. Returning with this to \eqref{d3difference} proves
\[
d_3(\xi_{Stein}) - d_3(\xidgs) = \frac{1}{y_0}(y_0 - y_1 + 1) + \frac{y_1}{y_0} - 1 = \frac{1}{y_0} = -\frac{1}{q},
\]
where the last equality is the fact that $y_0 = -y = -q$ observed in the proof of Lemma \ref{Jevaluationlemma}.

To relate this to the surgered contact structure $\xi^-_{x/y}$, observe that by additivity of the 3-dimensional invariant,
\[
d_3(\xi^-_{x/y}) - d_3(\xi\#\xidgs) = \frac{1}{4}(c_1^2(\sdgs)_{W_{p/q}} - 3\sigma(W_{p/q}) - 2\chi(W_{p/q})).
\]
Here we assume that $\xi$ and $\xi^-_{x/y}$ have torsion first Chern class. Note that $d_3(\xi\#\xidgs) = d_3(\xi) + d_3(\xidgs) + \frac{1}{2}$, so the previous two equations give
\begin{equation}\label{Jc1}
 \frac{1}{4}(c_1^2(\sdgs)_{W_{p/q}} - 3\sigma(W_{p/q}) - 2\chi(W_{p/q})) = d_3(\xi^-_{x/y}) - d_3(\xi) - d_3(\xi_{Stein}) - \frac{1}{q} - \frac{1}{2}.
 \end{equation}
This last equation essentially characterizes $\sdgs$ in terms of contact geometric data that we can carry to our consideration of $\srobs$.

Turning now to  $\srobs\in\Spinc(-W_{p/q})$, observe that the map induced in $\hfhat$ by $\srobs$ carries the class $c(\xi)\otimes \tilde{c}$ to $c(\xi^-_{x/y})$. Therefore we must have
\begin{eqnarray*}
\deg(\widehat{F}_{-W_{p/q},\srobs}) &=& \deg(c(\xi^-_{x/y})) - \deg(c(\xi)\otimes \tilde{c})\\
&=& -d_3(\xi^-_{x/y}) +d_3(\xi) - \deg(\tilde{c}).
\end{eqnarray*}
In the above we are using the fact that for a contact structure $\xi$ with torsion Chern class the contact invariant lies in degree $-d_3(\xi) - \frac{1}{2}$ of Heegaard Floer homology (of the oppositely-oriented 3-manifold).

On the other hand, the degree shift formula in Heegaard Floer homology tells us
\[
\deg(\widehat{F}_{-W_{p/q},\srobs}) = \frac{1}{4}\left(c_1^2(\srobs)_{-W_{p/q}} - 3\sigma(-W_{p/q}) - 2\chi(-W_{p/q})\right).
\]
%Now, if $p/q > 0$ then $\sigma(-W_{p/q}) = -1$, while $\chi(-W_{p/q}) = 1$. If $(Y,\xi) = (S^3,\xi_{std})$ then $\deg(c(\xi)) = 0$, and combining the previous two formulas we obtain
%\begin{equation}\label{robsc1}
%\frac{1}{4}(c_1^2(\srobs) +1) = -d_3(\xi_{x/y}) - \frac{1}{2} - \deg(\tilde{c}).
%\end{equation}
Hence 
\begin{equation}\label{robsc1}
\frac{1}{4}(c_1^2(\srobs)-3\sigma(-W_{p/q}) -2\chi(-W_{p/q})) = -d_3(\xi^-_{x/y}) +d_3(\xi) - \deg(\tilde{c}).
\end{equation}
Our goal now is to determine the quantity $\deg(\tilde{c})$.

Recall that $\tilde{c}$ is a class in the Floer homology of the 3-manifold represented by the open book $(S'',\phi'')$ on the right side of Figure \ref{Figure H}. The corresponding Heegaard diagram is drawn explicitly in Figure \ref{embHdiag}, where the class $\tilde{c}$ is given by the canonical intersection generator $\x''$, with basepoint $z''$.

\begin{figure}
\includegraphics[width=4.5in]{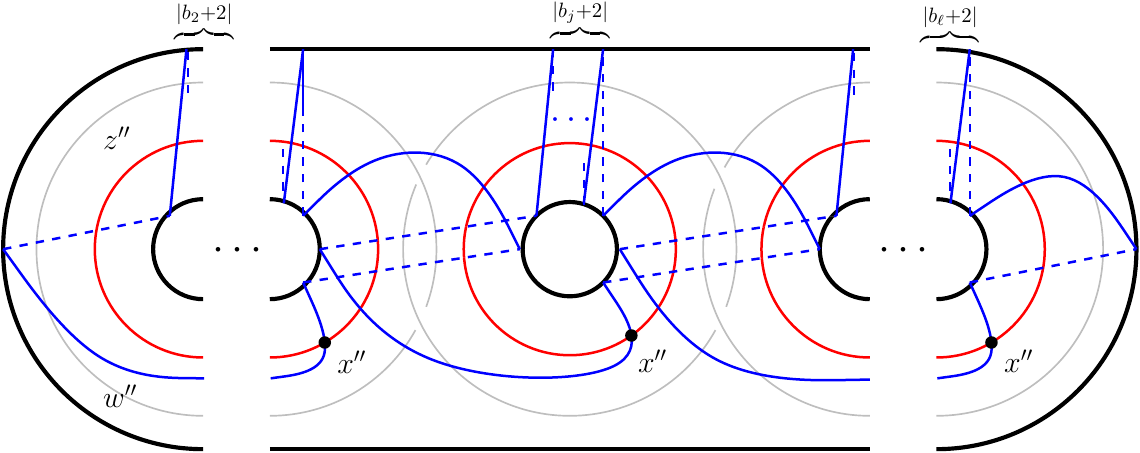}
\caption{\label{embHdiag}A genus $(\ell-1)$ Heegaard diagram arising from the HKM procedure applied to the open book $(S'',\phi'')$ of Figure \ref{Figure H}. It can also be seen as an embedded Heegaard diagram for $L(q,r)$ if the surface is oriented ``inward.'' Here $-q/r = [b_2,\ldots, b_{\ell}]$.}
\end{figure}

It is worth pausing for a moment with this diagram, which we write as $(\Sigma'', \aalpha'', \bbeta'')$. By construction, if equipped with basepoint $w''$, it is an HKM diagram associated to the open book $(S'', \phi'')$ and describing the 3-manifold $L(q,q-r)$. We can also see the diagram as embedded in a surgery picture for $L(q,r)$. Indeed, if $L(q,r)$ is described as surgery on a link of unknots with coefficients $b_j$, where $-q/r = [b_2,\ldots, b_{\ell}]$ for $b_j\leq -2$, we can surround the link by the Heegaard surface $\Sigma''$ as suggested in Figure \ref{embHdiag} (where the link is drawn in light grey). The $\alpha$-circles in the diagram bound disks in the complement, while the $\beta$ circles are exactly $b_j$-framed longitudes of the link components and therefore bound disks after the surgery. We therefore obtain a Heegaard diagram for $L(q,r)$, but notice that in this description the Heegaard surface must be oriented by an inward normal (as the boundary of the $\alpha$-handlebody; this is consistent with $L(q,r) = -L(q,q-r)$). Observe that the surgery coefficients $b_j$ correspond to the number of twists in each $\beta$ circle at the ``top'' of the diagram, which  correspond to the number of parallel copies of each stabilization of $K$ appearing in the Ding-Geiges-Stipsicz algorithm. Explicitly, if $K^{-r}$ appears $n_r$ times in the algorithm for contact $x/y$ surgery on $K$, then for $j = 2,\ldots, \ell-1$ there are $n_r$ extra twists in the $j$-th $\beta$ circle, while there are $n_\ell-1$ extra twists in the $\ell$-th. Correspondingly, the surgery coefficients $b_j$ are given by $b_{j} = -n_j-2$ for $j = 2,\ldots, \ell-1$, while $b_{\ell} = -n_\ell -1$. The special case of $j = \ell$ arises because the last boundary component in $S''$ does not get a Dehn twist from the stabilization procedure, c.f. Figure \ref{Figure H}.

It is straightforward to check that the open book $(S'',\phi'')$ for $L(q,q-r)$ corresponds to the standard (universally tight) contact structure; in particular it is the boundary of the complex structure $\jhat$ on the plumbing manifold $P$ considered before. Hence the intersection point $(\x'', w'')$ represents the contact invariant $c(\xi_{Stein})$ of the universally tight contact structure, which is a generator of the group $\hfhat(-L(q,q-r),\s_{Stein})$. Changing the basepoint gives the element $\tilde{c}$, which is represented by $(\x'',z'')$ and now lies in a different \spinc structure. In fact, by \cite[Lemma 2.19]{OS1}, we see that $\s_{z''}(\x'') = \s_{Stein} + PD[\mu]$, where $\mu$ is a closed circle on the Heegaard diagram dual to the leftmost $\alpha$ circle in Figure \ref{embHdiag} and oriented so as to cross $\alpha$ once when traveling from $w''$ to $z''$. Thinking of the diagram as embedded in the surgery picture for $L(q,r) = -L(q,q-r)$, we have that $\mu$ is just a positively-oriented meridian to the first component of the chain of unknots. 

Recall that the Floer homology group $\hfhat(L(q,r),\s)$ in any \spinc structure $\s$ is 1-dimensional over $\F$, and lies in a grading denoted $d(L(q,r),\s)\in\cue$. These grading levels were computed recursively by Ozsv\'ath and Szab\'o in \cite[Proposition 4.8]{OSabsgrad}; in our orientation convention their formula reads
\begin{equation}\label{dinvts}
d(L(q,r), i) = \frac{1}{4qr}\left(qr - (2i+1-q-r)^2\right) - d(L(r,q), i).
\end{equation}
Here the index $i\in\{0,\ldots, q-1\}$ refers to a particular labeling of the \spinc structures on $L(q,r)$, and in the $d$-invariant on the right we reduce $q$ and $i$ modulo $r$. The labeling is obtained by drawing a genus-1 Heegaard diagram for $L(q,r)$ where the $\alpha$ circle has slope 0 and the $\beta$ circle has slope $-q/r$, then marking the $q$ intersection points sequentially around the $\alpha$ circle, starting with the intersection point adjacent to the basepoint (c.f. Figure 2 of \cite{OSabsgrad}). This diagram can be seen as an ``embedded'' diagram as above, where there is now just a single unknot with coefficient $-q/r$ describing the surgery and the Heegaard surface is oriented inward. The usual sequence of Kirby moves from the integer surgery on a chain to this picture clearly takes the meridian $\mu$ to a meridian of this unknot, and moreover it is easy to see that changing a \spinc structure by adding the dual to $[\mu]$ corresponds to increasing the label by $i\mapsto i+r$. 

 The universally tight contact structure we are denoting by $\xi_{Stein}$ on $L(q,q-r)$ has the property that with the \spinc structure labeling above we have $c(\xi_{Stein})\in \hfhat(L(q,r), i = 0)$. Thus from the discussion above we have
 \[
 \deg(\tilde{c}) - \deg(c(\xi_{Stein})) = d(L(q,r), r) - d(L(q,r), 0) = 1- \frac{1}{q},
 \]
 where the second equality is a simple exercise with \eqref{dinvts}. Now recall that $\deg(c(\xi_{Stein}))=-d_3(\xi_{Stein}) - \frac{1}{2}$. Making this replacement above, and using the result to eliminate $\deg(\tilde{c})$ from \eqref{robsc1} gives
 \begin{eqnarray*}
\ts\frac{1}{4}(c_1^2(\srobs)_{-W_{p/q}} -3\sigma(-W_{p/q}) - 2\chi(-W_{p/q})) &&\\
&&\hspace*{-2in}\ts= -d_3(\xi^-_{x/y}) + d_3(\xi) + d_3(\xi_{Stein}) + \frac{1}{q} - \frac{1}{2}\\
&&\hspace*{-2in}\ts= -\frac{1}{4}(c_1^2(\sdgs)_{W_{p/q}} - 3\sigma(W_{p/q}) - 2\chi(W_{p/q})) -1,
 \end{eqnarray*}
 where we have used \eqref{Jc1}. Observe that $\sigma(-W_{p/q}) = -\sigma(W_{p/q})$ and $\chi(-W_{p/q}) = \chi(W_{p/q}) = 1$ to conclude:
 \[
 c_1^2(\srobs)_{-W_{p/q}} = -c_1^2(\sdgs)_{W_{p/q}}.
 \]
 
 \begin{corollary}\label{c1cor} Suppose that $\xi$ and $\xi^-_{x/y}$ have torsion first Chern class. Then the \spinc structure $\srobs$ carrying $c(\xi)\otimes \tilde{c}$ to $c(\xi^-_{x/y})$ satisfies
 \[
 \langle c_1(\srobs), [S]\rangle = \pm \langle c_1(\sdgs),[S]\rangle.
 \]
for $[S]\in H_2(W_{p/q}; \zee)$ a generator.
\end{corollary}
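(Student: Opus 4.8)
The plan is to deduce the corollary from the identity $c_1^2(\srobs)_{-W_{p/q}} = -c_1^2(\sdgs)_{W_{p/q}}$ obtained just above, using only the fact that on the rational surgery cobordism the self-pairing $c_1^2$ of a \spinc structure whose Chern class is torsion on the boundary is completely determined by its pairing with $[S]$. First I would observe that reversing the orientation of $W_{p/q}$ negates the relative fundamental class $[W_{p/q},\partial W_{p/q}]$, and hence negates $c_1^2$ of any \spinc structure with torsion restriction to $\partial W_{p/q}$; since $-W_{p/q}$ and $W_{p/q}$ carry the same underlying \spinc data, this gives $c_1^2(\srobs)_{-W_{p/q}} = -c_1^2(\srobs)_{W_{p/q}}$, where the right-hand side is now computed on $W_{p/q}$ with its given orientation. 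Combining this with the displayed identity produces the equality $c_1^2(\srobs)_{W_{p/q}} = c_1^2(\sdgs)_{W_{p/q}}$ of self-pairings on one and the same oriented manifold.

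Next I would make the reduction of $c_1^2$ to the pairing with $[S]$ precise. The cobordism $W = W_{p/q}$ is built from $(\partial_- W)\times[0,1]$ by attaching a single $2$-handle, so $H_2(W,\partial_- W;\zee)\cong\zee$ is generated by the relative core class $[F]$, the restriction map $H_2(W;\cue)\to H_2(W,\partial_- W;\cue)$ sends $[S]$ to $q[F]$, and consequently $H_2(W;\cue)$ is spanned by $[S]$ together with the image of $H_2(\partial W;\cue)$. Under the hypotheses of the corollary, $c_1(\sdgs)$ restricts to the torsion classes $c_1(\tt_\xi)$ on $Y$ and $c_1(\tt_{\xi_{x/y}})$ on $Y_{p/q}(K)$, and to a torsion class on the lens space factor automatically (lens spaces are rational homology spheres); likewise $c_1(\srobs)$ is torsion on $\partial(-W_{p/q})$, hence on $\partial W_{p/q}$ as well. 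Therefore each of $c_1(\sdgs)$, $c_1(\srobs)$ admits a lift to $H^2(W,\partial W;\cue)$ whose Poincar\'e dual pairs trivially with the image of $H_2(\partial W;\cue)$, and so
\[
c_1^2(\s)_{W_{p/q}} = \frac{\langle c_1(\s),[S]\rangle^2}{[S]^2}, \qquad [S]^2 = pq,
\]
the value $[S]^2 = pq$ being the standard self-intersection of the capped-off Seifert surface in a rational surgery cobordism (see Section~\ref{cobordismssec} and \cite{OSratsurg}). Assuming $pq\neq 0$, the equality of $c_1^2$ from the previous paragraph forces $\langle c_1(\srobs),[S]\rangle^2 = \langle c_1(\sdgs),[S]\rangle^2$, which is precisely the asserted equality up to sign. (If $pq = 0$ the torsion hypothesis forces $\langle c_1(\sdgs),[S]\rangle = \langle c_1(\srobs),[S]\rangle = 0$, so the statement is trivial in that degenerate case.)

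The only genuinely geometric ingredient — and hence the step I expect to require the most care — is the displayed formula for $c_1^2$, that is, the claim that $[S]$ together with $\mathrm{im}(H_2(\partial W;\cue))$ spans $H_2(W;\cue)$ and that the relative lift of a Chern class which is torsion on $\partial W$ annihilates that image. Both facts are immediate from the long exact sequence of the pair $(W,\partial W)$ and the definition of $c_1^2$ for such \spinc structures, and once they are in hand the rest of the argument is simply the bookkeeping already carried out in the paragraphs preceding the corollary.
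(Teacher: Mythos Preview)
Your argument is correct and follows essentially the same route as the paper: both deduce the corollary from the displayed identity $c_1^2(\srobs)_{-W_{p/q}} = -c_1^2(\sdgs)_{W_{p/q}}$ by observing that, for $p\neq 0$, the nondegenerate part of the intersection form on $W_{p/q}$ is one-dimensional and generated by $[S]$, so $c_1^2$ is determined by $\langle c_1,[S]\rangle^2/[S]^2$. You spell out the orientation-reversal step and the formula for $c_1^2$ more explicitly than the paper does, but the content is the same.

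The one minor divergence is the degenerate case $p=0$: the paper argues that here $\srobs$ and $\sdgs$ are determined by their common restriction to $Y_{p/q}(K)$ (so they actually agree, giving sign $+1$, and no torsion hypothesis is needed), whereas you invoke the torsion hypothesis to force both pairings to vanish. Your argument is valid for the corollary as stated; the paper's is slightly stronger since it applies even when $c_1(\xi_{x/y})$ is non-torsion, which is the typical situation for $p=0$.
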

This follows since, unless $p/q = 0$, the nondegenerate part of the intersection form of $W_{p/q}$ is 1-dimensional, corresponding to the class $[S]$. If $p/q=0$, then much of the above discussion does not apply (since typically $\xi^-_{x/y}$ will have non-torsion Chern class), but the conclusion of the corollary still holds simply because in this case $\srobs$ and $\sdgs$ are determined by their restriction to $Y_{p/q}(K)$, where they agree---in particular, in this case we get that the sign is $+1$.

Note that the corollary implies that $\srobs$ and $\sdgs$ are equal up to conjugation (and possibly elements of order 2 in $H_2(W_{p/q};\zee)$). Furthermore, since $\srobs$ and $\sdgs$ both restrict to $\s_{\xi^-_{x/y}}$on $Y_{p/q}(K)$, it follows that unless $\s_{\xi^-_{x/y}}$ is self-conjugate the sign appearing in the corollary must be $+1$. In practice we are usually interested in algebraic properties of the map induced by $\srobs$ in Floer homology (e.g., injectivity), and since these are insensitive to conjugation the sign is immaterial.

\subsection{HKM Strong}\label{strongsec}

In order to apply the naturality property of reducible open book surgeries (Theorem \ref{naturalitythm}), and complete the proof of Lemma \ref{surgcoblemma} we must verify that Heegaard diagrams of the sort in Figure \ref{embHdiag}, arising from planar open book decompositions $(S'',\phi'')$ as on the right of Figure \ref{Figure H}, are HKM strong: that is, the canonical generator for Heegaard Floer homology is the only one in its \spinc structure. This is essentially independent of the rest of our arguments, and here we reproduce the relevant Heegaard diagram with more natural notation. Figure \ref{embHdiag2} shows an ``embedded'' Heegaard diagram describing the lens space $L(x_0, x_1)$, where $\frac{x_0}{x_1} = [c_1,\ldots, c_n]$ and each $c_j \geq 2$ (the Heegaard surface should be considered as oriented by an inward-pointing normal). As before, the diagram can be seen either as arising from the  description of $L(x_0,x_1)$ as the result of surgery along a chain of unknots---the ``embedded'' picture---or from the HKM procedure applied to a planar open book decomposition. The intersection point $\V = (v_1,\ldots, v_n)$ is the canonical generator in the latter description.

\begin{figure}
\includegraphics[width=4.5in]{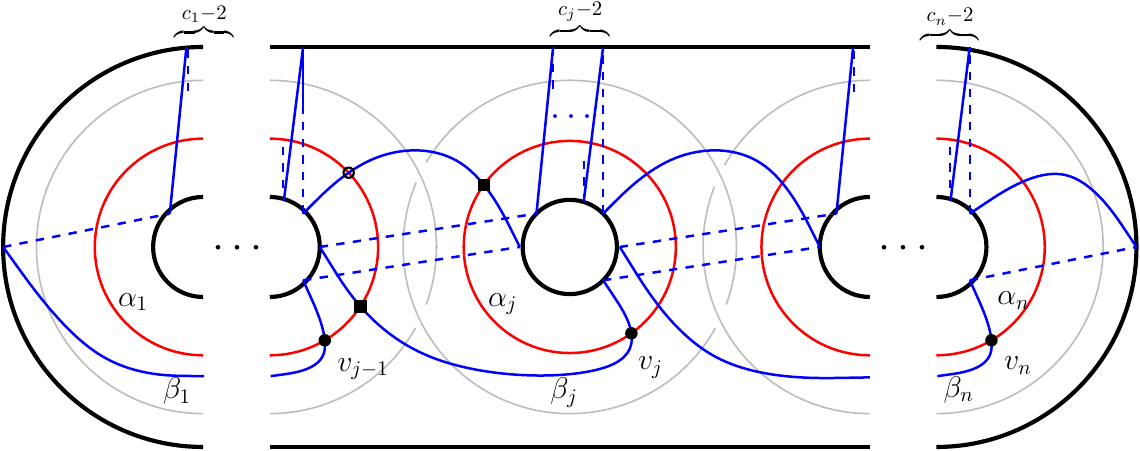}
\caption{\label{embHdiag2}}
\end{figure}

We number the $\alpha$ and $\beta$ curves in the diagram from left to right as shown. Observe that each $\beta_j$ intersects only $\alpha_{j-1}$, $\alpha_j$, and $\alpha_{j+1}$, and has just one intersection with each of $\alpha_{j-1}$ and $\alpha_{j+1}$ (with obvious modifications if $j = 1$ or $n$). If $\U\in \Ta\cap\Tb$ is a Heegaard Floer generator, write $\U = (u_1,\ldots, u_n)$ where $u_j\in \alpha_j\cap \beta_{\sigma(j)}$ for a permutation $\sigma$. We will say that $\U$ is a ``paired generator'' if $\sigma(j) = j$ for each $j$, and a ``non-paired'' generator otherwise; similarly an individual intersection point between $\alpha$ and $\beta$ curves is ``paired'' if it lies on $\alpha_j\cap \beta_j$ for some $j$. Our first observation is that a non-paired generator is always in the same \spinc equivalence class as a paired one. Indeed, by the way in which the $\beta$ curves intersect the $\alpha$'s, it is easy to see that a non-paired generator must have components $u_{j-1}$, $u_j$ appearing as the intersections marked with squares in Figure \ref{embHdiag2}, for possibly several values of $j$.  For such $j$ there is an obvious quadrilateral in the figure connecting  $u_{j-1}, u_j$ with two paired intersection points, namely $v_j$ and the intersection point marked with an open circle. Hence to determine \spinc equivalence classes of generators, it suffices to consider only paired generators. 

Recall that the difference between the \spinc structures induced by intersection points $\U,\WW\in \Ta\cap\Tb$ is measured by a class $\epsilon(\U,\WW)\in H_1(L(x_0,x_1);\zee)$ that is the union of 1-chains on the Heegaard surface: one traces paths on $\alpha$-circles from the components of $\U$ to the components of $\WW$, then returns to $\U$ along the $\beta$ circles. We wish to see that for any (paired) generator $\U$ distinct from the canonical generator $\V$, the class $\epsilon(\V, \U)$ is nonzero.

Let $\U$ be a paired generator. If $u_j$ is a component of $\U$ that is distinct from $v_j$, we can construct a 1-cycle $\epsilon(u_j)$ by following $\alpha_j$ counterclockwise in the diagram from $v_j$ to $u_j$, then turning right to follow $\beta_j$ back to $v_j$. Orient the components of the chain of unknots (light grey in Figure \ref{embHdiag2}) counterclockwise and write $\mu_j$ for the oriented meridian of the $j$-th component: then by inspecting Figure \ref{embHdiag2} we see $\epsilon(u_j)$ is homologous to $r_j(\U)\mu_j - \mu_{j+1}$, for some integer $r_j(\U)$ with $1\leq r_j(\U)\leq c_j - 1$. If the coordinate $u_j$ coincides with $v_j$, we set $\epsilon(u_j) = 0$. Then we have
\[
\epsilon(\V, \U) = \sum_j \epsilon(u_j) = \sum_j n_j \mu_j,
\]
for some coefficients $n_j$ satisfying $-1\leq n_j\leq c_j - 1$ for each $j$. 

To understand the class in $H_1(L(x_0,x_1);\zee)$ corresponding to this element, observe that the linking matrix coming from the surgery diagram is a presentation matrix for $H_1(L(x_0,x_1);\zee)$ in which the meridians $\mu_1,\ldots,\mu_n$ provide a generating set. This linking matrix is
\[
\left[\begin{array}{rrrrr} -c_1 & 1 & & & \\ 1 & -c_2 &  & & \\ & & \cdots & & \\ & & & & 1 \\ & & & 1 & -c_n \end{array}\right] \sim \left[\begin{array}{ccccc} 0 & 1 & & & \\ & 0& 1 & & \\ & & & \cdots & \\ & & & & 1 \\ -x_0 & -x_1 & -x_2 & & -x_{n-1} \end{array}\right]
\]
where we have applied a sequence of column operations, and the $x_j$ satisfy the recursive relations $x_{n+1} = 0$, $x_n = 1$ and $x_j = c_{j+1}x_{j+1} - x_{j+2}$ for $0\leq j \leq n-2$ (c.f. \eqref{recursion}). In particular, the order of the homology is $x_0$, and the lens space is $L(x_0,x_1)$. From the second version of the presentation matrix we see $\mu_n$ is a generator for $H_1(L(x_0,x_1);\zee) \cong \zee/x_0\zee$, and for $1\leq j \leq n-1$ we have $\mu_j = x_j \mu_n$. Thus in $H_1$, 
\[
\epsilon(\V,\U) = \left(\sum_j n_j x_j\right) \mu_n
\]
and to see this is nontrivial it suffices to show that $0< \sum n_j x_j < x_0$ for the coefficients $n_j$ arising from a paired generator $\U\neq \V$. 

We have observed already that $-1\leq n_j\leq c_j - 1$ for each $j$. By considering the classes $\epsilon(u_j)$ described above, it is easy to say a little more:
\begin{itemize}
\item[(a)] For $j\geq 2$, we have $n_j \leq c_j - 2$ unless $\epsilon(u_{j-1}) = 0$.
\item[(b)] If $\epsilon(u_{j}) =0$ then either $n_j = 0$ or $n_j = -1$, where the latter occurs if and only if $\epsilon(u_{j-1}) \neq 0$.
\item[(c)] If $n_j  = -1$ for some $j$, then there exists $j' < j$ with $n_{j'} >0$, and moreover $n_{j''} = 0$ for all $j' < j''< j$.
\end{itemize}

{\it Claim 1:} If $\epsilon(\V,\U) = \left(\sum n_j x_j\right) \mu_n$ as above, then $\sum n_j x_j > 0$. 

Since not all $n_j$ can vanish if $\U\neq \V$, this is obvious if all $n_j \geq 0$. Suppose $j_0$ is the largest index with $n_{j_0} = -1$; then we have $\ds\sum_{j\geq j_0} n_j x_j \geq -x_{j_0}$. If $j_1< j_0$ is the next smaller index with $n_{j_1}>0$ as in point (c) above, then $\sum_{j\geq j_1} n_j x_j \geq x_{j_1} - x_{j_0} >0$ since the $x_j$ form a strictly decreasing sequence. Repeat this argument inductively.

{\it Claim 2:} If $\sum n_j x_j$ is a linear combination of the integers $x_j$ with $n_1 \leq c_1 - 1$ and $n_j \leq c_j-2$ for all $j>1$, then $\sum n_j x_j < x_0$. 

To see this use the recursive relations among the $x_j$ to write 
\[
\sum_j n_j x_j \leq (c_1-1)x_1 + (c_2-2)x_2 + \cdots (c_n - 2)x_n= x_0 - x_n < x_0.
\]

Now suppose $\epsilon(\V,\U)= (\sum n_j x_j) \mu_n$. We have just seen that if $n_j \leq c_j -2$ for $j\geq 2$ then $\epsilon(\V,\U)\neq 0$ in the first homology of $L(x_0,x_1)$, while we know $n_j \leq c_j -1$ for all $j$. It is easy to see that if $n_{k} = c_{k} - 1$ for some $k>1$ then necessarily $\epsilon(u_{k-1}) = 0$, and hence from point (b) above we know $n_{k-1}$ is either 0 or $-1$. In the second case we have
\begin{eqnarray*}
\sum_j n_j x_j &=& \cdots + n_{k-2} x_{k-2}  - x_{k-1} + (c_{k}-1)x_{k} + n_{k+1} x_{k+1} + \cdots \\
&< & \cdots + n_{k-2} x_{k-2} + n_{k+1} x_{k+1} + \cdots ,
\end{eqnarray*}
where we have used $x_{k-1} = c_{k} x_{k} - x_{k + 1} > (c_{k}-1)x_{k} $, following from the recursion for the $x_j$ and the fact that the $x_j$ are decreasing. In the last expression the coefficients of $x_{k - 1}$ and $x_{k}$, both being zero, are no more than $c_{k - 1} -2$ and $c_{k} -2$, respectively.

Suppose $j_0$ is the largest index such that $n_{j_0} = c_{j_0} -1$. As we have just seen, if the preceding coefficient $n_{j_0 -1} = -1$, we can bound the sum $\sum n_j x_j$ by a linear combination of $x_j$ in which two more of the coefficients satisfy the hypothesis of Claim 2 above, then repeat the argument (a little thought shows that the new sum does in fact arise from a difference class $\epsilon(\V,\U)$ for some $\U$, but regardless it is clear that the coefficients $n_j$ for $j< j_0-1$ are unchanged and therefore still satisfy (a), (b), and (c) above). In the other case, that $n_{j_0-1} = 0$, we have $\epsilon(u_{j_0-2}) = 0$ and therefore $n_{j_0-2}$ is also either 0 or $-1$. Continuing inductively, we can therefore bound $\sum n_j x_j$ either by a combination of $x_j$ that satisfies the hypotheses of Claim 2, or a sum satisfying those hypotheses except that for some $j_0$ we have $n_1 = n_2 = \cdots = n_{j_0-1} = 0$ and $n_{j_0} = c_{j_0} -1$. In the latter case, simply observe
\begin{eqnarray*}
\sum n_j x_j &=& (c_{j_0} -1)x_{j_0} + n_{j_0 + 1}x_{j_0 +1} + \cdots\\
& <&  x_{j_0-1} + n_{j_0 + 1}x_{j_0 +1} + \cdots\\
& \leq& x_1  + n_{j_0 + 1}x_{j_0 +1} + \cdots,
\end{eqnarray*}
and since the coefficient of $x_1$ is $1 \leq c_1 -1$, the last expression satisfies the hypotheses of Claim 2 and is therefore less than $x_0$. This completes the proof that the diagram of Figure \ref{embHdiag2} is HKM strong.

\subsection{General surgery coefficient}\label{gencoeffsubsec}

The results of the preceding subsections suffice to prove Theorem \ref{naturalitythmintro} in the case that the contact surgery coefficient $\frac{x}{y}$ is at least 1. To allow general positive coefficients we use the following result, which generalizes \cite[Proposition 2.4]{LS2011} (see also \cite{conway14}).

\begin{lemma} Let $K\subset (Y,\xi)$ be an oriented nullhomologous Legendrian knot and $0<\frac{x}{y}\in \cue$ a positive contact framing. Then the contact structure $\xi^-_{x/y}(K)$ obtained by contact $\frac{x}{y}$ surgery along $K$ is isomorphic to the contact structure $\xi^-_{1+x/y}(K^-)$ obtained by contact $1+\frac{x}{y}$ surgery along the negatively stabilized knot $K^-$.
\end{lemma}

\begin{proof} Let $r\in\zee$ be the minimal positive integer with $\frac{x}{y-rx} <0$, and write the continued fraction expansion $\frac{x}{y-rx} = [a_1,\ldots, a_n]$ as in Theorem \ref{DGSalgorithm}. Observe that $\frac{x}{y}+1 = \frac{x+y}{y} >1$, and the relevant continued fraction expansion for this surgery coefficient is $\frac{x+y}{y-(x+y)} = -\frac{x+y}{x} = -1-\frac{y}{x} = [b_1,\ldots, b_m]$ for some $b_j$. It is easy to see that in fact
\[
[b_1,\ldots, b_m] = [-(r+1), a_1,\ldots, a_n]
\]
(just note $-(r+1) - \frac{1}{\ts\frac{x}{y-rx}} = -1-\frac{y}{x}$). Therefore, having placed $K$ on the page of an open book supporting $\xi$ as before, we compare the results of the DGS algorithm applied to $\frac{x}{y}$ surgery on $K$ and to $1+\frac{x}{y}$ surgery on $K^-$:
\begin{itemize}
\item For $\frac{x}{y}$ surgery on $K$, we apply $r$ left-handed Dehn twists along curves parallel to $K$ in the page, then right handed Dehn twists along stabilized pushoffs according to the coefficients $[a_1,\ldots, a_n]$.
\item For $1+\frac{x}{y}$ surgery on $K^-$, we apply a single left Dehn twist along $K^-$ in the page, then right handed Dehn twists along stabilized pushoffs according to $[b_1,\ldots, b_m]$, where as just observed we have $b_1 = -(r+1)$, and $(b_2,\ldots, b_m ) = (a_1,\ldots,a_n)$ (in particular $m = n+1$).
\end{itemize}
\begin{figure}[t]
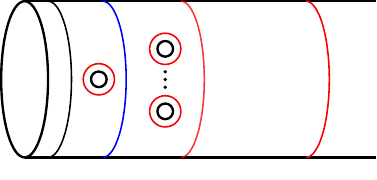
\caption{\label{relationfig1}Open book describing $1+\frac{x}{y}$ contact surgery along the negatively stabilized knot $K^-$. The diagram is to be interpreted as a subsurface of the page of the open book, compare to Figure \ref{Figure G}. The knot $K$ is indicated by the black circle at the left of the diagram (and does not get a Dehn twist). Circles in red indicate that a right Dehn twist is performed while the blue circle corresponding to $K^-$ indicates a left Dehn twist.}
\end{figure}

The situation for $1+\frac{x}{y}$ surgery on $K^-$ is illustrated in Figure \ref{relationfig1}. To relate this to the diagram describing $\frac{x}{y}$ surgery on $K$, we recall the {\it daisy relation} from \cite{EMVHM, olgaVHM2010}. In our context, the relation described in \cite[Figure 2]{EMVHM} or \cite[Figure 11]{olgaVHM2010} can be drawn as in Figure \ref{daisyrelation}. 
\begin{figure}[b]
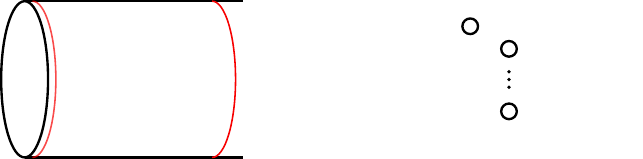
\caption{\label{daisyrelation}The daisy relation between compositions of right Dehn twists, drawn on a cylinder with $r+1$ holes. The order of twists on the curves on the right is from ``bottom to top,'' but that will not be important for our purpose.}
\end{figure}
Moving the parallel twists around the left boundary component in that figure to the other side of the relation, it is easy to see that Figure \ref{relationfig1} is equivalent to Figure \ref{relationfig2}(a). There is clearly an arc connecting different boundary components of the page and intersecting $c_r$ exactly once, which we can use to destabilize the open book. Then a similar situation holds sequentially for the curves $c_{r-1}, c_{r-2}, \ldots$ , so we can make a sequence of destabilizations to obtain Figure \ref{relationfig2}(b). The latter open book is just the description of contact $\frac{x}{y}$ surgery on $K$.
\end{proof}

\begin{figure}
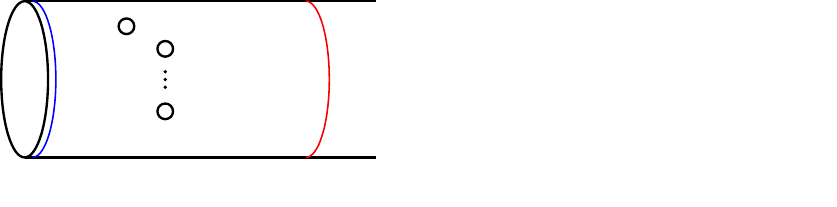
\caption{\label{relationfig2}In (a), the result of applying the daisy relation to the open book described in Figure \ref{relationfig1}. Diagram (b) results from sequential destabilizations along the circles $c_1,\ldots c_r$; observe in (b) we see $r$ left Dehn twists along $K$ and a right Dehn twist along $K_2$, which is an $|a_1+1|$-stabilized copy of $K$.}
\end{figure}

\begin{proof}[Proof of Theorem \ref{naturalitythmintro}] First assume $\frac{x}{y}\geq 1$. Then the first part of Theorem \ref{naturalitythmintro} is Theorem \ref{naturalitythmspec}, whose proof is now complete. In particular, the \spinc structure on $-W$ that has $F_{-W, \s}(c(\xi)\otimes \tilde{c}) = c(\xi^-_{x/y})$ is $\srobs$, coming from the reducible open book surgery result Theorem \ref{naturalitythm}. The second part of Theorem \ref{naturalitythmintro} follows from Lemma \ref{Jevaluationlemma} determining the evaluation of $c_1(\sdgs)$ on the homology generator $[S]$ of the surgery cobordism, together with Corollary \ref{c1cor} that shows $\srobs$ and $\sdgs$ have the same pairing with $[S]$ (up to sign, and in the torsion case).

If $0<\frac{x}{y}< 1$, then by the preceding lemma $c(\xi_{x/y}^-(K)) = c(\xi_{1+x/y}^-(K^-))$, in obvious notation. Applying the part of the theorem just proved to the contact $1+\frac{x}{y}$ surgery, note that  the smooth surgery coefficient $\frac{p}{q}$ corresponding to contact $\frac{x}{y}$ surgery on $K$ is the same as that corresponding to contact $1+\frac{x}{y}$ surgery on $K^-$. Hence the corresponding surgery cobordisms $W$ are smoothly identical, and the same is true for the homomorphisms $F_{-W}$. Furthermore, since $\rot(K^-) - \tb(K^-) = \rot(K) -\tb(K)$, we see that the \spinc structure on $W$ specified in the case of $1+\frac{x}{y}$ surgery is the same as the one claimed in part 2 of Theorem \ref{naturalitythmintro} for $\frac{x}{y}$ surgery.
\end{proof}

Finally, we spell out the proof of Corollary \ref{cor1}. We recall the statement: suppose $\K\subset (S^3,\xi_{std})$ is a Legendrian knot in the standard contact structure on $S^3$ and write $K$ for the underlying smooth knot type. Fix a contact surgery coefficient $\frac{x}{y}>0$ and let $\frac{p}{q}$ be the corresponding smooth surgery coefficient. We claim that under the identification of $\hfhat(-S^3_{p/q}(K))$ with $H_*(\XX_{-p/q}(-K))$, where $-K$ is the mirror of $K$, the contact invariant $c(\xi^-_{x/y})$ (or possibly its conjugate) is given by the image in homology of the inclusion
\[
(k, {B})\to \XX_{-p/q}(-K)
\]
where 
\[
2k = (\rot(\K) - \tb(\K) + 1)q -2.
\]
Put another way, the contact invariant is equal to the image in homology of the copy of ${B}$ in $\XX_{-p/q}(-K)$ that is the target of the map $v_k :(k,{A}_s(-K))\to {B}$, where $s = \lfloor \frac{k}{q}\rfloor = \frac{1}{2}(\rot(\K) - \tb(\K) - 1)$.

\begin{proof}[Proof of Corollary \ref{cor1}] By the argument in the proof of Theorem \ref{naturalitythmintro} above, it suffices to assume $\frac{x}{y}\geq 1$. Let $W_{p/q}(K): -L(q,r)\to S^3_{p/q}(K)$ be the rational surgery cobordism of Figure \ref{Figure P}. According to Theorem \ref{naturalitythmspec}, the manifold $-W_{p/q}(K) = W_{-p/q}(-K)$ equipped with the \spinc structure $\srobs$ carries the generator $\tilde{c}\in \hfhat(L(q,r))$ to the contact invariant $c(\xi^-_{x/y})\in \hfhat(-(S^3_{p/q}(K)))$. Moreover, by Lemma \ref{Jevaluationlemma} and Corollary \ref{c1cor}, $\srobs$ has the property that (possibly replacing $\srobs$ by its conjugate)
\[
\langle c_1(\srobs), [\widetilde{S}]\rangle = p + (\rot(\K) -\tb(\K))q - 1.
\]
By Theorem \ref{ratsurgformula} (c.f. Corollary \ref{inclusioncor}), the map induced by $\srobs$ corresponds to the inclusion of $(k,{B})$ where $k$ is characterized by $\langle c_1(\srobs), [S]\rangle -p + q - 1 = 2k$ since we are working on $W_{-p/q}(-K)$. The conclusion follows.

\end{proof}

\section{Proof of Theorem \ref{cinvariantthm}}\label{lastsection}

Recall that Theorem \ref{cinvariantthm} asserts necessary and sufficient conditions for the nonvanishing of the contact invariant $c(\xi_{x/y}^-)$ obtained by contact $\frac{x}{y}$ surgery on a Legendrian in $S^3$. Using Corollary \ref{cor1}, our approach to the proof will be to characterize the conditions under which the inclusion of $(k, B)$ in the mapping cone $\XX_{-p/q}(-K)$ induces a nontrivial map in homology, where 
\[
k = -\frac{1}{2}(\tb(\K) -\rot(\K) + 1)q + q -1
\]
(c.f. \eqref{kchar}). Recall that $(k,B)$ is the target of the maps $v_{\lfloor k/q\rfloor}: A_{\lfloor\frac{k}{q}\rfloor}\to B$ and $h_{\lfloor(k+p)/q\rfloor}: A_{\lfloor\frac{k+p}{q}\rfloor} \to B$, and note that $\lfloor \frac{k}{q}\rfloor = -\frac{1}{2}(\tb(\K) - \rot(\K) + 1)$, while
$\lfloor\frac{k+p}{q}\rfloor = -\frac{1}{2}(\tb(\K) - \rot(\K) +1) + 1 + \lfloor\frac{p-1}{q}\rfloor$.

\begin{lemma}\label{claim} For $k$ as above,  
\begin{itemize}

\item $v_{\lfloor k/q\rfloor}$ is trivial in homology if and only if both 
\[
\nu(-K) = -\tau(K) + 1 \quad\mbox{and}\quad \frac{1}{2}(\tb(\K) -\rot(\K) + 1) =\tau(K).
\]
\item $h_{\lfloor(k+p)/q\rfloor}$ is trivial in homology if and only if 
\[
\frac{p}{q} > -\nu(-K) + \frac{1}{2}(\tb(\K) -\rot(\K) + 1).
\]
\end{itemize}
\end{lemma}

\begin{proof} Recall that the knot invariant $\nu(K)$ is defined to be the smallest value of $s$ such that $v_s: A_s\to B$ is surjective in homology. It is known that in fact $v_{s*}$ is surjective for all $s\geq \nu(K)$, and vanishes for $s <\nu(K)$. By symmetries of the knot complex (more specifically, conjugation invariance of maps induced by cobordisms together with \cite[Theorem 2.3]{OSintsurg}), the map $h_{s*}$ is surjective if and only if $v_{-s*}$ is surjective, and therefore $h_{s*}$ is trivial if and only if $s > -\nu(K)$. Since we are considering the mapping cone for the mirror image $-K$, we find that  $v_{\lfloor k/q\rfloor}$ is trivial in homology if and only if $-\frac{1}{2}(\tb(\K) - \rot(\K) + 1) <\nu(-K)$. Now, it is also known that $\nu(-K)$ equals either $-\tau(K)$ or $-\tau(K) +1$. On the other hand, by Plamenevskaya's result \eqref{plamresult} we always have the inequality $-\frac{1}{2}(\tb(\K) - \rot(\K) + 1) \geq -\tau(K)$, from which the first part of the lemma follows.

Turning to the second part, we have that $h_{\lfloor (k+p)/q\rfloor}$ vanishes in homology if and only if the $v$-map with ``opposite'' domain does. This is equivalent to
\begin{eqnarray*}
\ts-\lfloor \frac{k+p}{q}\rfloor < \nu(-K) &\iff &  \ts\frac{1}{2}(\tb(\K) - \rot(\K) + 1) - 1 - \lfloor\frac{p-1}{q}\rfloor < \nu(-K)\\
&\iff & \ts \lfloor \frac{p-1}{q}\rfloor > -\nu(-K) + \frac{1}{2}(\tb(\K) - \rot(\K) + 1) - 1\\
&\iff & \ts\frac{p}{q} > -\nu(-K) + \frac{1}{2}(\tb(\K) - \rot(\K) + 1),
\end{eqnarray*}
which is the second part of the lemma.
\end{proof}

We now prove the first claim of Theorem \ref{cinvariantthm}, which is that $c(\xi_{x/y}^-)$ vanishes if $\frac{1}{2}(\tb(\K) - \rot(\K) +1 ) < \tau(K)$. From the Lemma, this assumption means that $v_{\lfloor k/q\rfloor}$ is surjective in homology. We claim that there is a cycle  $a\in \Akq$ such that $v_{\lfloor k/q\rfloor*}([a])$ is a generator of $H_*(B)$, while $h_{\lfloor k/q\rfloor*}([a]) = 0$, which clearly shows that the generator of the homology of $(k,B)$ vanishes in the homology of $\XX_{-p/q}(-K)$. To see the claim, recall that $\Akq$ is the subquotient of the knot Floer complex typically described as $C\{\max(i,j-\lfloor\frac{k}{q}\rfloor) = 0\}$. Here $i$ and $j$ are the two filtrations on the knot Floer complex $CFK^{\infty}(-K)$; for details see \cite{OSknot}. By definition of $\tau$, there is a cycle $a$ in the vertical complex $B = C\{i = 0\}$ that is supported in $C\{i = 0, j\leq \tau(-K)\}$ and generates the homology of $B$.  Our assumption says that $\lfloor\frac{k}{q}\rfloor = -\frac{1}{2}(\tb(\K) -\rot(\K) + 1) > \tau(-K)$, hence $a$ can be considered as a cycle in $\Akq$, and since it lies in the subcomplex with $j < \lfloor\frac{k}{q}\rfloor$, it vanishes under $h_{\lfloor k/q\rfloor}$. This proves part (1) of Theorem \ref{cinvariantthm}.

Now we turn to part 2 of the theorem. The relevant portion of the mapping cone appears as:
\[
\begin{diagram}
(k,A_{\lfloor\frac{k}{q}\rfloor}) && (k+p, A_{\lfloor\frac{k+p}{q}\rfloor}) \\
\dTo^{v} & \ldTo_{h} \\
(k, B)
\end{diagram}
\]
We have drawn this figure under the assumption that $\frac{p}{q} > 0$, but the arguments are insensitive to this condition.

The condition $\nu(-K) = -\tau(K) + 1$ is equivalent to $\epsilon(K) = 1$, and therefore the assumptions in 2(a) of the theorem---that $\epsilon(K) = 1$ and $\tb(K) - \rot(K) = 2\tau(K) -1$---are equivalent to the condition that $v_{\lfloor k/q\rfloor}$ (which is to say $v_{\tau(-K)}$, given the assumptions) vanishes in homology. Hence in this case we get 
\[
\begin{diagram}
(k, A_{\tau(-K)}) && (k+p,A_{\tau(-K) + 1 + \lfloor\frac{p-1}{q}\rfloor} )\\
\dDashto^{v_* = 0} & \ldTo_{h} \\
(k,B)
\end{diagram}
\]
Turning to $h_{\lfloor (k+p)/q\rfloor}$, observe that by Lemma \ref{claim} the condition $\frac{p}{q} > 2\tau(K) - 1$ in 2(a) is equivalent in this situation to the condition that $h_{\lfloor (k+p)/q\rfloor}$ vanishes in homology. Hence for such $\frac{p}{q}$, the mapping cone picture becomes
\[
\begin{diagram}
(k,A_{\tau(-K)}) && (k+p, A_{s})  & \mbox{\quad where $s\geq -\tau(-K)$}\\
\dDashto^{v_* = 0} & \ldDashto_{h_*=0} \\
(k,B)
\end{diagram}
\]
and it is now clear that the generator of the homology of $(k,B)$ survives as a nonzero class in the homology of the mapping cone. This proves that in 2(a), the condition that the smooth surgery coefficient $\frac{p}{q} = \frac{x}{y} + \tb(\K)$ be greater than $2\tau(K) - 1$ suffices to ensure that $c(\xi_{x/y}^-) \neq 0$.

For the converse, we must see that $c(\xi_{x/y}^-) = 0$ when $\frac{x}{y} + \tb(\K)= \frac{p}{q} \leq 2\tau(K) - 1$. Observe that if $c(\xi_r^-) = 0$ for some $r>0$, then the contact invariant of $\xi_s^-$ vanishes for all $0<s<r$: indeed, for such $s$ the result of contact $r$ surgery is obtained from the contact $s$ surgery by a sequence of Legendrian surgeries, which preserve nonvanishing of the contact invariant. Hence it suffices to assume $\frac{p}{q} = 2\tau(K) - 1$ (this simplifies the arguments to follow somewhat, though they go through in general). In particular, we have $q = 1$. The portion of the mapping cone near $(k,B)$ now appears as follows, where we write $\bar{\tau}$ for $\tau(-K)$, and observe $\tau(-K) +p = -\tau(-K) -1$: 
\[
\begin{diagram}
A_{\btau} && A_{-\btau -1} \\
\dTo^{v_{\btau}} & \ldTo_{h_{-\btau -1}} & \dTo_{v_{-\btau -1}} \\
B & & \cdots
\end{diagram}
\]
Since $h_s$ is onto in homology if and only if $v_{-s}$ is onto, which is always true for $-s \geq \btau +1$ (recall we are working in the mapping cone for $-K$), we have that $h_{-\btau - 1}$ above is onto.  Hence, as long as there is a class $a$ in $H_*(A_{-\btau-1})$ with ${h_{-\btau -1}}_*(a) \neq 0$ and ${v_{-\btau -1}}_*(a) = 0$, we see that $c(\xi_r^-)$ is a boundary in $\XX_{-p/q}(-K)$. We claim this is always the case. Observe there is always a cycle $y\in C\{j = -\btau -1, i <0\}$  generating the homology of $C\{j = -\btau - 1\}$ by definition of $\tau$: indeed, this is the ``horizontal version'' of the statement that the homology of the vertical complex $C\{i = 0\}$ is generated by a cycle supported in $C\{i = 0, j < \btau + 1\}$. But such a chain $y$ then clearly determines a cycle in $A_{-\btau -1}$ that is carried to a generator of homology by $h_{-\btau - 1}$ and is in the kernel of $v_{-\btau -1}$. (Note that this argument does not actually require $\epsilon(K) = 1$, but we will need to prove stronger vanishing statements in other cases.) This completes the proof of 2(a) of Theorem \ref{cinvariantthm}.

For the remaining cases, the following will be useful.

\begin{lemma}\label{epsilonlemma} A knot $K\subset S^3$ has $\epsilon(K) = 0$ if and only if the chain maps
\[
v_\tau: A_{\tau(K)} \to B \quad\mbox{and}\quad h_\tau : A_{\tau(K)}\to B
\]
induce the same nontrivial map in homology (with coefficients in $\F$).
\end{lemma}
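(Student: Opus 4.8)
The plan is to route everything through Hom's structural description of $CFK^\infty(K)$ together with the identification of $v_s$ and $h_s$ with the vertical and horizontal projections. Recall that $A_s = C\{\max(i,j-s)=0\}$ and $B = C\{i=0\}\cong \cfhat(S^3)$, that $v_s\colon A_s\to B$ is restriction to the $\{i=0\}$ column, and that $h_s\colon A_s\to B$ is projection to $\{j=s\}$ followed by the canonical $U$-translation to $\{j=0\}$ and the ``flip'' chain homotopy equivalence $C\{j=0\}\to C\{i=0\}$ coming from the symmetry of the knot complex; note this last map induces an isomorphism on $H_*$, hence the identity of $\F$. Following \cite{Hom1}, fix a basis of a reduced model of $CFK^\infty(K)$ that is simultaneously vertically and horizontally simplified, and let $x_0$ be the distinguished generator, i.e. the unique basis vector whose class generates $H_*(C\{i=0\})=\F$; then the Alexander grading of $x_0$ is $\tau(K)$, and we place $x_0$ at $(i,j)=(0,\tau(K))$. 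Two facts about $x_0$ will be used: first, if $\epsilon(K)\neq -1$ then $x_0$ is a cycle in $A_{\tau(K)}$, since in this case its only outgoing differentials in $CFK^\infty$ are diagonal and so leave $A_{\tau(K)}$; second, Hom's definition of $\epsilon$ says precisely that $\epsilon(K)=1$ if $x_0$ is the target of a horizontal arrow, $\epsilon(K)=-1$ if it is the source of one, and $\epsilon(K)=0$ otherwise.

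For the ``if'' direction, suppose $v_\tau$ and $h_\tau$ induce the same nonzero map $H_*(A_{\tau(K)})\to H_*(B)=\F$; this common map is then onto, so $v_{\tau*}$ is onto. By definition of $\nu(K)$ this gives $\nu(K)\leq \tau(K)$, hence $\nu(K)=\tau(K)$ and therefore (as recalled above) $\epsilon(K)\neq -1$. If we had $\epsilon(K)=1$, then $x_0$ would be a horizontal boundary, so its image under the horizontal projection $A_{\tau(K)}\to C\{j=\tau(K)\}$ is a boundary and $h_{\tau*}[x_0]=0$, whereas $v_{\tau*}[x_0]=[x_0]\neq 0$ in $H_*(C\{i=0\})$; this contradicts $v_{\tau*}=h_{\tau*}$. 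Hence $\epsilon(K)=0$.

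For the ``only if'' direction, suppose $\epsilon(K)=0$. Then $\tau(K)=0$ by \cite{Hom1}, so $A_{\tau(K)}=A_0$ and $x_0$ sits at the corner $(0,0)$. Since $CFK^\infty(K)=\F[U,U^{-1}]\otimes V$ for $V$ the span of the chosen basis, each basis vector $x_k$ contributes exactly one generator $U^{m_k}x_k$ of $A_0$, with $m_k=\max(0,A(x_k))$; so a cycle $z\in A_0$ may be written $z=\sum_k c_k U^{m_k}x_k$ with $c_k\in\F$. Then $v_0(z)$ is the subsum over $\{k:A(x_k)\leq 0\}$ (regarded in $C\{i=0\}$) and $h_0(z)$ the subsum over $\{k:A(x_k)\geq 0\}$ (regarded in $C\{j=0\}$). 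Because $x_0$ is unpaired vertically (it is the vertical generator) and unpaired horizontally ($\epsilon(K)=0$), in each of $C\{i=0\}$ and $C\{j=0\}$ the homology class of any cycle equals its $x_0$-coefficient times $[x_0]$; since $A(x_0)=0$ lies in both index ranges, the $x_0$-coefficient of $z$, of $v_0(z)$, and of $h_0(z)$ all agree and equal $c_0$. Thus $v_{0*}[z]=c_0[x_0]=h_{0*}[z]$ for every cycle $z$, so $v_{0*}=h_{0*}$, and this map is nonzero since $v_{0*}[x_0]=[x_0]\neq 0$.

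The step I expect to be the main obstacle is justifying the one genuine input from Hom's machinery used above, namely the existence of a basis for a reduced model of $CFK^\infty(K)$ that is simultaneously vertically and horizontally simplified; granting this the remainder is bookkeeping. If one prefers to avoid simultaneous simplification, the alternative is to argue directly that $\ker v_{0*}$ and $\ker h_{0*}$ are both the codimension-one subspace of $H_*(A_0)$ cut out by the ``$x_0$-coefficient'' functional, which carries the same content but requires tracking the chain homotopy relating a vertically simplified basis to a horizontally simplified one.
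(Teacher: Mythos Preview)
Your proposal has a genuine gap, which you yourself correctly identify: the existence of a basis for (a reduced model of) $CFK^\infty(K)$ that is \emph{simultaneously} vertically and horizontally simplified is not established in the literature and is not known to hold in general. Hom proves only the weaker statement that there exists a horizontally simplified basis containing a particular element $x_0$ which is also the distinguished element of \emph{some} (possibly different) vertically simplified basis. Your ``only if'' argument uses simultaneous simplification essentially: you need the homology class of a cycle in $C\{i=0\}$ and in $C\{j=0\}$ to both be detected by the same ``$x_0$-coefficient'' functional, but if the two simplified bases differ then the coefficient of $x_0$ in an expression for $z$ depends on which basis you use, and the argument breaks down. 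The alternative you sketch at the end has the same problem.

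The paper avoids this entirely. For the reverse implication (ruling out $\epsilon=\pm 1$), the argument is essentially yours, but phrased so that only Hom's weaker result is needed: when $\epsilon(K)=1$, the element $x_0$ satisfies $\dvert x_0 = 0$ (distinguished vertical element) and $\dhorz x_0 = 0$ (since $x_0 = \dhorz x_j$ in the horizontally simplified basis), so $x_0$ is a cycle in $A_\tau$ with $v_{\tau*}[x_0]\neq 0$ and $h_{\tau*}[x_0]=0$. For the forward implication, rather than computing in a basis, the paper introduces the ``dual'' maps $v'_\tau\colon H_*(C\{i=0\})\to H_*(A'_\tau)$ and $h'_\tau\colon H_*(C\{j=\tau\})\to H_*(A'_\tau)$ (where $A'_\tau = C\{\min(i,j-\tau)=0\}$) and uses the commutative square
\[
v'_\tau \circ v_\tau = h'_\tau \circ h_\tau
\]
together with the characterization of $\epsilon(K)=0$ as $v'_{\tau*}$ surjective. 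If some $a$ had $v_{\tau*}(a)\neq 0$ but $h_{\tau*}(a)=0$, commutativity would force $v'_{\tau*}=0$, contradicting $\epsilon=0$; hence $\ker h_{\tau*}\subset \ker v_{\tau*}$, and since both are codimension one they coincide. This diagram chase requires no choice of basis at all and is what you should substitute for your ``only if'' argument.
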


\begin{proof} First recall that $h_s$ is defined as the composition of the quotient map $A_s\to C\{j = s\}$, followed by a certain chain homotopy equivalence between the latter complex and $B = C\{i = 0\}$. At the level of homology, however, there is a unique identification $H_*(j = s) = H_*(B) = \F$. Since we are interested in the map on homology, for the purposes of this proof we consider $h_s$ to simply be the quotient map to $C\{j = s\}$, and leave implicit the homotopy equivalence.

For an integer $s$, we have the two complexes
\[
A_s = C\{\max(i, j-s) = 0\} \quad\mbox{and}\quad A'_s = C\{\min(i,j-s) = 0\},
\]
which are subquotient complexes of the $\zee\oplus\zee$-filtered knot complex $CFK^\infty(K)$. Between these, in a sense, is the vertical complex $B = C\{i = 0\}$, and we have natural chain maps
\[
v_s : A_s \to B \quad\mbox{and}\quad v'_s : B\to A'_s
\]
given by a quotient followed by inclusion in each case. We have already seen the first of these; the other map $v_s'$ is given by the quotient $C\{i = 0\}\to C\{i \geq s\}$ followed by the inclusion of the latter as a subcomplex of $A'_s$. Both of these maps have interpretations as maps induced by surgery cobordisms: $v_s$ is induced by a cobordism $S^3_N(K)\to S^3$ for $N\gg 0$, while $v'_s$ arises from a cobordism $S^3\to S^3_{N}(K)$ for $N\ll 0$.  The invariant $\epsilon(K)$ can be defined by considering the maps induced in homology by $v_s$ and $v_s'$ for the value $s = \tau(K)$:
\begin{itemize}
\item $\epsilon(K) = 1$ if and only if $v_\tau$ is surjective and $v_\tau'$ is trivial in homology.
\item $\epsilon(K) = 0$ if and only if both $v_\tau$ and $v'_\tau$ are surjective in homology.
\item $\epsilon(K) = -1$ if and only if $v_\tau$ is trivial and $v'_\tau$ is surjective in homology.
\end{itemize}
It is shown by Hom in \cite{Hom1}, where the invariant $\epsilon$ is introduced, that these are the only possibilities for the behavior of $v_\tau$ and $v'_\tau$. Moreover, it is always true that $v_s$ is surjective in homology for $s\geq \tau + 1$ and trivial for $s< \tau(K)$, and $v_s'$ is surjective for $s'\leq\tau-1$ and trivial for $s >\tau(K)$.

We have a commutative diagram
\[\begin{diagram}
&H_*(A_\tau) & \rTo^{h_\tau} & H_*(j = \tau)&\cong\F \\
&\dTo^{v_\tau} &\hspace*{2cm}& \dTo_{h'_\tau}\\
\F \cong& H_*(i = 0) & \rTo_{v'_\tau} & H_*(A'_\tau),
\end{diagram}
\]
where $h'_\tau$ is the map in homology induced by the quotient $C\{j = \tau\}\to C\{j = \tau, i\geq 0\}$ followed by inclusion as a subcomplex in $A'_\tau$. 

Now assume $\epsilon(K) = 0$, so in particular $v_\tau$ is surjective in homology. By Proposition 3.6 (2) of \cite{Hom1} the vanishing of $\epsilon$ also implies $\tau(K) = 0$, and hence by symmetry $h_\tau$ is surjective as well. If there is a class $a \in H_*(A_\tau)$ such that $v_\tau(a) \neq 0$ while $h_\tau(a) = 0$, the diagram above shows that $v'_\tau$ is necessarily trivial, contrary to the assumption $\epsilon(K) = 0$. Hence $\ker (h_\tau)\subset \ker(v_\tau)$, and since both kernels are codimension 1 in $H_*(A_\tau)$ they are identical. Therefore the maps in homology induced by $v_\tau$ and $h_\tau$ are surjective maps to $\F$ having the same kernel, which proves the forward implication of the lemma.

For the reverse implication, observe that if $\epsilon(K)  = -1$ then by definition $v_\tau$ is trivial in homology. Hence it suffices to assume $\epsilon(K) = 1$, so that $v_\tau$ is surjective in homology, and prove that there is a class $a\in H_*(A_\tau)$ with $v_\tau(a) \neq 0$ but $h_\tau(a) = 0$. For this, we recall from \cite{homsummand} another characterization of $\epsilon$ in terms of ``simplified bases'' for the knot Floer complexes.

The knot Floer complex $(CFK^\infty(K),\partial^\infty)$ is $\zee\oplus \zee$-filtered, which means that the chain groups are bigraded and the differential is nonincreasing in both gradings. In particular the boundary map $\partial^\infty$ decomposes as a sum of homogeneous maps, and we denote by $\dvert$ and $\dhorz$ the sum of components that preserve the first grading or the second, respectively. A filtered basis $\{x_i\}$ for $CFK^\infty$ is {\it vertically simplified} if for each $i$ exactly one of the following is true:
\begin{itemize}
\item When expressed as a combination of basis elements, $\dvert x_i$ has exactly one nonzero term.
\item $x_i$ appears as a nonzero term of $\dvert x_j$ for exactly one $j$, and $\dvert x_j = x_i$.
\item $\dvert x_i = 0$, and $x_i$ does not appear in the basis expression of $\dvert x_j$ for any $j$.
\end{itemize}
There is a similar definition for a {\it horizontally simplified} basis. Such bases (vertically or horizontally simplified) give rise to bases for $C\{i = 0\}$ and $C\{j = 0\}$ respectively, with the property that there is a unique basis element satisfying the third condition above: this is because both complexes have homology $\F$. Such an element is called the ``distinguished element'' of the basis. Hom shows that there is always a horizontally simplified basis for $CFK^\infty$ with a particular element $x_0$, which is  the distinguished element of some {\it vertically} simplified basis (in general, the latter basis must be different from the former). Then the assumption $\epsilon(K) = 1$ is equivalent to the condition that this $x_0$ is equal to $\dhorz x_j$ for some $x_j$ in the horizontally simplified basis. 

Let $\{x_j\}$ be such a horizontally simplified basis for $CFK^\infty$ and consider the associated basis for $A_\tau$. We assume $\epsilon(K) = 1$, so that $v_\tau: H_*(A_\tau)\to H_*(i=0)$ is surjective. By definition of $\tau$, a generator of $H_*(i=0)$ lies in the subcomplex $C\{i=0, j\leq \tau\}$, but in a vertically simplified basis such a generator is the distinguished element $x_0$. Hence we can consider $x_0$ as an element of $A_\tau$ as well, and we note $\partial_{A_\tau} x_0 = \dvert x_0 = \dhorz x_0 = 0$, the last equality following from the fact that $x_0 = \dhorz x_j$ when $\epsilon(K) = 1$. Thus $x_0$ determines a cycle in $A_\tau$ such that ${v_\tau}_*[x_0]$ is a generator of $H_*(B)$. On the other hand ${h_\tau}_*[x_0]$ is nothing but the homology class of $x_0$ thought of in the horizontal complex $C\{j = 0\}$, which clearly vanishes. The element $x_0\in A_\tau$ represents the desired homology class $a$.
\end{proof}

We now return to the proof of part (2) of Theorem \ref{cinvariantthm}, and so assume $\tb(\K) - \rot(\K) = 2\tau(K) - 1$. This means that we are interested in the image in homology of the inclusion of $(k, B)$ in $\XX_{-p/q}(-K)$, where $k = -\frac{1}{2}(\tb(\K) -\rot(\K) + 1)q + q - 1 = -q\tau(K) +q -1$. As we saw before, since $k/q = -\tau(K) + 1-1/q$, we have $\lfloor\frac{k}{q}\rfloor = -\tau(K) = \tau(-K)$ while $\lfloor\frac{k+1}{q}\rfloor = \tau(-K) + 1$.

First consider the case $\epsilon(K) = -1$ as in 2(c) of Theorem \ref{cinvariantthm}; we must see that $c(\xi_{x/y}^-) = 0$.
The mapping cone near $(k,B)$ looks like
\[
\begin{diagram}
 && A_{\btau}\\ %&& A_{\btau + p} \\
 & \ldTo_{h_{\btau}} & \dTo_{v_{\btau}}\\% & \ldDashto_{h_{\btau + p} = 0} \\
\cdots & & B
\end{diagram}
\]
We have that $v_{\btau}$ is onto homology, since $\epsilon(-K) = -\epsilon(K) = 1$. By Lemma \ref{epsilonlemma}, there is in fact a class $a\in H_*(A_{\btau})$ with ${v_{\btau}}_*(a) \neq 0$ and ${h_{\btau}}_*(a) = 0$, proving that $c(\xi_{x/y}^-)$ is a boundary in $\XX_{-p/q}(-K)$.

Finally we turn to the case $\epsilon(K) = 0$. Here $\tau(K) = \nu(K) = \nu(-K) = 0$, and part 2(b) of Theorem \ref{cinvariantthm} is equivalent to the assertion that the contact invariant $c(\xi_{x/y}^-)$ is nontrivial if and only if the smooth surgery coefficient $\frac{p}{q} = \frac{x}{y} + \tb(\K)$ is nonnegative. For $\frac{p}{q} > 0$, the mapping cone reads:
\[
\begin{diagram}
 \cdots && A_{\btau}&& A_{s} \\
 \dTo& \ldTo_{h_{\btau}} & \dTo_{v_{\btau}} & \ldTo_{h_{s}} \\
(k-p,B) & & (k, B)
\end{diagram}
\]
where $s = \lfloor\frac{k + p}{q}\rfloor$. Since necessarily $s > -\nu(-K)$, the map $h_{s}$ is trivial in homology. On the other hand, $v_\btau$ is surjective in homology; in fact by Lemma \ref{epsilonlemma} $v_\btau$ and $h_\btau$ give the same surjection in homology. Hence any cycle in $A_\btau$ that is mapped to the generator $c(\xi_r^-)$ of the homology of $(k, B)$ is also mapped onto the generator of homology of $(k-p,B)$, and we conclude only that these two generators are homologous in the mapping cone. If $p>q$, then the vertical map to $(k-p,B)$ has domain $A_{s'}$ with $s' < \btau$, and hence is trivial in homology. This shows that $c(\xi_r^-)$ determines a nonzero class in the homology of $\XX_{-p/q}(-K)$. For general $p>0$, a similar argument holds with a longer ``sawtooth'' picture demonstrating the nontriviality of $c(\xi_r^-)$. 

If $p<0$, the diagram above becomes
\[
\begin{diagram}
A_{\lfloor\frac{k-j|p|}{q}\rfloor} & & \phantom{A_{\lfloor\frac{a}{b}\rfloor}}& & A_{\lfloor\frac{k}{q}\rfloor} && A_{\lfloor\frac{k+|p|}{q}\rfloor}& \\
\dTo & \rdTo^{h_j} & \cdots&\rdTo(2,2) & \dTo^{v_{\btau}}&\rdTo^{h} & \dTo^{v} & \rdDashto^{h = 0}\\
(k -j|p|, B) &&&& (k, B) && (k+|p|, B)&&\cdots
\end{diagram}
\]
where we choose $j$ to be the smallest integer such that $\lfloor\frac{k-j|p|}{q}\rfloor = \lfloor\frac{k}{q}\rfloor-1$. In particular, all intermediate complexes $A_{\lfloor \frac{k - j'|p|}{q}\rfloor}$ are copies of $A_{\btau}$. Since $\lfloor\frac{k}{q}\rfloor =\btau = \nu(-K) = 0$ the vertical map to $(k-j|p|, B)$ is trivial in homology, while the map labeled $h_j$ is a homology surjection. The remaining solid arrows in the diagram are surjections in homology having the same kernel at $A_\btau$, given our assumptions and Lemma \ref{epsilonlemma}. It follows easily that the homology generator of $(k,B)$ is trivial in the homology of $\XX_{-p/q}(-K)$.

Finally for $\frac{p}{q} = 0$, the only maps in $\XX_{-p/q}(-K)$ interacting with $(k,B)$ are $v_\btau, h_\btau: A_\btau \to B$. These are nontrivial, but give the same map in homology by Lemma \ref{epsilonlemma}---hence their sum vanishes in homology with coefficients in $\F$, and the class determined by the homology generator of $(k,B)$ is nontrivial in the homology of $\XX_{-p/q}(-K)$.

This completes the proof of Theorem \ref{cinvariantthm}.

\section{Rational Surgery Mapping Cone}\label{section5}

In this section we describe the proof of the second part of Theorem \ref{ratsurgformula}, and along the way prove some facts that were used at isolated points in the preceding. This section is nearly independent of the rest of the paper, and follows lines that will be familiar to experts; we assume a reasonable familiarity with Heegaard Floer theory as in \cite{OSknot,OSintsurg, OSratsurg}. 

\subsection{Rationally nullhomologous knots}\label{cobordismssec}
Let $M$ be a compact oriented 3-manifold with connected boundary diffeomorphic to a torus. It is a standard exercise that the inclusion induces a homomorphism $H_1(\partial M;\zee)\to H_1(M;\zee)$ with kernel isomorphic to $\zee$, and hence up to orientation there is a unique isotopy class of simple closed curve $\nu\subset\partial M$ and integer $c>0$ such that $c[\nu]$ generates this kernel. We can find a curve $\eta\subset \partial M$ dual to $\nu$, and we orient $\nu$ and $\eta$ such that, with the natural orientation on $\partial M$, the intersection number $\nu.\eta$ is $+1$. It follows that $\eta$ represents a class in $H_1(M;\zee)$ of infinite order. 

Let $Y$ be the closed 3-manifold obtained by Dehn filling of $M$ along a curve $\mu$ homologous to $s\nu + t\eta$, for relatively prime $s$, $t$ with $t>0$ and $s\neq 0$. Observe that $H_1(Y) = H_1(M)/[\mu]$. The core of the filling torus gives rise to a knot $K\subset Y$, whose homology class we can describe as follows. Choose integers $s',t'$ with $s't-t's = 1$; then a longitude of $K$ is given by the curve $\lambda=s'\nu + t'\eta$. It is now easy to see that $K$ is rationally nullhomologous in $Y$, of order $q:= ct$.

Turning this around, if $K\subset Y$ is a knot that is rationally nullhomologous of order $q$, then there is a well-defined isotopy class of curve $\nu$ on the boundary of $\textrm{nbd}(K)$ and an integer $c>0$ dividing $q$ such that $c\nu$ represents a generator of the kernel of the map on homology induced by the inclusion $\partial (\textrm{nbd}(K))\to M = Y-\textrm{nbd}(K)$. Likewise, we have a canonically-determined meridian $\mu$ of $K$ with $\nu.\mu = t:= q/c$ on $\partial M$. 

We have some choice in selecting a longitude $\lambda$ for $K$, in that any curve of the form $\lambda + k\mu$ is also a longitude. On the other hand, for given $\lambda$ we can write $\nu = t\lambda + r\mu$ for some uniquely determined $r$. Hence, a canonical longitude $\lambda_{can}$ for $K$ is specified by requiring
\[
\nu = t\lambda_{can} + r\mu \quad \mbox{where $0\leq r<t$.}
\]
Observe that while there is some ambiguity in the choice of $\eta$ in the discussion above, if the pair $(Y,K)$ is given (with $K$ rationally nullhomologous of order $q$), then the meridian $\mu$ and the  curve $\nu$ on $\partial (\textrm{nbd}(K))$ are canonically determined, and the canonical longitude $\lambda_{can}$ is then uniquely specified by the equation above.

Using $\mu,\lambda_{can}$ as coordinates for $\partial(\textrm{nbd}(K))$ we can consider an integer (``Morse'') surgery on $K$ with surgery coefficient $m$. Equivalently, the surgery $Y_m(K)$ is given by Dehn filling of $M$ along the curve $\lambda_{can} + m\mu$.

\begin{lemma}\label{selfintlemma} The natural 2-handle cobordism $W_{\lambda_{can}}: Y\to Y_{\lambda_{can}}(K)$ has $H_2(W_{\lambda_{can}};\zee)/H_2(Y;\zee) \cong \zee$, generated by the homology class of a surface $S_{\lambda_{can}}$ having self-intersection $-qcr$. 

More generally, the second homology of the surgery cobordism corresponding to $\lambda_m = \lambda_{can} + m\mu$ is generated by the class of a surface $S_{\lambda_m}$ with self-intersection $q(mq-cr)$.
\end{lemma}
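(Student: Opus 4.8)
The plan is to read off $H_2(W_{\lambda_m};\zee)$ from the long exact sequence of the pair, then to write down an explicit surface in the generating class built from a rational Seifert surface for $K$ together with meridian disks, and finally to compute its square by a pushoff argument in which all the intersection points are pushed into the incoming end $Y$. For the first step, I would use that $W_{\lambda_m}$ deformation retracts onto $Y\cup_K D^2$, so that $H_2(W_{\lambda_m},Y;\zee)\cong\zee$ is generated by the class $[F]$ of the core disk $F$ of the $2$-handle, and the connecting homomorphism $\partial\colon H_2(W_{\lambda_m},Y)\to H_1(Y)$ sends $[F]$ to $[K]$, which has order $q$; note that this map, and hence the computation, does not depend on $m$. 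The long exact sequence then gives $H_2(W_{\lambda_m};\zee)/H_2(Y;\zee)\cong\ker\partial=\zee\cdot q[F]$, so it suffices to exhibit a surface $S_{\lambda_m}$ whose image in $H_2(W_{\lambda_m},Y)$ is $q[F]$ and to compute $[S_{\lambda_m}]^2$.

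Next, let $N$ be a tubular neighborhood of $K$, put $M=Y\setminus\mathrm{int}\,N$, and let $R\subset M$ be a rational Seifert surface, i.e. a properly embedded surface with $\partial R$ equal to $c$ parallel copies of $\nu$ on $\partial N$ (this exists because $c[\nu]=0$ in $H_1(M)$, as in \S\ref{cobordismssec}). Since $\nu=t\lambda_{can}+r\mu$ and $q=ct$, we have $[\partial R]=q[\lambda_{can}]+cr[\mu]$ in $H_1(\partial N)$, whereas the $q$ parallel copies of the $\lambda_m$-framed longitude of $K$ — which are the boundaries of $q$ parallel copies $F_1,\dots,F_q$ of the core of the $2$-handle — have class $q[\lambda_{can}]+qm[\mu]$. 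The discrepancy $(qm-cr)[\mu]$ bounds in $N$, so after band-summing along $\partial N$ I obtain a surface $G=R\cup(qm-cr)D_\mu\subset Y$ (meridian disks of $K$, taken with reversed orientation if the coefficient is negative) whose boundary is exactly these $q$ longitudes. Capping the $F_i$ off with $G$, via a collar annulus $\bigsqcup K\times[0,1]$ running down to $Y\times\{0\}$, produces a closed surface $S_{\lambda_m}$; since $G$ lies in $Y$ it maps to $0$ in $H_2(W_{\lambda_m},Y)$, so $[S_{\lambda_m}]$ maps to $q[F]$ and hence generates $H_2(W_{\lambda_m})/H_2(Y)$.

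Finally, I would compute $[S_{\lambda_m}]^2$ as the Poincar\'e--Lefschetz pairing of the image $q[F]$ of $[S_{\lambda_m}]$ in $H_2(W_{\lambda_m},\partial W_{\lambda_m})$ with $[S_{\lambda_m}]\in H_2(W_{\lambda_m})$, representing $q[F]$ by $q$ disjoint parallel copies $\widehat F_j=F_j'\cup(c_j\times[0,1])$ of the core, each pushed off inside the handle and then extended down to $Y\times\{0\}$ by an annulus on the $\lambda_m$-framed pushoff $c_j$ of $K$; because the core is attached with framing $\lambda_m$, one can arrange $c_j\subset\mathrm{int}\,N$. A small isotopy makes $\{\widehat F_j\}$ and $S_{\lambda_m}$ transverse with no intersections except that each $c_j\times\{0\}$ meets $G$ inside $Y\times\{0\}$; and since $c_j\subset\mathrm{int}\,N$ is disjoint from $R\subset M$, this amounts to $c_j\cdot(qm-cr)D_\mu=qm-cr$, because a longitude crosses a meridian disk exactly once. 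Summing over $j$ gives $[S_{\lambda_m}]^2=q(mq-cr)$, the case $\lambda_{can}$ being $m=0$. The step requiring care is the orientation/sign bookkeeping: choosing orientations of $F$, $R$ and the generator compatibly (as set up in the introduction) so that each crossing contributes $+1$ and the meridian-disk count is genuinely $qm-cr$, and verifying that the collar annuli and the pushed-off cores really can be separated away from this single family of intersection points — which is exactly where the relations $\nu=t\lambda_{can}+r\mu$ and $q=ct$ are used.
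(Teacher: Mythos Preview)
Your argument is correct and is essentially the ``direct geometric construction'' that the paper mentions but does not spell out. The paper offers only a sketch, pointing to two routes: either examine the exact sequences of the pair $(W_{\lambda_m},Y)$ and triple $(W_{\lambda_m},\partial W_{\lambda_m},Y)$, or build $S_{\lambda_m}$ by hand; in either case the paper singles out as the ``key observation'' that the dual knot (the surgery core) in $Y_{\lambda_m}$ has order $|c(mt-r)|$ in homology. Your proof uses the exact sequence of the pair for the rank statement and then the explicit surface-plus-pushoff construction for the self-intersection, so it follows the second route and does not invoke the order of the dual knot at all.

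The difference is minor but worth noting. The exact-sequence approach the paper alludes to would compute $[S_{\lambda_m}]^2$ by observing that $[S_{\lambda_m}]$ maps to $q[F]$ in $H_2(W_{\lambda_m},Y)$ and to $c(mt-r)[F']$ in $H_2(W_{\lambda_m},Y_{\lambda_m})$ (where $[F']$ is the cocore class), so that the square is the product $q\cdot c(mt-r)=c^2t(mt-r)=q(mq-cr)$, with the sign fixed by the core--cocore pairing. Your pushoff argument reaches the same number via the meridian-disk count $qm-cr$ directly, which is perhaps more transparent geometrically but requires the care you already flag regarding orientations and separating the collar annuli from the intersection locus. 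Both are standard; yours has the advantage of exhibiting the surface explicitly, which is useful elsewhere in the paper (e.g.\ in the periodic-domain discussion of Figure~\ref{figure}).
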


This can be proven by examining the exact sequences for the pair $(W_{\lambda_m}, Y)$ and the triple $(W_{\lambda_m}, \partial W_{\lambda_m}, Y)$, or by a direct geometric construction of the surface $S_{\lambda_m}$. In either case the key observation is that in $Y_{\lambda_m}$ the induced knot (the core of the surgery) has order $|c(mt-r)|$ in homology.

\begin{remark} The integer $c$ is called the ``multiplicity'' of $K$ by Baker-Etnyre \cite{BErational} and is equal to the number of boundary components of a rational Seifert surface for $K$. It is not hard to check that the quantity $cr$ appearing in the Lemma is equal modulo $q$ to the intersection number between a pushoff of $K$ and its rational Seifert surface, i.e., it is essentially the numerator of the $\cue/\zee$ valued self-linking of $[K]$. More precisely, if $\ell$ is the representative in $[0,1)$ of $\lk_{\cue/\zee}([K],[K])$, then $cr = q\ell$.
\end{remark}

Now fix a framing $\lambda = \lambda_m$ for the rationally null-homologous knot $K\subset Y$. It is easy to see that the surgery cobordism $W_\lambda$ has $H_2(W_\lambda, Y;\zee)\cong \zee$, generated by a relative cycle $[F_\lambda]$ (represented by the core of the 2-handle), and moreover we can arrange that under the natural map $\iota: H_2(W_\lambda)\to H_2(W_\lambda, Y)$ we have
\begin{equation}\label{conv1}
\iota([S_\lambda]) = q[F_\lambda].
\end{equation}
Hence for a class $\alpha\in H^2(W_\lambda)$, we can define an evaluation of $\alpha$ on $[F_\lambda]$ by
\[
\langle\alpha, [F_\lambda]\rangle = {1\over q}\langle\alpha, [S_\lambda]\rangle \in \cue.
\]
Similarly, we have a rational number 
\[
[F_\lambda].[F_\lambda] = {1\over q^2}[S_\lambda].[S_\lambda].
\]
In particular if $\lambda = \lambda_m$ as above, then $[F_\lambda].[F_\lambda] = (mq-cr)/q$. 
\begin{remark} Strictly, we should work with an oriented knot $K$. Then longitudes and meridians are taken to be oriented by standard conventions; likewise the surface $S_\lambda$ inherits an orientation from a rational Seifert surface for $K$. While we are mostly interested in the case that $Y$ is a rational homology sphere, for the general case we suppose a choice of rational Seifert surface has been fixed once and for all. We stick with these conventions henceforth, but without further mention.
\end{remark}
Now let $(\Sigma, \aalpha,\bbeta,w,z)$ be a Heegaard diagram adapted to the knot $K\subset Y$. Recall that this means $\beta_g$ corresponds to a meridian of $K$, and the basepoints $w$ and $z$ lie to either side of $\beta_g$. %We arrange our orientation conventions such that $m_p = n_w - n_z$. 
Choose any framing $\lambda$ for $K$, and consider the corresponding set of attaching curves $\ggamma$, all obtained by small Hamiltonian translation of the $\beta$ curves, except that $\gamma_g$ represents the framing $\lambda$ (this conforms to the usual conventions of the theory, but reverses the role of $\gamma$ and $\beta$ curves as compared to Section \ref{robssection}). For any Heegaard Floer generator $\x\in\Ta\cap \Tb$, we define a rational number
\[
f(\x) = \langle c_1(\s_w(\psi)), [F_\lambda]\rangle + [F_\lambda].[F_\lambda] - 2(n_w(\psi) - n_z(\psi)).
\]
Here $\psi\in\pi_2(\x,\Theta_{\beta\gamma},\x')$ is any triangle connecting the generator $\x$ to some intersection point $\x'\in\Ta\cap\Tg$.

\begin{lemma} \label{flemma}The function $f(\x)$ is independent of the choice of $\psi$, $\x'$, and $\lambda$, i.e., it depends only on $\x$.
\end{lemma}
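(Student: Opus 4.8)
The plan is to show that each of the three quantities $\psi$, $\x'$, and $\lambda$ on which $f(\x)$ appears to depend in fact does not affect the value, by handling them in that order. First I would fix $\lambda$ and $\x'$ and vary the triangle $\psi\in\pi_2(\x,\Theta_{\beta\gamma},\x')$. Any two such triangles differ by a periodic domain: concretely $\psi - \psi'$ is obtained by splicing in a sum of $\alpha\beta$, $\beta\gamma$, and $\alpha\gamma$ periodic domains, or equivalently by a doubly-periodic domain plus a triply-periodic class. This changes $s_w(\psi)$ by a class Poincar\'e dual to a sum of periodic classes, and correspondingly changes $\langle c_1(s_w(\psi)), [F_\lambda]\rangle$ and $n_w(\psi)-n_z(\psi)$. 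The standard computation (as in \cite[Section 4]{OSintsurg} or \cite{OSratsurg}) shows these two changes cancel: if $\mathcal P$ is a periodic class and $\psi' = \psi + \mathcal P$, then $\langle c_1(s_w(\psi')),[F_\lambda]\rangle - \langle c_1(s_w(\psi)),[F_\lambda]\rangle = 2(n_w(\mathcal P) - n_z(\mathcal P))$, using that $[F_\lambda]$ is (a rational multiple of) the generator of $H_2(W_\lambda,Y)$ dual to the meridional class and $(n_w-n_z)(\mathcal P)$ records the intersection of $\mathcal P$ with the meridional sphere. Hence $f(\x)$ is independent of $\psi$.

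Next, independence of $\x'$: given two intersection points $\x'_1, \x'_2 \in \Ta\cap\Tg$ and triangles $\psi_i\in\pi_2(\x,\Theta_{\beta\gamma},\x'_i)$, one splices a disk $\phi\in\pi_2(\x'_1,\x'_2)$ onto $\psi_1$ to get a triangle in $\pi_2(\x,\Theta_{\beta\gamma},\x'_2)$; by the previous paragraph any such triangle computes the same $f$, so we may compare $\psi_2$ with this spliced triangle. The change in $s_w$ is now governed by $\epsilon(\x'_1,\x'_2)\in H_1(Y_\lambda(K))$, and the usual first-Chern-class/basepoint bookkeeping again shows the two terms in $f$ compensate. (Alternatively, one observes that $f(\x)$ is, up to the fixed constant $[F_\lambda].[F_\lambda]$ and the normalization, exactly the $\spinc$-theoretic quantity $\langle c_1(\s_w(\psi)), [F_\lambda]\rangle$ associated to the $\spinc$ structure on $W_\lambda$ restricting correctly to the two ends, and the latter depends only on $\x$ because the two boundary restrictions are pinned down by $\x$ and by $\Theta_{\beta\gamma}$ respectively.)

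Finally, independence of the framing $\lambda$: two framings $\lambda_m$ and $\lambda_{m'}$ differ by a multiple of the meridian, and the corresponding cobordisms $W_{\lambda_m}, W_{\lambda_{m'}}$ both contain a common piece, or are related by attaching an extra handle along a meridian (which introduces a new $S^2\times S^1$-summand-like piece in the relevant surgery triad). Using the surgery exact triangle / the fact that the generator $\x$ and its associated near-diagonal triangle are compatible across the change of framing, one checks directly that $\langle c_1(s_w(\psi)),[F_{\lambda_m}]\rangle + [F_{\lambda_m}]^2$ shifts by exactly the amount needed to offset the change in $\lambda$-dependent normalization, while $n_w(\psi)-n_z(\psi)$ can be kept fixed by choosing compatible small triangles. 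I expect the framing-independence step to be the main obstacle: one must be careful that the surface $S_{\lambda_m}$ whose self-intersection $q(mq-cr)$ enters via $[F_{\lambda_m}]^2 = (mq-cr)/q$ is tracked consistently with the $\spinc$ evaluation, since both the $c_1$-term and the square term depend on $m$, and showing their $m$-dependence cancels requires the explicit relation $\langle c_1(s_w(\psi)), [F_{\lambda_m}]\rangle = \langle c_1(s_w(\psi)),[F_{\lambda_{m'}}]\rangle + (m-m')q \cdot(\text{something})$ of Lemma \ref{selfintlemma}. Everything else is the routine basepoint-and-Chern-class manipulation familiar from \cite{OSintsurg,OSratsurg}.
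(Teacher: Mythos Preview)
Your overall three-step structure matches the paper's, and your treatment of independence of $\psi$ (for fixed $\x'$, $\lambda$) is essentially correct. However, there is a genuine gap in your argument for independence of $\x'$.

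You write that given $\x'_1,\x'_2\in\Ta\cap\Tg$ one ``splices a disk $\phi\in\pi_2(\x'_1,\x'_2)$''. But such a disk exists only when $\x'_1$ and $\x'_2$ represent the \emph{same} \spinc structure on $Y_\lambda(K)$. The whole point of the lemma is that different choices of $\x'$ can lie in different \spinc structures (indeed, this is what makes $f$ useful later). Your alternative argument---that the \spinc structure on $W_\lambda$ is pinned down by its restrictions to the two ends---is not correct either: the restriction to the $Y_\lambda(K)$ end is $s_w(\x')$, which is exactly the thing you are varying, and in any case \spinc structures on $W_\lambda$ extending fixed structures on the boundary form a torsor over $H^2(W_\lambda,\partial W_\lambda)\cong\zee$, so they are not unique. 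The paper handles this by first reducing to generators $\x',\x''$ supported in a winding region (after introducing sufficient winding of $\gamma_g$ about $\beta_g$) and differing only in their component on $\gamma_g$, and then performing an explicit computation with the Chern class formula \eqref{chern} and the dual spider number $\sigma(\psi,\P_S)$ to see that the change in $\langle c_1(s_w(\psi)),[F_\lambda]\rangle$ is exactly $-2$, cancelling the change of $+2$ in $-2(n_w-n_z)$. This winding-region reduction and the dual spider computation are the missing ideas.

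Your treatment of framing-independence is too vague to count as a proof: you correctly anticipate that this step is delicate, but ``one checks directly'' is not sufficient here. The paper again proceeds by an explicit calculation with the formula \eqref{chern}, comparing all four terms ($\hat\chi$, $n_w$, $\#\partial$, and $\sigma$) for the periodic domains $\P_{S_\lambda}$ and $\P_{S_{\lambda-\mu}}$ in adjacent framings, using specific diagrams near the meridian. The cancellation is not formal; it comes down to the boundary-multiplicity term $\#\partial\P_S$ changing by $-q$ while the other contributions balance to give the required shift of $+1$ in the Chern term to offset the $-1$ in $[F_\lambda]^2$.
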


This was essentially proven by Ozsv\'ath and Szab\'o \cite[proof of Lemma 4.6]{OSratsurg}. We reprise and expand their argument.

\begin{proof} Note that by introducing a trivial winding of $\gamma_g$ around $\beta_g$ we can always arrange that a given intersection point $\x\in\Ta\cap\Tb$ is connected by a (small) triangle to some point $\x'\in\Ta\cap\Tg$.

Now fix $\lambda = \lambda_m$. Since $H_2(W_\lambda)/H_2(Y)\cong \zee$ is generated by $[S_\lambda]$, there is a triply-periodic domain $\P_S$ in $(\Sigma, \aalpha,\bbeta,\ggamma,w)$ representing this generator. We can determine the coefficients of $\P_S$ in regions of $\Sigma$ near $\beta_g$ as follows. Observe that if $\beta_g$ is replaced by the curve $\nu$, we obtain a Heegaard diagram  describing the result of Dehn filling $Y-\textrm{nbd}(K)$ along the torsion curve $\nu$. In particular, the first Betti number of this filling is one more than the Betti number of $Y$. Hence, there is a (doubly) periodic domain in the diagram $(\Sigma, \aalpha,\mbox{\boldmath $\nu$}, w)$ containing $\nu$ with multiplicity $c$ in its boundary, corresponding to a rational Seifert surface for $K$. On the other hand, since $\nu = t\lambda_{can} + r\mu$ in homology, by replacing $\nu$ by a concatenation of copies of $\lambda = \gamma_g$ and $\mu= \beta_g$, we can construct a triply-periodic domain in $(\Sigma, \aalpha, \bbeta,\ggamma, w)$ representing $S_\lambda$ and containing $\gamma_g$ with multiplicity $q$ and $\beta_g$ with multiplicity $mq-cr$ in its boundary. See Figure \ref{figure}. 
\begin{figure}
\hspace*{-1cm}\includegraphics{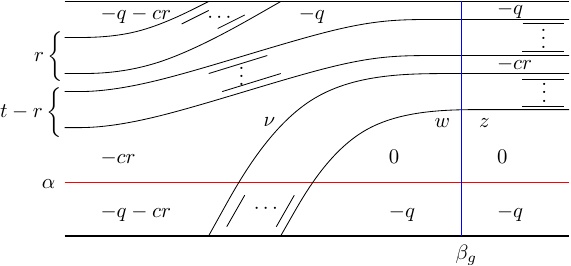}

\vspace*{1cm}
\hspace*{-.5cm}\includegraphics{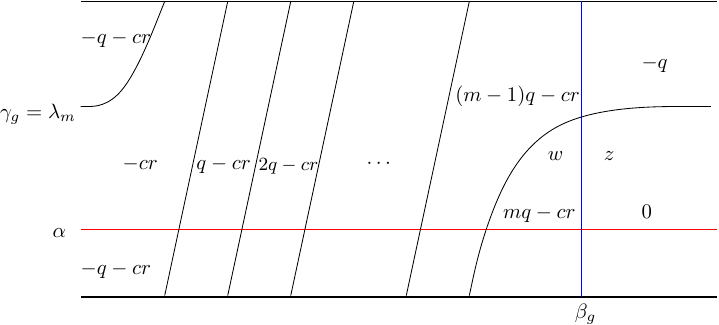}
\caption{\label{figure}Heegaard diagrams near the meridian curve $\beta_g$, where the top and bottom of each diagram are to be identified. Top, the torsion curve $\nu$ is shown together with coefficients of a periodic domain containing $\nu$ $c$ times in its boundary. Away from the pictured region, $\nu$ is taken to be $t$ copies of the longitude $\lambda_{can}$ (not shown). Bottom, the longitude $\lambda_m = \lambda_{can} + m\mu$, with coefficients of the corresponding triply-periodic domain. Away from the picture, the coefficients agree with those in the upper diagram, after collapsing the parallel copies of $\lambda_{can}$ that comprise $\nu$. Note that there may be additional $\alpha$ curves appearing, parallel to the one shown.}
\end{figure}

To see that $f(\x)$ is independent of the choice of triangle $\psi\in \pi_2(\x,\Theta_{\beta\gamma},\x')$ (for $\x'$ fixed), observe that two such triangles differ by a triply-periodic domain (up to a multiple of the Heegaard surface, which clearly does not affect $f$). Thus it suffices to assume $\psi' = \psi + \P_S$. The values of $f$ corresponding to $\psi$ and $\psi'$ then differ by the quantity
\[
\langle 2PD[\P_S],[F_\lambda]\rangle - 2(n_w(\P_S)-n_z(\P_S)).
\]
Using Lemma \ref{selfintlemma} and considering Figure \ref{figure}, we have that both terms above are equal to $2(mq-cr)$.

Now consider the effect on $f(\x)$ of replacing $\x'$ by another intersection point $\x''$ such that $\s_w(\x'') = \s_w(\x')$. Then there is a Whitney disk $\phi\in\pi_2(\x',\x'')$, and given $\psi'\in \pi_2(\x,\Theta_{\beta\gamma},\x')$, we can construct a triangle $\psi''\in\pi_2(\x,\Theta_{\beta\gamma},\x'')$ as $\psi'' = \psi' + \phi$. But this adjustment does not affect $\s_w(\psi')$ and hence the first term in $f(\x)$ is preserved. The second term is not affected by choice of $\psi$, while the third is invariant since $\psi'$ and $\psi''$ have the same boundary in $\Tb$. Therefore, given $\lambda$ the function $f(\x)$ depends at most on the \spinc structure $\s_w(\x')$.

It is now clear that we can adjust $\gamma_g = \lambda$ by an isotopy without affecting $f$. Thus, we introduce sufficient trivial winding of $\gamma_g$ around $\beta_g$ (c.f. \cite[Figure 2]{OS1} or Figure \ref{woundfigure} below) such that the following holds: any \spinc structure represented by an intersection point in $\Ta\cap\Tg$ that is connected by a triangle in $(\Sigma,\aalpha,\bbeta,\ggamma,w)$ to a generator in $\Ta\cap\Tb$, is also represented by an intersection point that is supported in the winding region. (This is possible since any two \spinc structures in $Y_\lambda$ cobordant to a given $\s\in\Spinc(Y)$ differ by a multiple of the Poincar\'e dual of the meridian of $K$.)

Hence, to examine the dependence of $f(\x)$ on the \spinc structure $\s_w(\x')$, it suffices to consider two generators $\x',\x''\in \Ta\cap\Tg$, both supported in the winding region and differing only in their component on $\gamma_g$. Moreover we can suppose these components are as pictured in Figure \ref{woundfigure}. %(Tom's notes Jan 26) 
Finally, we can assume that $\x'$ and $\x''$ are connected to $\x$ by a ``small triangle,'' i.e., for $i\neq g$, the component of $\x'$ (and $\x''$) on $\gamma_i$ is the canonical ``closest point'' to the corresponding component of $\x$ under the Hamiltonian isotopy between $\beta_i$ and $\gamma_i$.

\begin{figure}
\includegraphics{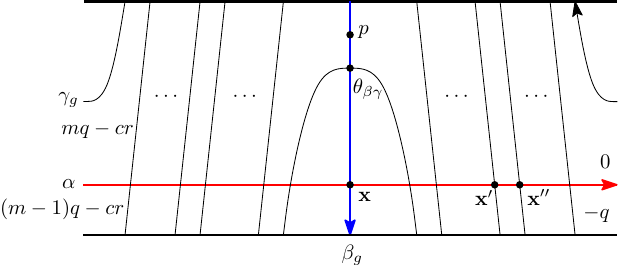}
\caption{\label{woundfigure}The region near $\beta_g$ in Figure \ref{figure} (bottom), after winding. Chosen orientations on $\alpha$, $\beta$, and $\gamma$ curves are indicated, with some coefficients of $\P_S$.}
\end{figure}

Letting $\psi'$ and $\psi''$ be the small triangles corresponding to $\x'$ and $\x''$ respectively, we consider the difference $f(\x,\psi'') - f(\x,\psi')$. Clearly $[F_\lambda].[F_\lambda]$ is unchanged, while 
\[
(n_w(\psi'')-n_z(\psi'')) - (n_w(\psi') - n_z(\psi')) = -1.
\]
Recall that there is a method to calculate the evaluation of $c_1(\s_w(\psi))$ on $[S_\lambda]$ from the Heegaard diagram, summarized by the formula
\begin{equation}\label{chern}
\langle c_1(\s_w(\psi)), [S_\lambda]\rangle = \hat{\chi}(\P_S) - 2n_w(\P_S) + \#\partial\P_S + 2\sigma(\psi,\P_S)
\end{equation}
(see \cite[Proposition 6.3]{OSabsgrad}). We review the definitions of the terms in this formula as we go along, but for the moment observe that the only term on the right hand side that depends on $\psi$ is the ``dual spider number'' $\sigma(\psi,\P_S)$. This quantity is obtained by considering small left-hand pushoffs $\alpha'$, $\beta'$, and $\gamma'$ of each $\alpha$, $\beta$, and $\gamma$ curve, according to a chosen orientation on these curves (the dual spider number is independent of this choice). Take arcs $a$, $b$ and $c$ in the 2-simplex $\Delta$ that is the domain of $\psi: \Delta\to \Sym^g(\Sigma)$, where $a$, $b$, $c$ connect a basepoint $u\in int(\Delta)$ to the $\alpha$, $\beta$, and $\gamma$ boundary segments of $\Delta$, respectively. Identifying these arcs with their image 1-chains in $\Sigma$, we have
\[
\sigma(\psi,\P_S) = n_{\psi(u)}(\P_S) + (\partial_{\alpha'}\P_S).a + (\partial_{\beta'}\P_S).b + (\partial_{\gamma'}\P_S).c
\]
With conventions indicated in Figure \ref{woundfigure}, and taking $u$ to be near the $\beta\gamma$ corner of $\Delta$, we find that in the difference  $\sigma(\psi'',\P_S) - \sigma(\psi',\P_S)$ only the terms involving $(\partial_{\alpha'}\P_S).a$ remain. Each intersection of the arc $a$ with the $\alpha$ curve in the diagram contributes $-q$ to this quantity, and we get one more such contribution from $\psi''$ than from $\psi'$. Hence
\[
\langle c_1(\s_w(\psi'')), [S_\lambda]\rangle - \langle c_1(\s_w(\psi')), [S_\lambda]\rangle = -2q.
\]
Therefore 
\[
\langle c_1(\s_w(\psi'')), [F_\lambda]\rangle - \langle c_1(\s_w(\psi')), [F_\lambda]\rangle = -2,
\]
which cancels the difference in the term $-2(n_w(\psi) - n_z(\psi))$ in $f(\x)$. Thus, $f$ is independent of the choice of $\x'$.

Finally, we must see $f$ is independent of the framing $\lambda$. In light of the preceding it suffices for this to consider the diagrams of Figure \ref{framingchange}, %(Tom's notes Jan 27)
in which a given framing $\lambda$ is replaced by $\lambda - \mu$. We can consider the smallest triangles in each diagram and assume the points of $\x'$ and $\x''$ agree away from the portion of the diagram indicated in the figure. From Lemma \ref{selfintlemma}, the term $[F_\lambda].[F_\lambda]$ decreases by 1 from the left to the right side of the figure. Clearly the small triangles have $n_w - n_z = 0$, so we turn to the Chern class term.

\begin{figure}
\includegraphics[width=4.5in]{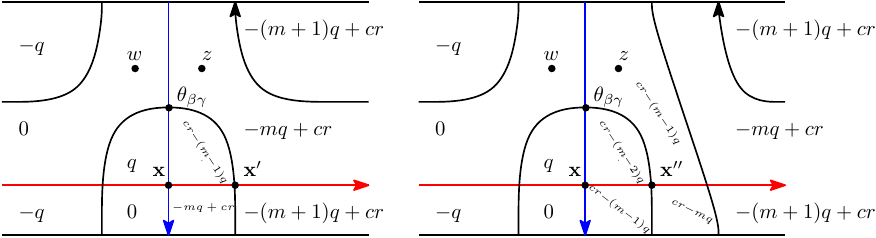}
\caption{\label{framingchange}Invariance of $f(\x)$ under change of framing. Shown is the region of Figure \ref{figure} (bottom) near $\beta_g$ after one winding, before and after a change of framing. The coefficients of the triply periodic domain are shown, after subtracting $mq-cr$ copies of the Heegaard surface for convenience. }
\end{figure}

Referring to \eqref{chern}, the Euler measure $\hat{\chi}(\P_S)$ is defined by 
\[
\hat\chi(\P_S) = \sum_i n_i(\chi(D_i) - \ts\frac{1}{4}\#\mbox{(corners in $D_i$)})
\]
if the domain $\P_S$ is expressed as a linear combination $\sum_i n_i D_i$ of domains (closures of components) of $\Sigma - (\aalpha,\bbeta,\ggamma)$. It is easy to see, using the fact that the domains $\P_{S_\lambda}$ and $\P_{S_{\lambda - \mu}}$ on the two sides of Figure \ref{framingchange} agree in the portion of $\Sigma$ not pictured, that this term is unchanged on replacing $\lambda$ by $\lambda -\mu$. Likewise, we've arranged that $n_w(\P_{S_\lambda}) = n_w(\P_{S_{\lambda - \mu}}) = 0.$

The term $\#\partial \P_S$ in \eqref{chern} denotes the coefficient sum of all terms in $\partial\P_S$, expressed as an integer linear combination of $\alpha$, $\beta$, and $\gamma$ curves. Easily,
\[
\#\partial\P_{S_\lambda} - \#\partial\P_{S_{\lambda-\mu}} = -q,
\]
coming just from the contribution of the term $\beta_g$. For the dual spider number, we find
\[
\sigma(\psi',\P_{S_\lambda}) - \sigma(\psi'', \P_{S_{\lambda - \mu}}) = -q \beta' . b + n_{\psi(x)}(\P_{S_\lambda} - \P_{S_{\lambda - \mu}}) = 0.
\]
Adding these contributions to the Chern class term and dividing by $q$ as before, it follows that $f(\x, \lambda) = f(\x,\lambda -\mu)$.

\end{proof}

\subsection{Surgery exact triangle}
The proof of Theorem \ref{ratsurgformula} is based on a long exact sequence relating the Floer homology groups of manifolds obtained by different surgeries along a knot $K$, and in particular the deduction of part 2 of the theorem is based on an analysis of the maps in this sequence. We recall the construction of this surgery exact sequence, following \cite{OSintsurg} and \cite{OSratsurg}.

As before, consider a rationally null-homologous knot $K\subset Y$, and an adapted Heegaard diagram $(\Sigma, \aalpha, \bbeta, w)$ where $\beta_g = \mu$ is a meridian of $K$.  Fixing a framing $\lambda$ for $K$ and an integer $m>0$, we can produce Heegaard diagrams
\begin{itemize}
\item $(\Sigma, \aalpha, \ggamma, w)$ for the surgery $Y_\lambda(K)$ where $\gamma_j$ is a small translate of $\beta_j$ except that $\gamma_g = \lambda$.
\item $(\Sigma, \aalpha,\ddelta, w)$ for $Y_{\lambda + m\mu}(K)$, where $\delta_j$ is a translate of $\beta_j$ except that $\delta_g = \lambda + m\mu$. 
\end{itemize}

To condense notation, fix a basepoint $p\in \beta_g$ such that $w$ and $z$ are joined by an arc only intersecting $\beta_g$ transversely at $p$, and not intersecting any other curve in the diagram. For a domain $\P$ in a Heegaard diagram as above we write $m_p(\partial \P)$ for the multiplicity with which the portion of $\partial\P$ on $\beta_g$ crosses $p$: equivalently, we set $m_p(\partial\P) = n_w(\P) - n_z(\P)$. Next define a twisted Floer chain complex for $Y$ generated by intersections between the tori $\Ta$ and $\Tb$ in $\Sym^g(\Sigma)$, with coefficients in the ring $\F[C_m]$ where $C_m$ is the cyclic group of order $m$. We think of this as $\F[C_m] = \F[T, T^{-1}]/(1- T^m)$. The boundary in this chain complex is
\[
\partial(U^{-i}\x) = \sum_{\phi\in\pi_2(\x,\y)}\#\M(\phi) T^{m_p(\partial \phi)} U^{-i+n_w(\phi)} \y.
\]
Since $K$ is rationally null-homologous the homology is actually untwisted: there is an $\F[C_m]$ chain isomorphism
\[
\theta: CF^+(Y, \F[C_m]) \to CF^+(Y) \otimes \F[C_m],
\]
where the differential on the codomain is the tensor product of the differential on $CF^+(Y)$ with the identity on $\F[C_m]$, given by
\[
\theta(\x ) = \x \otimes T^{m_p(\phi_{\x \x_0})}.
\]
Here $\x_0$ denotes a chosen intersection point (in each \spinc equivalence class) and $\phi_{\x\x_0}$ is a fixed choice, for each $\x\in\Ta\cap\Tb$, of a disk connecting $\x$ to $\x_0$ (c.f. \cite[proof of Theorem 3.1]{OSintsurg}). Note that we are free also to multiply $\theta$ by a fixed power of $T$ if we choose.

Now, in this situation there is an exact triangle
\begin{diagram}
HF^+(Y_\lambda) && \rTo^{f_1} && HF^+(Y_{\lambda + m\mu}) \\ 
&\luTo^{f_3} && \ldTo^{f_2} \\
&& \ul{HF}^+(Y, \F[C_m]) &&
\end{diagram}
%(notation matches OS conventions in integer- and rational-surgery papers). 
The maps in this triangle are induced by chain maps defined as follows: first note that $(\Sigma, \ggamma, \ddelta)$ describes the connected sum of $L(m,1)$ with $\#^{g-1} S^1\times S^2$. We fix a ``canonical'' intersection point $\Theta_{\gamma\delta}\in \Tg \cap\Td$ (adjacent to the basepoint). Then
\begin{equation}\label{f1def}
f_1(\x) = \sum_{\psi\in\pi_2(\x,\Theta_{\gamma\delta},\y)} \#\M(\psi) U^{n_w(\psi)}\y.
\end{equation}
For the maps involving the twisted Floer group, we have
\begin{equation}\label{f2def}
f_2(\y) = \sum_{\psi\in \pi_2(\y,\Theta_{\delta\beta}, \V)}\#\M(\psi) T^{m_p(\partial\psi)}U^{n_w(\psi)}\V,
\end{equation}
and
\begin{equation}\label{f3def}
f_3(T^s\V) = \sum_{\stackrel{\psi\in\pi_2(\V,\Theta_{\beta\gamma},\x)}{ m_p(\partial\psi) = -s \mod m}} \#\M(\psi)U^{n_w(\psi)}\x
\end{equation}
where $\Theta_{\delta\beta}$ and $\Theta_{\beta\gamma}$ are the usual canonical intersection points.

Ozsv\'ath and Szab\'o's proof of the exactness of the triangle \cite[Theorem 3.1]{OSintsurg} implies that $CF^+(Y_\lambda)$ is quasi-isomorphic to the mapping cone of $f_2$. In particular the map 
\[
CF^+(Y_{\lambda+m\mu})\oplus \ul{CF}^+(Y, \F[C_m])\to CF^+(Y_\lambda)
\]
given by $(a,b) \mapsto f_3(b) + h_2(a)$ defines a quasi-isomorphism $Cone(f_2)\to CF^+(Y_\lambda)$, where $h_2$ is a null-homotopy of $f_3\circ f_2$. It follows that the composition of this quasi-isomorphism with the map in homology induced by the natural inclusion $\ul{CF}^+(Y, \F[C_m]) \to Cone(f_2)$ is just the map $f_3$ in the surgery triangle.

On the other hand, the map $f_3$ (and all the maps in the triangle) can be identified with homomorphisms induced by cobordisms, as we now explain. Consider the natural 2-handle cobordism $W_\lambda: Y\to Y_\lambda$, equipped with a \spinc structure $\s$. The corresponding homomorphism on Floer complexes $CF^+(Y)\to CF^+(Y_\lambda)$ is defined by a count of holomorphic triangles in $(\Sigma, \aalpha,\bbeta,\ggamma,w)$ analogously to \eqref{f3def}, without reference to $T$ or $s$, and where the sum is over homotopy classes of triangle whose associated \spinc structure is exactly $\s$. 

\begin{lemma}\label{YtoYlambdalemma}
For a given integer $s$, let $\eta_s$ denote the restriction of $\theta^{-1}$ to the summand $CF^+(Y)\otimes T^s\subset CF^+(Y)\otimes\F[C_m]$. Then the composition 
\[
f_3\circ \eta_s: CF^+(Y) \cong CF^+(Y)\otimes T^s \subset CF^+(Y)\otimes \F[C_m]\to CF^+(Y_\lambda)
\]
is equal to the sum of the homomorphisms induced by those \spinc structures on $W_\lambda$ represented by homotopy classes of triangle $\psi$ having \[m_p(\partial\psi) - m_p(\partial \phi_{\x\x_0}) + s = 0 \mod m.\] In particular, the set of such triangles constitutes a union of \spinc equivalence classes.
\end{lemma}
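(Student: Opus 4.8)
The plan is to unwind the definitions of $\eta_s$ and of $f_3$ (formula \eqref{f3def}) into a triangle count subject to a congruence constraint, and then to invoke Lemma \ref{flemma} to show that the validity of that constraint is a property of the induced \spinc structure on $W_\lambda$ alone, so that the count reassembles into a sum of cobordism maps.

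First I would unwind. Since $\theta$ is $\F[C_m]$-linear with $\theta(\x) = \x \otimes T^{m_p(\partial\phi_{\x\x_0})}$ on generators, we get $\theta^{-1}(\x\otimes T^s) = T^{\,s - m_p(\partial\phi_{\x\x_0})}\x$, hence $\eta_s(\x) = T^{\,s - m_p(\partial\phi_{\x\x_0})}\x \in CF^+(Y,\F[C_m])$. (Here $m_p(\partial\phi_{\x\x_0})$ does not depend on the representative disk: two disks in $\pi_2(\x,\x_0)$ differ by a periodic domain of $(\Sigma,\aalpha,\bbeta)$, and for such a domain $n_w - n_z$ equals the $\beta_g$-coefficient of its boundary, which is the algebraic intersection of the associated closed surface in $Y$ with $K$; this vanishes since $q[K]=0$.) Substituting into \eqref{f3def} gives
\[
f_3(\eta_s(\x)) = \sum_{\substack{\psi\in\pi_2(\x,\Theta_{\beta\gamma},\y)\\ m_p(\partial\psi) - m_p(\partial\phi_{\x\x_0}) + s \equiv 0 \bmod m}} \#\M(\psi)\, U^{n_w(\psi)}\,\y.
\]
As $F_{W_\lambda,\s_0}$ is defined by the analogous count over $\psi$ with $s_w(\psi) = \s_0$, the lemma follows once we show that the truth of $m_p(\partial\psi) - m_p(\partial\phi_{\x\x_0}) + s \equiv 0 \bmod m$ depends on $\psi$ only through $s_w(\psi)$: grouping the sum by \spinc structure then yields $f_3\circ\eta_s = \sum_{\s_0} F_{W_\lambda,\s_0}$ over exactly those $\s_0$ represented by triangles satisfying the congruence.

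For the \spinc-invariance, Lemma \ref{flemma} gives, for any $\psi\in\pi_2(\x,\Theta_{\beta\gamma},\y)$,
\[
2\big(m_p(\partial\psi) - m_p(\partial\phi_{\x\x_0})\big) = \langle c_1(s_w(\psi)),[F_\lambda]\rangle + [F_\lambda].[F_\lambda] - g(\x), \qquad g(\x) := f(\x) + 2\,m_p(\partial\phi_{\x\x_0}),
\]
so it suffices to show that $g(\x)$ depends only on $s_w(\x) = s_w(\psi)|_Y \in \spinc(Y)$. I would prove this by splicing: given $\x,\x'$ with $s_w(\x)=s_w(\x')$, choose $\phi\in\pi_2(\x,\x')$. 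On the one hand $m_p(\partial\phi_{\x\x_0}) - m_p(\partial\phi_{\x'\x_0}) = m_p(\partial\phi)$, since $\phi_{\x\x_0}$ and $\phi\ast\phi_{\x'\x_0}$ differ by a periodic domain with $n_w - n_z = 0$. On the other hand, splicing $\phi$ onto the incoming ($\Ta\cap\Tb$) corner of a triangle $\psi_\x\in\pi_2(\x,\Theta_{\beta\gamma},\y)$ produces $\psi_{\x'}\in\pi_2(\x',\Theta_{\beta\gamma},\y)$ with $m_p(\partial\psi_{\x'}) = m_p(\partial\psi_\x) - m_p(\partial\phi)$ and, crucially, $s_w(\psi_{\x'}) = s_w(\psi_\x)$: the change in the induced \spinc structure on $W_\lambda$ is the Poincar\'e dual of the image in $W_\lambda$ of the difference cycle $\epsilon_w(\x,\x')$, which lies on the Heegaard surface $\Sigma\subset Y=\partial W_\lambda$ and is nullhomologous in $Y$ precisely because $s_w(\x)=s_w(\x')$, hence nullhomologous in $W_\lambda$. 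Substituting these into the defining relation $2m_p(\partial\psi_{\x'}) = \langle c_1(s_w(\psi_{\x'})),[F_\lambda]\rangle + [F_\lambda].[F_\lambda] - f(\x')$ and subtracting the corresponding identity for $\psi_\x$ yields $f(\x) - f(\x') = -2\,m_p(\partial\phi)$, whence $g(\x)=g(\x')$. Therefore $m_p(\partial\psi) - m_p(\partial\phi_{\x\x_0})$ is an integer determined by $s_w(\psi)$ through the displayed formula, so its residue modulo $m$ is too; this is exactly the assertion that the set of congruence-satisfying triangles is a union of \spinc equivalence classes.

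The step I expect to be the main obstacle is the claim that splicing a nullhomologous difference cycle onto the incoming corner leaves $s_w$ of the triangle unchanged. The subtlety is that the relevant evaluation is against the relative class $[F_\lambda]\in H_2(W_\lambda,Y)$, so a priori a change in $s_w$ concentrated near the boundary could still be detected; the resolution is that the difference cycle is supported in $Y$ and dies already in $H_1(Y)$, so its pushforward into $W_\lambda$ — and hence the change in \spinc structure — is trivial. This is the same geometric input about difference cycles and periodic domains near the meridian $\beta_g$ that underlies Lemma \ref{flemma}, and once it is set up with the winding conventions of that proof the remaining bookkeeping is routine.
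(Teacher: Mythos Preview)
Your argument is correct, but it takes a longer road than the paper's. After the identical unwinding of $\eta_s$ and $f_3$, the paper proves the last statement in two lines: if $\psi$ and $\psi'$ are \spinc-equivalent then $\psi' = \psi + \phi_{\alpha\beta} + \phi_{\alpha\gamma}$ for Whitney disks; since $p\in\beta_g$ one has $m_p(\partial\phi_{\alpha\gamma})=0$, and since $\phi_{\x'\x_0}-\phi_{\alpha\beta}$ differs from $\phi_{\x\x_0}$ by a periodic domain (with $m_p=0$), the congruence transfers directly. By contrast you invoke Lemma~\ref{flemma} to rewrite $2(m_p(\partial\psi)-m_p(\partial\phi_{\x\x_0}))$ as a Chern-number expression minus $g(\x)$, and then argue $g$ is constant on \spinc classes. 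This works and in fact yields the stronger conclusion that $m_p(\partial\psi)-m_p(\partial\phi_{\x\x_0})$ is determined exactly (not just mod $m$) by $s_w(\psi)$, but it is heavier machinery than needed. One remark: the step you flag as the ``main obstacle''---that splicing the disk $\phi$ preserves $s_w$ of the triangle---is not an obstacle at all; it is essentially the definition of \spinc equivalence of triangles (two triangles differing by Whitney disks at the corners induce the same \spinc structure), and holds with no hypothesis on $\epsilon_w(\x,\x')$. The assumption $s_w(\x)=s_w(\x')$ enters only to guarantee that a disk $\phi\in\pi_2(\x,\x')$ exists.
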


\begin{proof} 
From the definition of $\theta$, it follows that $\eta_s : CF^+(Y)\to CF^+(Y, \F[C_m])$ is given by
\[
\eta_s(\x) = T^{s - m_p(\partial\phi_{\x\x_0})}\x.
\]
Hence the composition is given explicitly by
\[
f_3\circ \eta_s(\x) = \sum_{\stackrel{\psi\in\pi_2(\x,\Theta_{\beta\gamma},\y)}{m_p(\partial\psi) = m_p(\partial\phi_{\x\x_0}) - s\mod m}} \#\M(\psi)U^{n_w(\psi)}\x.
\]
Thus we need only verify the last statement.

For this recall that triangles $\psi\in\pi_2(\x,\Theta_{\beta\gamma}, \y)$ and $\psi'\in\pi_2(\x',\Theta_{\beta\gamma},\y')$ induce the same \spinc structure on $W_\lambda$ (they are ``\spinc equivalent'') if and only if $\psi' = \psi + \phi_{\alpha\beta} + \phi_{\alpha\gamma}$ for some disks $\phi_{\alpha\beta}\in\pi_2(\x',\x),\, \phi_{\alpha\gamma}\in\pi_2(\y,\y')$ in the indicated diagrams. (In principle a disk $\phi_{\beta\gamma}$ could also appear, but here $\psi$ and $\psi'$ are assumed to have corners on $\Theta_{\beta\gamma}$. Hence such a disk is a $(\beta,\gamma)$-periodic domain, up to multiples of the Heegaard surface. Both of these have no boundary on $\beta_g$ hence their boundary has $m_p = 0$.) Since the basepoint $p$ is on $\beta_g$, we have $m_p(\partial \phi_{\alpha\gamma}) = 0$. Additivity shows that if $m_p(\partial\psi') -m_p(\partial\phi_{\x'\x_0}) +s = 0$ mod $m$ then modulo $m$,
\begin{eqnarray*}
0 &=& m_p(\partial \psi) + m_p(\partial \phi_{\alpha\beta}) -m_p(\partial\phi_{\x'\x_0}) + s\\ &=& m_p(\partial\psi) - m_p(\partial(\phi_{\x'\x_0} - \phi_{\alpha\beta})) + s
\end{eqnarray*}
But $\phi_{\x'\x_0} - \phi_{\alpha\beta}$ is a disk connecting $\x$ to $\x_0$ and while it need not be the same as $\phi_{\x\x_0}$, we observe that since the knot is rationally null-homologous the value of $m_p$ vanishes for periodic domains in $(\Sigma,\aalpha,\bbeta,w)$. Hence we can replace $\phi_{\x'\x_0} - \phi_{\alpha\beta}$ by $\phi_{\x\x_0}$ in the above and the conclusion follows.

\end{proof}

In a similar vein, we have:

\begin{lemma}\label{YmtoYlemma} Let $\theta_s : \ul{CF}^+(Y, \F[C_m])\to CF^+(Y)$ denote the composition of $\theta$ with the projection to the coefficient of $T^s$. Then the composition
\[
\theta_s\circ f_2: CF^+(Y_{\lambda+m\mu})\to CF^+(Y)
\]
is equal to a sum of maps induced by the 2-handle cobordism $Y_{\lambda+m\mu}\to Y$ equipped with the \spinc structures represented by triangles $\psi$ in the diagram $(\Sigma, \aalpha,\ddelta,\bbeta,w)$ such that \[m_p(\partial\psi) + m_p(\partial\phi_{\x\x_0}) = s \mod m.\]
\end{lemma}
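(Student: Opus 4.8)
The plan is to follow the proof of Lemma~\ref{YtoYlambdalemma} essentially verbatim. First I would expand the composition straight from the definitions. By \eqref{f2def}, for $\y\in\Ta\cap\Td$ we have $f_2(\y)=\sum_{\psi\in\pi_2(\y,\Theta_{\delta\beta},\V)}\#\M(\psi)\,T^{m_p(\partial\psi)}U^{n_w(\psi)}\,\V$ in $\ul{CF}^+(Y,\F[C_m])$, and since $\theta(\V)=\V\otimes T^{m_p(\partial\phi_{\V\x_0})}$, applying $\theta$ multiplies the contribution of each $\psi$ by $T^{m_p(\partial\phi_{\V\x_0})}$. Projecting onto the coefficient of $T^s$ and identifying $\V\otimes T^s$ with $\V$ then gives
\[
\theta_s\circ f_2(\y)=\sum_{\substack{\psi\in\pi_2(\y,\Theta_{\delta\beta},\V)\\ m_p(\partial\psi)+m_p(\partial\phi_{\V\x_0})\equiv s\ (\mathrm{mod}\ m)}}\#\M(\psi)\,U^{n_w(\psi)}\,\V
\]
(here $\V$ plays the role of the generator ``$\x$'' in the statement of the lemma). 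On the other hand, the homomorphism on $CF^+$ induced by the $2$-handle cobordism $Y_{\lambda+m\mu}\to Y$ equipped with a fixed \spinc structure $\s$ is, by definition, the same triangle count in $(\Sigma,\aalpha,\ddelta,\bbeta,w)$, but restricted to those $\psi$ with $s_w(\psi)=\s$. So it suffices to prove that the set of triangles with $m_p(\partial\psi)+m_p(\partial\phi_{\V\x_0})\equiv s\pmod m$ is a union of \spinc equivalence classes.

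To see this, I would show that the integer $m_p(\partial\psi)+m_p(\partial\phi_{\V\x_0})$ depends only on the \spinc equivalence class of $\psi$. Two triangles $\psi\in\pi_2(\y,\Theta_{\delta\beta},\V)$ and $\psi'\in\pi_2(\y',\Theta_{\delta\beta},\V')$ with the same corner on $\Theta_{\delta\beta}$ are \spinc equivalent precisely when $\psi'=\psi+\phi_{\alpha\delta}+\phi_{\alpha\beta}$ for disks $\phi_{\alpha\delta}\in\pi_2(\y',\y)$ and $\phi_{\alpha\beta}\in\pi_2(\V,\V')$. Because $p$ lies on $\beta_g$ we have $m_p(\partial\phi_{\alpha\delta})=0$, so $m_p(\partial\psi')=m_p(\partial\psi)+m_p(\partial\phi_{\alpha\beta})$; adding $m_p(\partial\phi_{\V'\x_0})$ to both sides and using additivity of $m_p$, the right-hand side becomes $m_p(\partial\psi)+m_p(\partial(\phi_{\alpha\beta}+\phi_{\V'\x_0}))$, where $\phi_{\alpha\beta}+\phi_{\V'\x_0}$ is a disk from $\V$ to $\x_0$. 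Since $K$ is rationally nullhomologous, $m_p$ vanishes on periodic domains in $(\Sigma,\aalpha,\bbeta,w)$, so this disk may be replaced by $\phi_{\V\x_0}$ without changing its $m_p$, giving $m_p(\partial\psi')+m_p(\partial\phi_{\V'\x_0})=m_p(\partial\psi)+m_p(\partial\phi_{\V\x_0})$, as desired.

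The only point I expect to need care, exactly as in the proof of Lemma~\ref{YtoYlambdalemma}, is the \spinc equivalence criterion invoked above: a priori $\psi'$ could differ from $\psi+\phi_{\alpha\delta}+\phi_{\alpha\beta}$ by a domain with boundary on $\ddelta$ and $\bbeta$ alone. But since both triangles have corner $\Theta_{\delta\beta}$, such a domain is---up to multiples of $\Sigma$, which have empty boundary---a genuine $(\delta,\beta)$-periodic domain, and these have no boundary on $\beta_g$ (the last $\delta$- and $\beta$-curves being the framing curve $\lambda+m\mu$ and the meridian $\beta_g=\mu$, whose associated diagram contributes only a lens space with trivial second homology). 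Hence their $m_p$ vanishes and they do not affect the count. With this observation the two descriptions of $\theta_s\circ f_2$ agree term by term, proving the lemma.
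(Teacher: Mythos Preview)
Your proof is correct and follows essentially the same approach as the paper's: both unwind the definition of $\theta_s\circ f_2$ and then verify that the assignment $\psi\mapsto m_p(\partial\psi)+m_p(\partial\phi_{\V\x_0})$ descends to \spinc equivalence classes, using that $m_p$ vanishes on $(\alpha,\delta)$-disks (no boundary on $\beta_g$) and on $(\alpha,\beta)$-periodic domains (since $K$ is rationally nullhomologous). The paper's version is terser, simply noting the key difference from Lemma~\ref{YtoYlambdalemma}---that here $\phi_{\alpha\beta}\in\pi_2(\V,\V')$ rather than $\pi_2(\V',\V)$---and invoking ``analogous reasoning'' for the rest; your explicit treatment of the $(\delta,\beta)$-periodic domain issue is a reasonable addition.
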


\begin{proof} Again, the point is that the assignment $\psi \mapsto m_p(\partial\psi) + m_p(\partial\phi_{\x\x_0}) \in \zee/m\zee$ descends to \spinc equivalence classes. Writing $\psi' = \psi + \phi_{\alpha\delta} + \phi_{\alpha\beta}$ as before, note that this time $\phi_{\alpha\beta} \in \pi_2(\x,\x')$ rather than $\pi_2(\x',\x)$. Hence 
\begin{eqnarray*}
m_p(\partial\psi') + m_p(\partial \phi_{\x'\x_0}) &=& m_p(\partial\psi) + m_p(\partial\phi_{\alpha\beta}) + m_p(\partial\phi_{\x'\x_0})\\
&=& m_p(\partial\psi) + m_p(\partial \phi_{\x\x_0})
\end{eqnarray*}
by analogous reasoning.
\end{proof}
Explicitly, the last lemma says that
\[
\theta\circ f_2 = \sum_{\s\in\Spinc(W_{\lambda+m\mu})} F_{W_{\lambda+m\mu},s} \cdot T^{\ul{m}(\s)}
\]
where $\ul{m}: \Spinc(W_{\lambda+m\mu})\to \zee/m\zee$ is induced by $\psi\mapsto m_p(\partial\psi) + m_p(\partial\phi_{\x\x_0})$ as above.

%For large positive $m$, fix a \spinc structure $\s\in \spinc(Y_{\lambda+m\mu})$, characterized by the property that $\s$ is cobordant to some fixed \spinc structure on $Y$ by a \spinc structure $\tt\in\spinc(W_{\lambda+m\mu})$ with
%\[
%\langle c_1(\tt), [S_m]\rangle -[S_m].[S_m] = 2j \quad \mbox{some $j\in\zee$}.
%\]
%Here we use the notation $[S_m]$ as shorthand for $[S_{\lambda+m\mu}]\in H_2(W_{\lambda+m\mu})$, and similarly  $[F_m]$ for $[F_{\lambda+m\mu}]$)

We now use the results from the previous subsection to relate the \spinc structures on the various surgery cobordisms relevant to the surgery triangle to one another. According to Lemma \ref{YtoYlambdalemma}, for given $s$ the composition $f_3\circ \eta_s: CF^+(Y)\to CF^+(Y_\lambda)$ is given by the sum of homomorphisms induced by \spinc structures on $W_\lambda$ represented by triangles $\psi$ such that $m_p(\partial\psi) - m_p(\partial \phi_{\x\x_0}) + s =0$ mod $m$. For such $\psi$ we have 
\begin{eqnarray}
\langle c_1(\s_w(\psi)), [F_\lambda]\rangle + [F_\lambda].[F_\lambda] &=& f(\x) + 2m_p(\partial\psi)\nonumber\\
&=& f(\x) + 2(m_p(\partial\phi_{\x\x_0})-s) \mod m.\label{labeleqn}
\end{eqnarray}
A similar statement is true for the triangles counted in the composition $\theta_s\circ f_2$, except that in this case we consider the cobordism $Y_{\lambda+m\mu}\to Y$ given by $-W_{\lambda+m\mu}$, and triangles are in the diagram $(\Sigma, \aalpha, \ddelta,\bbeta,w)$. If $\bar\psi_m$ is such a triangle, the triangle $\psi_m = - \bar\psi_m$ obtained by negating the coefficients of $\bar\psi_m$ is a triangle we can use to compute $f(\x)$, and in particular we have
\begin{eqnarray*}
f(\x)&=& \langle c_1(\s_w(\psi_m)), [ F_{\lambda_m}]\rangle + [ F_{\lambda_m}].[ F_{\lambda_m}] - 2m_p(\partial \psi_m)\\
&=& \langle c_1(\s_w(\psi_m)), [ F_{\lambda_m}]\rangle + [ F_{\lambda_m}].[ F_{\lambda_m}] + 2m_p(\partial \bar\psi_m)
\end{eqnarray*}
According to Lemma \ref{YmtoYlemma}, $\theta_s\circ f_2$ is given by the sum of maps counting triangles $\bar\psi_m$ satisfying $m_p(\partial\bar\psi_m) + m_p(\partial\phi_{\x\x_0}) = s$ mod $m$. Hence \eqref{labeleqn} becomes, modulo $m$,
\[
\langle c_1(\s_w(\psi)), [F_\lambda]\rangle + [F_\lambda].[F_\lambda] = \langle c_1(\s_w(\psi_m)), [ F_{\lambda_m}]\rangle + [ F_{\lambda_m}].[ F_{\lambda_m}].
\]
 One way to view this result is to note that for any framing on $K$, the \spinc structures on the corresponding surgery cobordism $W$, which extend a fixed \spinc structure on $Y$, can be labeled uniquely by the rational numbers $\langle c_1(\s),[F]\rangle + [F].[F]$ where $[F]$ is the generator of $H_2(W,Y)$ as usual. We have seen that the inclusion of $CF^+(Y)$ as the coefficient of $T^s$ in the mapping cone of $f_2$ corresponds to a map induced by \spinc structures on $W_\lambda$. The above says that the coefficient of $T^s$ is also the target of maps induced by \spinc structures on $W_{\lambda+m\mu}$ equipped with \spinc structures having the {\em same labels} (modulo $m$) as the ones on $W_\lambda$ (but thought of as on $-W_{\lambda+m\mu}$).

\subsection{Rational Surgeries}\label{ratsurgsec}

We now consider the situation of a knot $K_0\subset S^3$, and the formula deduced by Ozsv\'ath and Szab\'o for the Heegaard Floer homology of a 3-manifold obtained by rational surgery on $K_0$. We write $p/q$ for the surgery coefficient, where henceforth $p/q$ is in lowest terms with $q>0$. In \cite{OSratsurg} it is pointed out that the surgered manifold $S^3_{p/q}(K_0)$ can be obtained by an integral (``Morse'') surgery on a rationally null-homologous knot $K$ in a lens space. Here we adopt slightly different conventions from those in \cite{OSratsurg}: write $p = mq-r$ for $0\leq r< q$, and consider the knot $O_{q/r}$ described as the meridian of the surgery curve in $S^3_{q/r}(U)$ (here $U$ is the unknot in $S^3$). Sticking with our conventions, $S^3_{q/r}(U) = - L(q,r)$. If we let $K = K_0\# O_{q/r}$, then an integral surgery on $K$ gives rise to $S^3_{p/q}(K_0)$, in particular the relevant surgery has framing $m$ in the obvious surgery diagram. Note that in this situation $K$ is rationally null-homologous of order $q$ and admits a rational Seifert surface with connected boundary, and therefore (in the notation of Section \ref{cobordismssec}) has $c = 1$ and $t=q$. Moreover, it can be seen that our use of the symbols $m$ and $r$ here is consistent with that previously, in the sense that the framing on $K$ that yields $S^3_{p/q}(K_0)$ is $\lambda_{can} + m\mu$.

%
%Here we adopt the opposite convention to that in \cite{OSratsurg}, so that for us $L(q,r)$ is obtained by $-q/r$ Dehn surgery on the unknot $U\subset S^3$. The meridian of $U$ then becomes a knot denoted $O_{q/r}$ in $L(q,r)$. If we let $K = K_0\# O_{q/r}$, then an integral surgery on $K$ gives rise to $S^3_{p/q}(K)$. Namely, if $p = mq + r$, the relevant surgery has framing $m$ in the obvious surgery diagram. 

Now, for sufficiently large integral framing $\lambda$ on a rationally nullhomologous knot $K$, the groups $\hfhat(Y_{\lambda}(K),\tt)$ become standard, i.e., independent of $\lambda$ in an appropriate sense (c.f. \cite[Section 4]{OSratsurg}). In particular this holds for the knot $K_0\subset S^3$, where it is known \cite{OSknot} that for sufficiently large framings $N$ there is an isomorphism
\begin{equation}\label{largesurgiso}
\Psi: \cfhat(S^3_N(K_0), s) \to A_s(K_0).
\end{equation}
Here $A_s(K_0)$ is a certain subquotient complex of the knot Floer chain complex described as $A_s = C_*\{\max(i, j-s) = 0\}$ (c.f. \cite[Theorem 4.4]{OSknot}). It would be appropriate to write $\ahat_s$ for this complex, but since we will have no need for other variants (e.g., $A^+_s$), we omit the extra notation. On the other side, $\cfhat(S^3_N(K_0), s)$ indicates the Floer chain complex in a \spinc structure---indicated by $s\in\zee$---characterized by the property that it is the restriction of a \spinc structure $\s_s$ on the corresponding surgery cobordism satisfying $\langle c_1(\s_s), [S_N]\rangle + N = 2s$. Also relevant for us, the isomorphism \eqref{largesurgiso} is realized by a count of holomorphic triangles in a Heegaard triple-diagram $(\Sigma,\aalpha,\ggamma,\bbeta, w,z)$ describing the surgery cobordism $-W_N$ connecting $S^3_N(K)$ to $S^3$. The relevant set of triangles comprises those inducing the given \spinc structure $s$ on the surgery, and could in principle induce many \spinc structures on $-W_N$ all differing by multiples of $N[\widehat{\Sigma}]$, but for sufficiently large $N$ only one of these can contribute to the stated isomorphism. 

Turning to the rationally null-homologous knot $K = K_0\# O_{q/r}$, recall that combining the K\"unneth principle for knot Floer homology with the large-surgery result just mentioned, and observing that the knot Floer theory for $O_{q/r}$ is essentially trivial, we find that for sufficiently large framings $\lambda$ on $K$ there is an isomorphism $\cfhat(Y_\lambda(K), \tt)\cong A_s(K_0)$ for some integer $s$ depending on the \spinc structure $\tt\in \Spinc(Y_\lambda(K))$ (see \cite{OSratsurg}, Corollary 5.3 and the proof of Theorem 1.1). Our aim is to determine the relationship between $\tt$, as specified in terms of the surgery cobordism $-W_\lambda: Y_\lambda(K)\to -L(q,r)$, the integer $s$, and the mapping cone formula for rational surgery deduced in \cite{OSratsurg} as a consequence of the surgery triangle described above.

We begin by comparing the Heegaard triples for integer surgery on $K_0\subset S^3$ and for integer surgery on $K\subset -L(q,r)$. To avoid confusion let us now write $W_\lambda$ for the surgery cobordism $S^3\to S^3_\lambda(K_0)$, and decorate the ``rational version'' with tildes, as $\tW_{\tlambda}: -L(q,r)\to Y_\tlambda(K)$ with $\tlambda$ a framing on $K$. Starting from the Heegaard diagram for $O_{q/r}$ described in \cite[Proof of Lemma 7.1]{OSratsurg}, we can form a triple diagram describing $-\tW_{\tlambda}$ by connected sum with a corresponding diagram for $W_\lambda$: c.f.\ Figure \ref{ratsurgfig}. %(Tom's notes, Feb. 4). 

\begin{figure}
\includegraphics{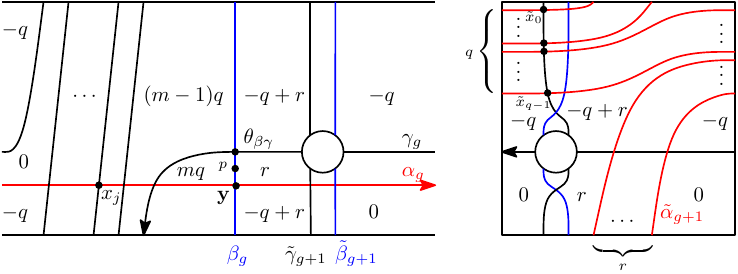}
\caption{\label{ratsurgfig}A diagram for integer surgery on $K = K_0\#O_{q/r}$. The lens space summand appears on the right, where $\tilde\alpha_{g+1}$ is shown as a $(q,r)$ curve intersecting $\tilde\beta_{g+1}$ in points $\tilde x_{0},\ldots \tilde x_{q-1}$. Coefficients for $\P_{\widetilde S}$ are shown; note that $\tilde{\gamma}_{g+1}$ does not appear in $\partial \P_{\widetilde{S}}$.}
\end{figure}

Thus $(\Sigma,\aalpha,\ggamma,\bbeta,w,z)$ describes the cobordism $W_\lambda$, while $(\tSigma, \taalpha,\tggamma,\tbbeta,w,z)$ corresponds to the diagram after connected sum with $-L(q,r)$ and represents $\tW_\tlambda$. (Here $\tSigma$ denotes the connected sum of $\Sigma$ with a torus.) We let $\beta_g$, $\gamma_g$ be the curves appearing in the triple for $W_\lambda$, depicted in Figure \ref{ratsurgfig}, while $\tilde{\beta}_{g+1}$, $\tilde{\gamma}_{g+1}$ are the indicated ``extra'' curves in the diagram for $\tW_\tlambda$. Fix an intersection point $\x\in\Ta\cap \Tg$, and suppose $x_j\in \gamma_g$ is the component appearing in the figure. There are $q$ distinct lifts of $\x$ to a generator in $\mathbb{T}_{\tilde\alpha}\cap \mathbb{T}_{\tilde\beta}$, with components $\tilde x_{i}$ $(0\leq i\leq q-1)$ on $\tilde{\beta}_{g+1}$. Correspondingly, for a ``small triangle'' $\psi_j$ with corner on $\x_j$ in $(\Sigma,\aalpha,\ggamma,\bbeta)$, there are $q$ small triangles $\psi_{j,i}$ in $(\tSigma,\taalpha,\tggamma,\tbbeta)$. In each diagram we can find a triply-periodic domain, $\P_S$ and $\P_{\widetilde{S}}$, representing the generator in second homology of $W_\lambda$ and $\tW_{\tlambda}$ respectively.

\begin{lemma} We have the following relation between the Chern numbers of the triangles $\psi_j$ and $\psi_{j,i}$:
\[
\langle c_1(\s_w(\psi_{j,i})),[\P_{\widetilde{S}}]\rangle - q\langle c_1(\s_w(\psi_j)),[\P_S]\rangle = q+r-1- 2i.
\]
\end{lemma}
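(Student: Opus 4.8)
The plan is to compute both Chern numbers explicitly from the Heegaard triple diagrams using the formula \eqref{chern} for the evaluation of $c_1$ of a triangle's $\operatorname{spin}^c$ structure on a triply-periodic domain, and to track exactly how the quantities change when we pass from the $S^3$ picture $(\Sigma,\aalpha,\ggamma,\bbeta)$ to its connected sum with the lens-space diagram on the right of Figure \ref{ratsurgfig}. The right-hand factor is where all the $i$-dependence lives: the curve $\tilde\alpha_{g+1}$ is a $(q,r)$-curve meeting $\tilde\beta_{g+1}$ in the $q$ points $\tilde x_0,\dots,\tilde x_{q-1}$, and the $q$ lifts $\psi_{j,i}$ of a small triangle $\psi_j$ differ from one another only in how their domain is supported near this torus summand. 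So the first step is to pin down the periodic domain $\P_{\widetilde S}$ near $\tilde\beta_{g+1}$: as indicated in the figure, $\tilde\gamma_{g+1}$ does not appear in $\partial\P_{\widetilde S}$, while $\tilde\beta_{g+1}$ and $\tilde\alpha_{g+1}$ appear with the multiplicities forced by the homology relation for the lens space summand (essentially $q$ and $r$, paralleling the discussion after \eqref{Mdef} and in Lemma \ref{selfintlemma}), and in the $\Sigma$-part $\P_{\widetilde S}$ agrees with $q$ copies of $\P_S$ (this is the content of $[\widetilde S]\mapsto q[F]$).

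Next I would apply \eqref{chern} to each side. The terms $\hat\chi$, $\#\partial$, and $-2n_w$ in $\langle c_1(s_w(\psi_{j,i})),[\P_{\widetilde S}]\rangle$ split as ``$q$ times the corresponding term for $\P_S$'' plus a contribution localized to the torus summand, and that localized contribution is \emph{independent} of $i$ (the Euler measure, boundary length, and basepoint multiplicity of $\P_{\widetilde S}$ near the lens-space torus don't see which intersection point $\tilde x_i$ the triangle terminates at). Hence the only $i$-dependence is in the dual spider number $\sigma(\psi_{j,i},\P_{\widetilde S})$, exactly as in the proof of Lemma \ref{flemma}. I would then compute $\sigma(\psi_{j,i},\P_{\widetilde S}) - q\,\sigma(\psi_j,\P_S)$: writing out the four terms $n_{\psi(p)}(\P) + (\partial_{\alpha'}\P).a + (\partial_{\beta'}\P).b + (\partial_{\gamma'}\P).c$, the $\gamma$-term drops out since $\tilde\gamma_{g+1}\notin\partial\P_{\widetilde S}$, the $\beta$- and basepoint-terms contribute an $i$-independent shift (this is where the constant $q+r-1$ comes from, via the multiplicities $q$ and $r$ of $\tilde\beta_{g+1}$ and $\tilde\alpha_{g+1}$), and the $\alpha$-term contributes $-2i$ because moving the corner from $\tilde x_0$ to $\tilde x_i$ pushes the arc $a$ across the relevant strands of $\tilde\alpha_{g+1}$ exactly $i$ more times, each crossing weighted by the multiplicity of $\tilde\beta_{g+1}$ in a way that nets $-2$ per unit (mirroring the ``$-2q$'' computation in Lemma \ref{flemma}, here normalized by the order $q$). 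Assembling these gives precisely $\langle c_1(s_w(\psi_{j,i})),[\P_{\widetilde S}]\rangle - q\langle c_1(s_w(\psi_j)),[\P_S]\rangle = q+r-1-2i$.

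The main obstacle I anticipate is bookkeeping of signs and multiplicities in the dual spider number near $\tilde\beta_{g+1}$ — in particular getting the constant term to come out as $q+r-1$ rather than, say, $q-r-1$ or $q+r+1$, which requires care about orientation conventions for $\tilde\alpha_{g+1}$, $\tilde\beta_{g+1}$, the choice of basepoint $p$, and the direction in which the $\tilde x_i$ are indexed around the $(q,r)$-curve. I would anchor this by checking the formula in the base case $i=0$ (where $\psi_{j,0}$ is the most symmetric lift) and against the known integer-surgery normalization $\langle c_1(\s_s),[S_N]\rangle + N = 2s$ for $O_{q/r}$ trivial, i.e. $q=1$, $r=0$, where the claimed right-hand side reduces to $0$ as it must. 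Once the localized computation near the torus summand is correct, everything else is additivity and a direct appeal to \eqref{chern} and to the structure of $\P_{\widetilde S}$ established in the first step.
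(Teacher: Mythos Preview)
Your proposal is correct and follows essentially the same approach as the paper. The paper's own proof is a two-sentence sketch: it points to the coefficients of $\P_{\widetilde S}$ shown in Figure~\ref{ratsurgfig} and says the result is ``an exercise with the Chern class formula along the lines of those in Section~\ref{cobordismssec}.'' You have correctly identified that exercise---apply \eqref{chern} term by term, observe that $\hat\chi$, $\#\partial\P$, and $n_w$ are $\psi$-independent so the $i$-dependence lives entirely in the dual spider number, and compute the latter near the torus summand---and your outline of how the constant $q+r-1$ and the $-2i$ arise from the multiplicities of $\tilde\alpha_{g+1}$ and $\tilde\beta_{g+1}$ in $\partial\P_{\widetilde S}$ is the intended computation.
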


\begin{proof} The coefficients of the triply-periodic domains are indicated in Figure \ref{ratsurgfig}. From there the calculation is an exercise with the Chern class formula along the lines of those in  Section \ref{cobordismssec}.
\end{proof}

\begin{corollary}\label{largesurgerycor} Let $\tt_k\in \Spinc(Y_\tlambda(K))$ (for $\tlambda = \lambda_N = \lambda_{can} + N\mu$ sufficiently large) be the \spinc structure obtained as the restriction of $\s_k\in\Spinc(\tW_{\tlambda})$ characterized by 
\[
\langle c_1(\s_k),[S_{\tlambda}]\rangle +P+q -1 = 2k,
\]
where $[S_\tlambda]\in H_2(\tW_\tlambda;\zee)$ is a generator as previously, and $P = Nq-r$. Then there is an isomorphism
\[
\cfhat(Y_\tlambda(K),\tt_k) \cong A_s(K_0),
\]
where $s = \lfloor\frac{k}{q}\rfloor$.
\end{corollary}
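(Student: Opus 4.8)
The plan is to prove the corollary as a bookkeeping identity among \spinc structures, using three ingredients that are already available: the large surgery formula \eqref{largesurgiso} for $K_0\subset S^3$ together with the knowledge of which \spinc structure on the surgery cobordism realizes it; the K\"unneth (connected sum) behavior of the triangle maps, together with the triviality of the knot Floer theory of $O_{q/r}$; and the Chern-number identity of the Lemma preceding the statement. First I would recall that, for $N$ sufficiently large, the isomorphism $\cfhat(S^3_N(K_0),s)\cong A_s(K_0)$ of \eqref{largesurgiso} is realized by counting the small triangles $\psi_j$ in $(\Sigma,\aalpha,\ggamma,\bbeta,w,z)$ that induce the unique \spinc structure on $-W_N$ whose restriction to $S^3_N(K_0)$ is the class labelled $s$; by the third listed property of $A_s$ recalled in the Introduction this \spinc structure is characterized by
\[
\langle c_1(s_w(\psi_j)),[\P_S]\rangle + N = 2s,
\]
with $[\P_S]$ the triply-periodic domain representing the generator of $H_2(W_N;\zee)$, i.e.\ the capped-off Seifert surface.

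Next I would pass to $K=K_0\#O_{q/r}$. By the K\"unneth theorem for knot Floer homology and the (essentially trivial) knot Floer complex of $O_{q/r}$ --- this is the content of \cite{OSratsurg}, Corollary~5.3 and the proof of Theorem~1.1 --- the connected-sum triple $(\tSigma,\taalpha,\tggamma,\tbbeta,w,z)$ realizes, for each $i\in\{0,\dots,q-1\}$ and via the lifted small triangles $\psi_{j,i}$, an isomorphism $\cfhat(Y_{\tlambda}(K),\tt)\cong A_s(K_0)$, where $\tt$ is the restriction to $Y_{\tlambda}(K)$ of $s_w(\psi_{j,i})$ (a \spinc structure on $\tW_{\tlambda}$ that, for $N$ large, is independent of $j$). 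It then only remains to express this $\tt$ as the $\tt_k$ of the statement. Substituting the identity of the preceding Lemma, $\langle c_1(s_w(\psi_{j,i})),[\P_{\widetilde{S}}]\rangle = q\langle c_1(s_w(\psi_j)),[\P_S]\rangle + q + r - 1 - 2i$, together with $[\P_{\widetilde{S}}]=[S_{\tlambda}]$ and $P=Nq-r$, a short computation gives
\[
\langle c_1(s_w(\psi_{j,i})),[S_{\tlambda}]\rangle + P + q - 1 = 2\bigl(qs + q - 1 - i\bigr).
\]
Hence $\tt=\tt_k$ with $k = qs+q-1-i$, and since $0\le i\le q-1$ one has $\lfloor k/q\rfloor = s$; this yields $\cfhat(Y_{\tlambda}(K),\tt_k)\cong A_{\lfloor k/q\rfloor}(K_0)$, as claimed. (As $s$ ranges over $\zee$ and $i$ over $\{0,\dots,q-1\}$, $k$ exhausts $\zee$, so every \spinc structure of $Y_{\tlambda}(K)$ is accounted for.)

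The hard part is not really in the corollary itself but in its inputs: the Chern-number identity of the preceding Lemma is the substantive computation, and the connected-sum step requires the usual care that for $N$ large no spurious \spinc structures (differing by multiples of the Heegaard surface class) contribute and that the $O_{q/r}$ factor contributes only a shift in \spinc label. Within the present argument the only real pitfall is the additive normalization: the triple diagrams describe the \emph{reversed} cobordisms $-W_N$ and $-\tW_{\tlambda}$, so I would be careful to identify $[\P_{\widetilde{S}}]$ with $[S_{\tlambda}]$ and $[\P_S]$ with $[\widetilde{S}]$ with consistent signs, and to check that it is the shift $+P+q-1$ --- not $+P$ or $+P-q+1$ --- that matches these conventions, since that is exactly what forces the floor function $\lfloor k/q\rfloor$ to appear.
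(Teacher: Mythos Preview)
Your proposal is correct and follows essentially the same route as the paper: you invoke the large-surgery isomorphism for $K_0$ via small triangles $\psi_j$, lift to $\psi_{j,i}$ using K\"unneth and the triviality of $O_{q/r}$, then substitute the Chern-number identity from the preceding Lemma to obtain $k = qs + q - 1 - i$ (which is exactly the paper's $k = (s+1)q - (i+1)$) and conclude $\lfloor k/q\rfloor = s$. Your added observation that $k$ exhausts $\zee$ and your cautionary remarks about sign conventions are sound and go slightly beyond what the paper records, but the core argument is the same.
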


\begin{proof} Let $\tx$ be a generator in $(\Sigma,\taalpha,\tggamma,w)$. We can write $\tx = \x_j\times \tilde{x}_i$ for some $i$, where $\x_j\in \Ta\cap\Tg$ is a generator for large integral surgery on $K_0\subset S^3$. The identification \eqref{largesurgiso} is realized by counting triangles in $(\Sigma,\aalpha,\ggamma,\bbeta, w,z)$ that are \spinc equivalent to the evident small triangle $\psi_j$ connecting $\x_j$ to its corresponding generator $\y\in\Ta\cap\Tb$. 

Likewise, if $\tt = \s_w(\tx)$ then the corresponding large-surgery identification $\cfhat(Y_{\tlambda}, \tt) \cong A_{{\xi}}(K)$ (for some relative \spinc structure ${\xi}\in \ul{\Spinc}(-L(q,r), K)$) involves a count of triangles \spinc equivalent to the small triangle $\psi_{j,i}$. By the K\"unneth theorem for the knot Floer chain complex, we have an identification $A_{{\xi}}(K) \cong A_s(K_0)$. Now $s$ is characterized by  the equation $\langle c_1(\s_w(\psi_j)), [S_N]\rangle + N = 2s$. From the lemma,
\begin{eqnarray*}
\langle c_1(\s_w(\psi_{j,i})), [S_{\tilde\lambda}]\rangle &=& q\langle c_1(\s_w(\psi_j)), [S_N]\rangle + q + r - 1-2i\\
&=& 2sq - Nq +q+ r -1 -2i\\
&=& 2((s+1)q -(i+1)) -P-q+1
\end{eqnarray*}
showing $\langle c_1(\s_w(\psi_{j,i})), [S_{\tilde\lambda}]\rangle +P+q-1 = 2k$, where $k = (s+1)q - (i+1)$. Since $0\leq i\leq q-1$, it follows that $s = \lfloor\frac{k}{q}\rfloor$, while $\tt$ is the restriction of $\s_w(\psi_{j,i})$.

\end{proof}

\begin{corollary}\label{inclusioncor} If $\XX_{p/q}(K_0)$ is the rational surgery mapping cone for $K_0\subset S^3$, then the map in homology induced by the inclusion 
\[
(k,{B})\to \XX_{p/q}(K_0)\]
 corresponds, under the identification $H_*(\XX_{p/q}(K_0))\cong \hfhat(S^3_{p/q}(K_0))$, to the map in Floer homology induced by the 2-handle cobordism $W : -L(q,r)\to S^3_{p/q}(K_0)$ equipped with the \spinc structure $\s_k$ characterized by 
\[
\langle c_1(\s_k), [\widetilde{S}]\rangle + p + q - 1 = 2k.
\]
\end{corollary}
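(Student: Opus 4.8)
The plan is to combine the large-surgery identification of Corollary~\ref{largesurgerycor} with the description of the rational surgery mapping cone as the iterated mapping cone of surgery exact triangles, tracking \spinc structures via the label $\langle c_1(\s),[F]\rangle + [F]\cdot[F]$ throughout. First I would recall the construction of $\XX_{p/q}(K_0)$ in \cite{OSratsurg}: one takes a framing $\tlambda = \lambda_{can} + N\mu$ on $K = K_0\# O_{q/r}$ with $N$ large, forms the surgery exact triangle relating $Y_{\tlambda}(K)$, $Y_{\tlambda + m'\mu}(K)$ for suitable large $m'$, and the twisted group $\underline{HF}$ of $-L(q,r)$, and identifies $\cfhat(S^3_{p/q}(K_0))$ with the mapping cone of $f_2$ as in the surgery exact triangle subsection. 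Under this identification, the copies of $B = \cfhat(S^3)$ in $\XX_{p/q}(K_0)$ are exactly the summands $\underline{CF}^+(S^3)\otimes T^s$ (or rather their $\cfhat$ analogues) appearing in the twisted coefficient group, and the inclusion $(k,B)\hookrightarrow \XX_{p/q}(K_0)$ is, by the discussion following Lemma~\ref{YmtoYlemma}, precisely the map $f_3 \circ \eta_s$ of Lemma~\ref{YtoYlambdalemma} for the appropriate $s$ — that is, a sum of maps induced by \spinc structures on the surgery cobordism $\tW_{\tlambda}: {-L(q,r)}\to Y_{\tlambda}(K)$.

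Next I would use the self-conjugation-insensitive labeling: the analysis in the surgery exact triangle subsection shows that the coefficient of $T^s$ in the mapping cone is the target of maps induced by \spinc structures on $\tW_{\tlambda}$ (equivalently on $-\tW_{\tlambda + m'\mu}$, after turning around) carrying a fixed label $\langle c_1(\s),[F_{\tlambda}]\rangle + [F_{\tlambda}]\cdot[F_{\tlambda}]$ modulo $m'$, and that this label equals $f(\x) + 2(m_p(\partial\phi_{\x\x_0}) - s)$ by \eqref{labeleqn}. The point is that for $N$ large and the \spinc structure on $Y_{\tlambda}(K)$ fixed, exactly one \spinc structure on $\tW_{\tlambda}$ contributes; by Corollary~\ref{largesurgerycor} this is $\s_k$ with $\langle c_1(\s_k),[S_{\tlambda}]\rangle + P + q - 1 = 2k$ and $s = \lfloor k/q\rfloor$, so the inclusion of $(k,B)$ is the map induced by $\s_k$ on $\tW_{\tlambda}$. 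Then I would run the naturality of the mapping cone construction under enlarging the framing: there is a commuting diagram relating $\XX_{p/q}(K_0)$ (built from framing $\tlambda + m\mu$ with the ``small'' integer $m$ with $p = mq - r$) to the large-framing model, and the surgery cobordism $\tW_{\tlambda + m\mu}$ factors through $\tW_{\tlambda}$; since $f$ is framing-independent (Lemma~\ref{flemma}) the label is preserved, so the inclusion of $(k,B)$ into the genuine cone $\XX_{p/q}(K_0)$ is induced by the \spinc structure $\s_k$ on $W = \tW_{\tlambda + m\mu}: {-L(q,r)}\to S^3_{p/q}(K_0)$ with $\langle c_1(\s_k),[\widetilde S]\rangle + p + q - 1 = 2k$, noting $[\widetilde S] = [S_{\tlambda+m\mu}]$ and that the shift from $P = Nq - r$ to $p = mq - r$ in the normalization is exactly accounted for by the framing change via Lemma~\ref{selfintlemma} together with \eqref{labeleqn}.

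The main obstacle I expect is bookkeeping the \spinc identifications consistently across three different descriptions — the twisted group $\underline{HF}(-L(q,r),\F[C_{m'}])$ with its $\zee/m'\zee$-grading, the large-framing surgery cobordism $\tW_{\tlambda}$, and the genuine rational surgery cobordism $W = \tW_{\tlambda+m\mu}$ — and in particular verifying that the integer $k$ that indexes a summand of $\XX_{p/q}(K_0)$ matches the $k$ in Corollary~\ref{largesurgerycor} on the nose (not just modulo $m'$) once one passes from the large framing to the actual surgery. This is where Lemma~\ref{flemma} does the real work: because $f(\x)$ depends only on $\x$ and not on the framing, the labels computed in the large-framing triple diagram transport verbatim to the small-framing one, collapsing the modular ambiguity. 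The remaining steps — that the inclusion $(k,B)\hookrightarrow\XX_{p/q}(K_0)$ is $f_3\circ\eta_s$, and that $f_3\circ\eta_s$ is the cobordism map for $\s_k$ — are formal consequences of Lemmas~\ref{YtoYlambdalemma} and~\ref{YmtoYlemma} applied to the mapping-cone-of-$f_2$ presentation, so I would present those briefly and concentrate the exposition on the \spinc label translation.
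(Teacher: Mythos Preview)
Your plan is essentially the paper's own argument: identify the inclusion of $(k,B)$ with $f_3\circ\eta_s$ via Lemma~\ref{YtoYlambdalemma}, use Corollary~\ref{largesurgerycor} to pin down the \spinc structure on the large-framing cobordism, and then invoke the label-matching at the end of the surgery-triangle subsection (ultimately Lemma~\ref{flemma}) to transport the Chern-number normalization from framing $\lambda_N$ to $\lambda_m$. Two small points to clean up: in the triangle the \emph{small} framing $\lambda_m$ gives $S^3_{p/q}(K_0)$ while the large framing sits at the other vertex (you have these reversed in your first paragraph), and the two surgery cobordisms $\tW_{\lambda_m}$ and $\tW_{\lambda_N}$ do not literally factor through one another---the link between them is purely the algebraic equality of labels, exactly as you say in the next clause.
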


\begin{proof} It suffices, by the truncation arguments in \cite{OSratsurg}, to prove a corresponding statement in the context of the surgery exact triangle. Indeed, the cone complex $\XX_{p/q}(K_0)$ is the limit of the cone of $f_2: \cfhat(Y_{\lambda + N\mu})\to \cfhat(Y,\F[C_N])$ as $N$ increases. We saw previously that for a rationally null-homologous knot $K\subset Y = -L(q,r)$, the inclusion of ${B} = \cfhat(Y)$ as the coefficient of $T^k$ corresponds to a map induced by $W$ with a certain \spinc structure. The mapping cone is set up in such a way that the coefficient of $T^k$ becomes (after the truncation argument) the target of the map ${v}_s: (k,A_s(K_0))\to (k, {B})$ where $s = \lfloor\frac{k}{q}\rfloor$. This map, in turn, is induced by the surgery cobordism: we have a commutative diagram (combining the proofs of \cite[Theorem 4.1]{OSratsurg} and Corollary \ref{largesurgerycor}):
\[
\begin{diagram} 
\cfhat(S^3_{p/q}(K_0), \tt_k) & \rTo^{F_{-W_{\lambda_N, \s_k}} }& \cfhat(-L(q,r), \s_k) \\
\dTo^\Psi &&\dTo^=\\
A_s(K_0) & \rTo^{{v_s}}& {B}.
\end{diagram}
\]
Therefore, from the remarks at the end of the previous section, we know the \spinc structure inducing the inclusion map and the one inducing ${v}_s$ have the same ``label'' (the value of the Chern number plus the square of the generator in relative homology). Let us write $\tW_{\lambda_m}: -L(q,r)\to S^3_{p/q}(K)$ for the surgery cobordism, where $p = mq-r$, while $-\tW_{\lambda_N}:S^3_{P/q}\to -L(q,r)$ denotes the large-surgery cobordism turned around (with $P =Nq-r$). If $\s_a\in\Spinc(\tW_{\lambda_m})$ has
\[
\langle c_1(\s_a), [F_{\lambda_m}]\rangle + [F_{\lambda_m}]^2 = a,
\]
we can write $[F_{\lambda_m}]^2 = m-\frac{r}{q}$ by Lemma \ref{selfintlemma}. Then the map induced by $\s_a$ corresponds to the inclusion of ${B}$ in the mapping cone as the target of the map induced by the \spinc structure $\tilde{\s}_a$ having the same label, i.e., having
\[
\langle c_1(\tilde{\s}_a), [F_{\lambda_N}]\rangle + [F_{\lambda_N}]^2 = a.
\]
Since $[F_{\lambda_N}]^2 = N- \frac{r}{q}$, we can say
\[
qa = \langle c_1(\tilde{\s}_a), [S_{\lambda_N}]\rangle + Nq-r = \langle c_1(\s_a), [S_{\lambda_m}]\rangle + mq -r.
\]
Hence, 
\[
\langle c_1(\tilde{\s}_a), [S_{\lambda_N}]\rangle + P + q -1 = \langle c_1(\s_a), [S_{\lambda_m}]\rangle + p+q-1,
\]
so that the convention determining the index $k$ in the mapping cone $\XX_{p/q}(K_0)$ agrees with the identification between the large-surgery complexes $A_{\xi}(K)$ and $A_s(K_0)$ obtained in the previous corollary.

\end{proof}

\bibliography{naturality}

\end{document}